\documentclass[12pt]{amsart}
\usepackage[margin=0.5in]{geometry}
\usepackage{graphicx,epstopdf,color}
\usepackage{amssymb,amscd,amsthm,amsxtra}

\usepackage[font=footnotesize]{caption}

\usepackage{mathrsfs}
\renewcommand{\mathcal}{\mathscr}

\numberwithin{equation}{section}

\newtheorem{theorem}{Theorem}[section]
\newtheorem{lemma}[theorem]{Lemma}
\newtheorem{proposition}[theorem]{Proposition}
\newtheorem{remark}[theorem]{Remark}
\newtheorem{corollary}[theorem]{Corollary}

\newcommand{\R}{\mathbb R}
\newcommand{\N}{\mathbb N}

\newcommand{\Z}{\mathbb Z}
\newcommand{\C}{\mathbb C}

\renewcommand{\le}{\leqslant}

\renewcommand{\ge}{\geqslant}
\renewcommand{\epsilon}{\varepsilon}
\newcommand{\eps}{\varepsilon}

\newcommand{\per}{\,{\rm Per}_s}

\begin{document}

\author[S. Dipierro]{Serena Dipierro}
\address[Serena Dipierro]{School of Mathematics and Statistics,
University of Melbourne,
813 Swanston Street, Parkville VIC 3010, Australia,
and
Fakult\"at f\"ur Mathematik,
Institut f\"ur Analysis und Numerik,
Otto-von-Guericke-Universit\"at Magdeburg,
Universit\"atsplatz 2, 39106 Magdeburg, Germany}
\email{serena.dipierro@ed.ac.uk}

\author[O. Savin]{Ovidiu Savin}
\address[Ovidiu Savin]{Department of Mathematics, Columbia University,
2990 Broadway,
New York NY 10027, USA}
\email{savin@math.columbia.edu}

\author[E. Valdinoci]{Enrico Valdinoci}
\address[Enrico Valdinoci]{School of Mathematics and Statistics,
University of Melbourne,
813 Swanston Street, Parkville VIC 3010, Australia,
Weierstra{\ss} Institut f\"ur Angewandte
Analysis und Stochastik, Hausvogteiplatz 5/7, 10117 Berlin, Germany,
Dipartimento di Matematica, Universit\`a degli studi di Milano,
Via Saldini 50, 20133 Milan, Italy, and
Istituto di Matematica Applicata e Tecnologie Informatiche,
Consiglio Nazionale delle Ricerche,
Via Ferrata 1, 27100 Pavia, Italy.}
\email{enrico@math.utexas.edu}

\title{Boundary behavior of nonlocal minimal surfaces}
\thanks{The first author has been supported by EPSRC grant  EP/K024566/1
``Monotonicity formula methods for nonlinear PDEs'',
ERPem
``PECRE Postdoctoral and Early Career Researcher Exchanges'' and Alexander
von Humboldt Foundation.
The second author has been supported by
NSF grant DMS-1200701.
The third author has been supported by ERC grant 277749 ``EPSILON Elliptic
Pde's and Symmetry of Interfaces and Layers for Odd Nonlinearities''
and PRIN grant 201274FYK7
``Aspetti variazionali e
perturbativi nei problemi differenziali nonlineari''.}

\begin{abstract}
We consider the behavior of the nonlocal minimal surfaces
in the vicinity of the boundary. By a series of detailed
examples, we show that nonlocal minimal surfaces
may stick at the boundary of the domain, even 
when the domain is smooth and convex. This is a purely
nonlocal phenomenon, and it is in sharp contrast with
the boundary properties of the classical minimal surfaces.

In particular, we show stickiness phenomena
to half-balls when the datum outside the ball is
a small half-ring and to the side of a two-dimensional box when
the oscillation between the datum on the right and on the left
is large enough.

When the fractional parameter is small,
the sticking effects may become more and more evident.
Moreover, we show that lines in the plane are unstable
at the boundary: namely, small compactly supported perturbations
of lines cause the minimizers in a slab to stick at the boundary,
by a quantity that is proportional to a power of the perturbation.

In all the examples, we present concrete estimates
on the stickiness phenomena.
Also, we construct a family of compactly supported barriers
which can have independent interest.
\end{abstract}

\subjclass[2010]{49Q05, 35R11, 53A10}
\keywords{Nonlocal minimal surfaces, boundary regularity, barriers.}
\maketitle

\section{Introduction}

It is well known (see e.g. \cite{HS, DS}) that the classical minimal surfaces
do not stick at the boundary. Namely, if~$\Omega$ is a convex
domain and~$E$ is a set that minimizes the perimeter among
its competitors in~$\Omega$, then~$\partial E$ is transverse to~$\partial\Omega$
at their intersection points.

In this paper we show that the situation for the nonlocal
minimal surfaces is completely different.
Indeed, we prove that nonlocal interactions can favor stickiness at
the boundary for minimizers of a fractional perimeter.

The mathematical framework in which we work 
was introduced in~\cite{CRS} and is the following. Given~$s\in(0,1/2)$
and an open set~$\Omega\subseteq\R^n$,
we define the $s$-perimeter of a set~$E\subseteq\R^n$ in~$\Omega$ as
$$ \per(E,\Omega) := L(E\cap\Omega, E^c)+L(\Omega\setminus E,
E\setminus\Omega),$$
where~$E^c:=\R^n\setminus E$ and, for any
disjoint sets~$F$ and~$G$, we use the notation
$$ L(F,G):=\iint_{F\times G} \frac{dx\,dy}{|x-y|^{n+2s}}.$$
We say that~$E$ is $s$-minimal in~$\Omega$ if~$\per(E,\Omega)<+\infty$
and~$\per(E,\Omega)\le \per(F,\Omega)$ among all the sets~$F$
which coincide with~$E$ outside~$\Omega$.

With a slight abuse of language, when~$\Omega$
is unbounded, we say that~$E$ is $s$-minimal
in~$\Omega$ if it is $s$-minimal
in any bounded open subsets of~$\Omega$ (for a more
precise distinction between $s$-minimal sets
and locally $s$-minimal sets see e.g.~\cite{luca}).
\medskip

Problems related to the $s$-perimeter naturally arise
in several fields, such as the motion by nonlocal mean curvature
and the nonlocal Allen-Cahn equation, see e.g.~\cite{SOU, GAMMA}.
Also, the $s$-perimeter can be seen as a fractional interpolation
between the classical perimeter (corresponding to the case~$s\to1/2$)
and the Lebesgue measure (corresponding to the case~$s\to0$),
see e.g.~\cite{MAZYA, BREZIS, CV, MARTIN, DFPV}.
\medskip

The field of nonlocal minimal surfaces
is rich of open problems and surprising examples (see e.g.~\cite{LAWSON})
and the interior regularity theory of the nonlocal
minimal surfaces has been established in the plane and when the
fractional parameter is close enough to~$1/2$
(see~\cite{CV-REG, SV-REG}), but, as far as we know,
the boundary behavior of the nonlocal minimal surfaces
has not been studied till now.
\medskip

We show in this paper that the boundary datum
is not, in general, attained continuously. Indeed,
nonlocal minimal surfaces may stick at the boundary
and then detach from the boundary in a~$C^{1,\frac{1}{2}+s}$-fashion.
We will give concrete examples of this stickiness phenomenon
with explicit (and somehow optimal) estimates.
In particular, we will present stickiness phenomena to half-balls,
when the domain is a ball and the datum is a small half-ring,
and to the sides of a two-dimensional box, when the datum is
small on one side and large on the other side.
\medskip

Moreover, we study how small perturbations with compact support may affect
the boundary behavior of a given nonlocal minimal surface.
Quite surprisingly, these perturbations may produce stickiness effects
even in the case of flat objects and in low dimension.
For instance, adding a small perturbation to a half-space
in the plane produces a sticking effect,
with the size of the sticked portion proportional to a power
of the size of the perturbation.
We now present and discuss these results in further detail.

\subsection*{Stickiness to half-balls}

For any~$\delta>0$, we let
\begin{equation}\label{Kdelta}
K_\delta := \big( B_{1+\delta}\setminus B_1\big)\cap \{x_n<0\}.\end{equation}
We define~$E_\delta$ to be the set minimizing~$\per(E,B_1)$
among all the sets~$E$ such that~$E\setminus B_1=K_\delta$.

\begin{figure}
    \centering
    \includegraphics[height=5.9cm]{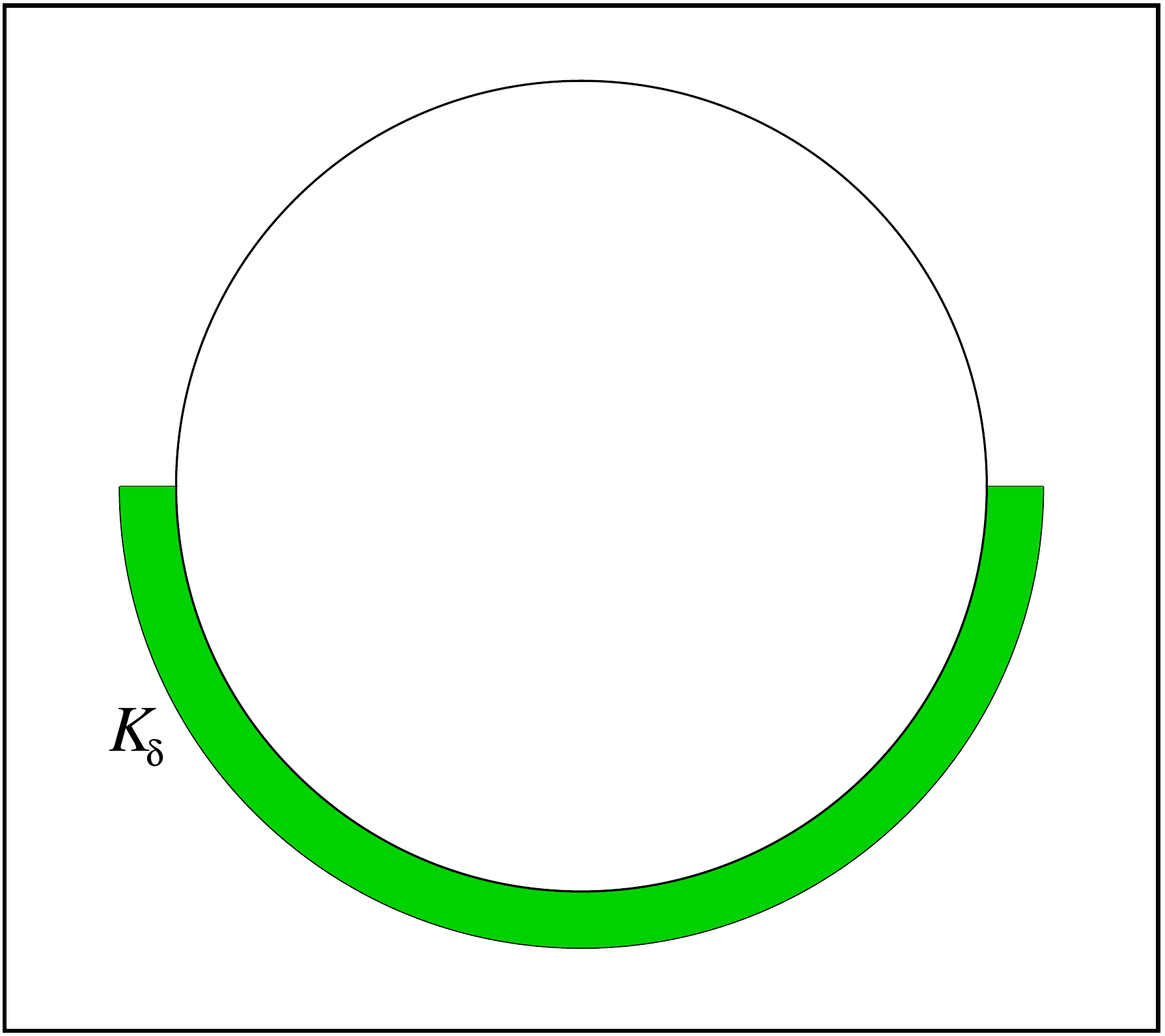}
    \caption{The stickiness property in Theorem~\ref{89=THM}.}
    \label{NESS}
\end{figure}

Notice that, in the local setting, the minimizer of the perimeter functional
that takes~$K_\delta$ as boundary value at~$\partial B_1$
is the flat set~$B_1\cap \{x_n<0\}$ (independently of~$\delta$).
The picture changes dramatically in the nonlocal framework,
since in this case the nonlocal minimizers stick at~$\partial B_1$
if~$\delta$ is suitably small, see Figure~\ref{NESS}. The formal statement of this
feature is the following:

\begin{theorem}\label{89=THM}
There exists~$\delta_o>0$,
depending on $s$ and $n$, such that
for any~$\delta\in(0,\delta_o]$ we have that
$$ E_\delta=K_\delta.$$
\end{theorem}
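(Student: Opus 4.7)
The plan is to reduce the claim to an algebraic identity for the $s$-perimeter and then to rule out any putative interior mass by a comparison/barrier argument. Writing any admissible competitor as $F=K_\delta\cup A$ with $A=F\cap B_1$, a direct expansion of the definition (using that $K_\delta\subseteq B_1^c$ and $A\subseteq B_1$ are disjoint) yields the identity
\[
\per(F,B_1)-\per(K_\delta,B_1)\;=\;L(A,A^c)-2\,L(A,K_\delta)\;=\;L\bigl(A,A^c\setminus K_\delta\bigr)-L(A,K_\delta).
\]
Hence the theorem is equivalent to proving the strict inequality $L(A,A^c\setminus K_\delta)>L(A,K_\delta)$ for every measurable $A\subseteq B_1$ with $|A|>0$, once $\delta\le\delta_o(n,s)$.

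Before addressing this, I would discard the upper half. Since the half-space $H:=\{x_n<0\}$ is globally $s$-minimal and $K_\delta\subseteq H\setminus B_1$, the standard crossing argument (testing $E_\delta$ against the competitor $E_\delta\cap H$ and $H$ against the competitor $E_\delta\cup H$, then using the submodular identity for $\per$ that turns the sum of the two minimality inequalities into equality) forces $L(E_\delta\setminus H,H\setminus E_\delta)=0$, hence $E_\delta\subseteq H$ up to a null set. It thus suffices to analyze $A\subseteq B_1\cap H$, which is useful because in the target inequality the set $A^c\setminus K_\delta$ then contains the whole upper half-space, giving a robust positive contribution of order $\int_A|x_n|^{-2s}\,dx$, whereas $L(A,K_\delta)$ is only an interaction with a thin annular ring of width $\delta$.

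The core of the argument is the construction of a compactly supported outer barrier $V_\delta\subseteq B_1^c$ with $K_\delta\subseteq V_\delta$ and with nonpositive nonlocal mean curvature at every point of $\partial V_\delta$ lying on $\partial B_1$. Given such $V_\delta$, the maximum principle for $s$-minimal sets confines $E_\delta$ to $V_\delta$, and since $V_\delta\cap B_1=\emptyset$ this yields $E_\delta\cap B_1=\emptyset$, i.e.\ $E_\delta=K_\delta$. Concretely I would search for $V_\delta$ as a mild thickening and rounding of $K_\delta$ still contained in the lower-half shell $B_{1+C\delta}\setminus B_1$; the curvature check then splits the principal-value integral into a positive bulk contribution of order one coming from $B_1$ and from the upper complement, and a negative contribution from $V_\delta$ itself which is only $O(\delta^{1-2s})$ because of the thinness of $V_\delta$. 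Balancing these two terms determines $\delta_o$.

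The main obstacle I expect is precisely the construction and curvature estimate for $V_\delta$: one needs an explicit open set of thickness comparable to $\delta$, strictly contained in $B_1^c$, still enveloping $K_\delta$, and with controlled nonlocal mean curvature at points that can be arbitrarily close to $\partial B_1\cap H$. This is presumably the family of compactly supported barriers that the paper advertises as being of independent interest. A backup route by contradiction (take $\delta_k\downarrow 0$ with $E_{\delta_k}\ne K_{\delta_k}$, extract an $L^1_{\text{loc}}$ limit by the compactness of $s$-minimizers, identify it as the empty set since the exterior datum vanishes in the limit, and apply the Caffarelli--Roquejoffre--Savin density estimates) is also available, but in the final step of excluding thin boundary-hugging configurations of $E_{\delta_k}\cap B_1$ it appears to reduce to essentially the same barrier construction.
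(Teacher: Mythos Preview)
Your algebraic identity and the crossing argument reducing to $E_\delta\cap B_1\subseteq\{x_n<0\}$ are both correct and useful observations. The gap is in the barrier step. A set $V_\delta\subseteq B_1^c$ cannot confine $E_\delta$ inside $B_1$ by the maximum principle: the comparison argument requires a touching point between $\partial V_\delta$ and $\partial E_\delta$ lying in the domain $B_1$, where the Euler--Lagrange inequality for $E_\delta$ is available, and a barrier living entirely in $B_1^c$ never produces such a point. Thickening $K_\delta$ outward also changes the exterior datum, so $V_\delta$ is not an admissible competitor either. As you suspect, the compactness backup only yields $E_\delta\cap B_{1-\eps}=\varnothing$ and leaves exactly the boundary layer unresolved. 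Finally, the ``compactly supported barriers'' advertised in the introduction belong to the proof of Theorem~\ref{UNS} in the plane and play no role here.

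The paper's argument is much more elementary and works from the inside out rather than the outside in. First, the perimeter bound $\per(K_\delta,B_1)\to 0$ together with the clean-ball density estimate of \cite{CRS} gives $E_\delta\cap B_{1-\eps}=\varnothing$ for $\delta$ small (this is your compactness backup, made quantitative). Then one slides concentric balls $B_\rho$ outward from the origin until the first contact with $\partial E_\delta$ at some $q\in\partial B_\rho$ with $\rho\in[1-\eps,1)$. The viscosity Euler--Lagrange inequality at $q$ says the integral of $\chi_{E_\delta^c}-\chi_{E_\delta}$ against $|q-y|^{-n-2s}$ is nonpositive. But $E_\delta\subseteq B_{1+\delta}\setminus B_\rho$, a thin annulus, while $B_\rho\subseteq E_\delta^c$ contributes a fixed positive amount (at least $\int_{B_{1/2}}|q-y|^{-n-2s}\,dy\ge\tilde c$). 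The only subtle point is bounding the annulus contribution near $q$: one reflects $B_\rho$ across $q$ to the tangent ball $B_\rho(2q)$, cancels the symmetric parts, and is left with the thin crescent between the two tangent spheres inside $B_\lambda(q)$, whose contribution is $O(\lambda^{1-2s})$ by a standard estimate. Choosing $\lambda\sim(\eps+\delta)^{1/(2(n+2s))}$ makes both error terms $o(1)$ and yields the contradiction $\tilde c\le o(1)$.
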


\subsection*{Stickiness to the sides of a box}

Given a large~$M>1$ we
consider the $s$-minimal set~$E_M$ in~$(-1,1)\times\R$
with datum outside~$(-1,1)\times\R$ given by
the jump
\begin{equation}\label{jump}
\begin{split}
& J_M:=J^-_M \cup J^+_M,\\
{\mbox{where }}\quad
& J^-_M:= (-\infty,-1]\times (-\infty,-M)
\\{\mbox{and }}\quad&J^+_M:= [1,+\infty)\times (-\infty,M).
\end{split}\end{equation}
We prove that, if~$M$ is large enough,
the minimal set~$E_M$ sticks at the boundary
(see Figure~\ref{REG:2}).
Moreover, the stickiness region gets close to the origin,
up to a power of~$M$.
The precise result is the following:

\begin{figure}
    \centering
    \includegraphics[width=9cm]{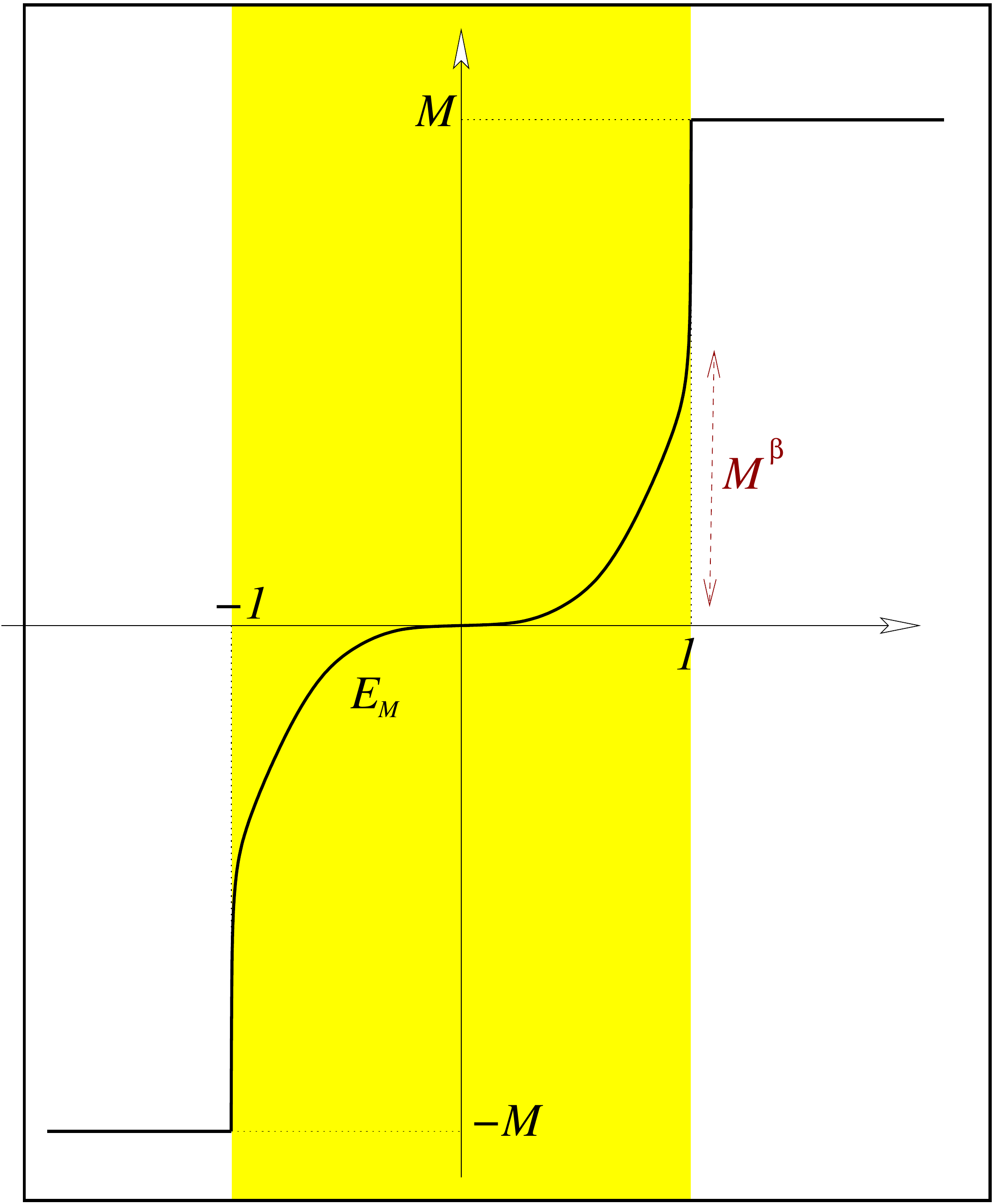}
    \caption{The stickiness property in Theorem~\ref{CYL-THEOR},
with~$\beta:={\frac{1+2s}{2+2s}}$.}
    \label{REG:2}
\end{figure}

\begin{theorem}\label{CYL-THEOR}
There exist~$M_o>0$ and~$C_o\ge C_o'>0$, depending on~$s$, such that
if~$M\ge M_o$ then
\begin{eqnarray*}
&& [-1,1)\times [C_o M^{\frac{1+2s}{2+2s}},\,M]\subseteq E_M^c 
\\{\mbox{and }}&& (-1,1]\times [-M,\,-C_o M^{\frac{1+2s}{2+2s}}]\subseteq E_M.
\end{eqnarray*}
Also, the exponent~${\frac{1+2s}{2+2s}}$ above is optimal.
For instance, if
either~$[-1,1)\times [b M^{\frac{1+2s}{2+2s}},\,M]\subseteq E_M^c$
or~$(-1,1]\times [-M,\,-C_o M^{\frac{1+2s}{2+2s}}]\subseteq E_M$
for some~$b\ge0$, then~$b\ge C_o'$.
\end{theorem}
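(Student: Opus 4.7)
The statement has two components: the containment, asserting stickiness on both walls down to height of order $M^\beta$ with $\beta := (1+2s)/(2+2s)$, and the sharpness of~$\beta$. I address them in turn, using symmetry to halve the work. Existence of $E_M$ follows from the direct method in every bounded subset of the strip. The datum satisfies $\iota(J_M) = J_M^c$ modulo null sets outside $\Omega$, where $\iota(x,y):=(-x,-y)$, so we may select a minimizer with $\iota(E_M)=E_M^c$. Under $\iota$ the two containments in the theorem, and the two sharpness assertions, are exchanged: it suffices to treat the upper-right versions only. Comparison with the $s$-minimal half-planes $\{y<M\}$ and $\{y<-M\}$, whose external data are ordered with respect to~$J_M$, produces the crude envelope $\{y<-M\}\cap\Omega\subseteq E_M\cap\Omega\subseteq\{y<M\}\cap\Omega$, confining the interesting portion of $E_M$ to the vertical range $[-M,M]$.

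For the stickiness containment I would use a sliding-barrier argument based on the compactly supported barriers constructed earlier in the paper (mentioned in the abstract). Let $\Phi$ be such a barrier, scaled to characteristic size $R\asymp M^\beta$ and translated to sit inside the strip just below the level $y=M$ near the right wall. By construction, $\partial\Phi$ carries an $s$-mean curvature of the correct sign, making $\Phi$ an $s$-supersolution for $E_M^c$, and its exterior data fits inside $J_M^c$. Starting with $\Phi$ placed far above $\Omega$, so that $\Phi\subseteq E_M^c$ holds trivially by the envelope, one slides~$\Phi$ downward (or rescales it) until first contact with $\partial E_M$; the strong maximum principle for $s$-minimizers forbids such contact in the interior of~$\Omega$, so $\Phi$ must cover the target rectangle. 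Choosing $R\asymp M^\beta$ to be the largest scale at which $\Phi$ still fits inside the strip while respecting the jump datum, and such that $\Phi$ covers $[-1,1)\times[C_oM^\beta,M]$, gives the claim. The scale $R\asymp M^\beta$ is dictated by balancing the barrier's curvature scale against the amplitude-$M$ pull exerted by $J_M^+$ along the right wall.

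For the sharpness, suppose $[-1,1)\times[bM^\beta,M]\subseteq E_M^c$ for some small $b>0$. Set $h := M-bM^\beta\asymp M$ and build the competitor
\[
F := E_M \cup Q, \qquad Q := (1-\delta,1)\times(bM^\beta,M),
\]
with $\delta>0$ to be optimized. Minimality of $E_M$ forces
\[
0 \;\leq\; \per(F,\Omega)-\per(E_M,\Omega) \;=\; L(Q,E_M^c\setminus Q) - L(Q,E_M).
\]
The dominant gain is $L(Q,J_M^+)$, the cancellation of the wall-jump between $Q\subset E_M^c$ and the adjacent strip of $J_M^+\subset E_M$ over the height $h\asymp M$; the dominant loss is produced by the new interface bounding $Q$ on its left and upper sides together with the self-interaction of $Q$. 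Computing these two terms as functions of $(\delta,h)$ using the kernel $|x-y|^{-2-2s}$ and optimizing over $\delta$ shows that the competitor strictly improves $\per$ unless $h\lesssim M^\beta$; that is, unless $b\geq C_o'$ for a specific constant $C_o'>0$.

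The main obstacle is the quantitative matching of scales that must take place in both parts of the argument: the barrier calibration in the stickiness step and the nonlocal $L$-interaction estimates in the competitor step. Both must reproduce the same critical exponent $\beta=(1+2s)/(2+2s)$ coming from the interplay between the singularity of the kernel, the order-one width of the strip, and the amplitude~$M$ of the jump datum, which is precisely what pins down the power.
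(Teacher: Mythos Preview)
Your plan diverges from the paper's argument in both halves, and in each half the quantitative step that actually pins down $\beta=(1+2s)/(2+2s)$ is missing.

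\textbf{Stickiness.} The compactly supported barriers you invoke are built later in the paper for a \emph{different} result (Theorem~\ref{UNS}); they live in a horizontal slab, have vertical stickiness at $x_1=\pm1$, and a bump of small height in the middle. It is not clear how to rescale such an object to ``size $R\asymp M^\beta$'' so that it fits the width-$2$ strip and respects the datum~$J_M$, and you never carry out the balance that would force $R\asymp M^\beta$. The paper is far more elementary here: it slides balls. First (Proposition~\ref{CYL}) a ball of radius $\eps_o M$ at heights near~$M$ gives stickiness down to $c_oM$; then (Proposition~\ref{CYL-RAFF}), with that in hand, it slides the intersection of a very flat ball of radius $M^{2\beta}$ with a horizontal strip of width $\sim M^\beta$. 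At an interior contact point~$q$ the Euler--Lagrange inequality is split into a near part in $B_{M^\beta}(q)$, bounded via the barrier's curvature by $CM^{-\beta(1+2s)}$, and a far part dominated by the jump regions at height $x_2\in(M,2q_2+M)$, of size $\gtrsim q_2 M^{-1-2s}\gtrsim C_o M^{\beta-1-2s}$. The algebraic identity $\beta-1-2s=-\beta(1+2s)$ is exactly what makes these two scales match and hence selects~$\beta$; this is the ``matching of scales'' you flag as the obstacle, and it must be done explicitly, not absorbed into a black-box barrier.

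\textbf{Sharpness.} Your competitor $F=E_M\cup Q$ with $Q=(1-\delta,1)\times(bM^\beta,M)$ is natural, but the leading singular terms in $L(Q,E_M^c\setminus Q)-L(Q,E_M)$ cancel: immediately to the right of~$Q$ sits $J_M^+\subset E_M$, immediately to the left sits (by hypothesis) the stuck region in~$E_M^c$, and both give contributions of order $h\,\delta^{1-2s}$ with opposite signs. Extracting the subleading term and showing it forces $h\lesssim M^\beta$ is the entire content of the sharpness, and you have not done it. The paper instead works pointwise (Proposition~\ref{PR-3.2}): assuming $p=(1,bM^\beta)\in\partial E_M$, it uses the odd symmetry $E_M\mapsto -E_M$ to locate a definite chunk of $E_M$ near height $-bM^\beta$, evaluates the viscosity Euler--Lagrange inequality at~$p$, and obtains $(2bM^\beta+1)^{1+2s}\gtrsim M^{1+2s-\beta}$, which via $1-\beta/(1+2s)=\beta$ gives $b\gtrsim 1$. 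Your symmetry reduction and the crude envelope $\{x_2<-M\}\subseteq E_M\subseteq\{x_2<M\}$ are fine, but both main steps of your proposal are programmatic rather than proofs.
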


\subsection*{Stickiness as~$s\to0^+$}

The stickiness properties of nonlocal minimal surfaces
are a purely nonlocal phenomenon and they become more evident
for small values of~$s$. To provide a confirming example,
we consider the boundary value given by
a sector in~$\R^2$ outside~$B_1$, i.e. we define
\begin{equation}\label{SECTOR-3456fgg}
\Sigma := \{ (x,y)\in\R^2\setminus B_1 {\mbox{ s.t. }}
x>0 {\mbox{ and }} y>0\}.\end{equation}
We show that as~$s\to0^+$ the $s$-minimal set in~$B_1$
with datum~$\Sigma$ sticks to~$\Sigma$,
and, more precisely, this stickiness already occurs
for a small~$s_o>0$ (see Figure~\ref{SIMP--1}).

\begin{figure}
    \centering
    \includegraphics[height=5.9cm]{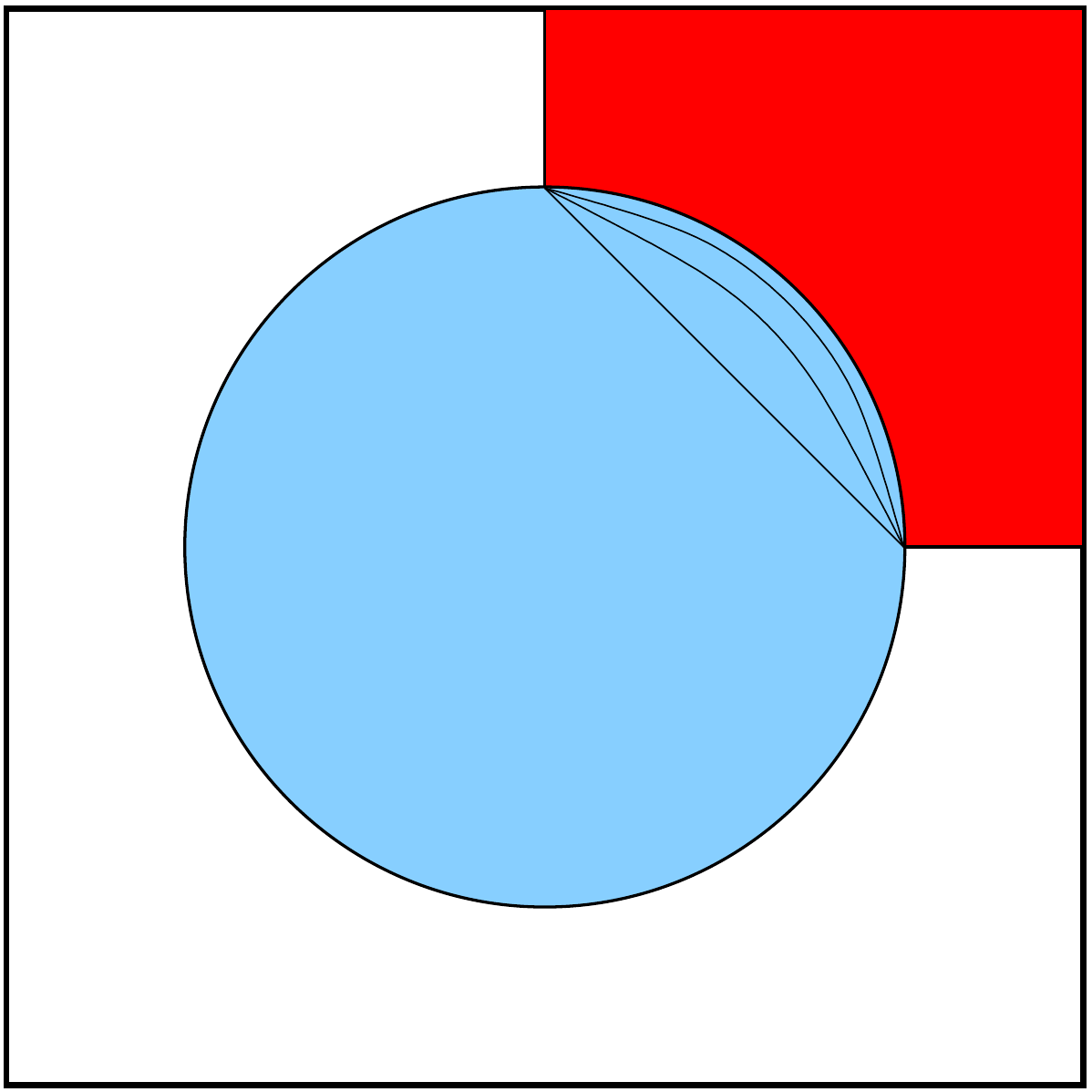}
    \includegraphics[height=5.9cm]{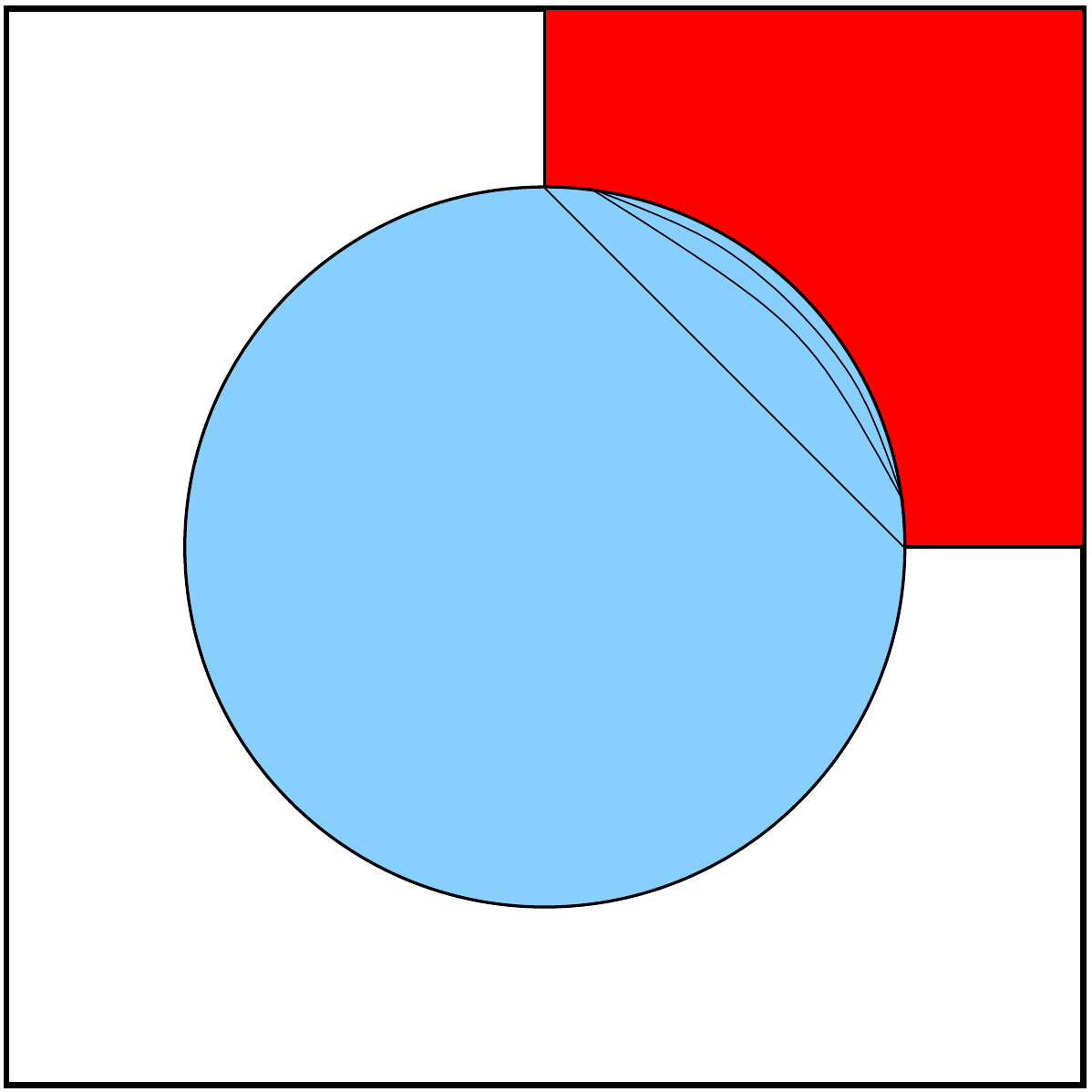}
    \caption{The stickiness property in Theorem~\ref{SECTOR}.}
    \label{SIMP--1}
\end{figure}

\begin{theorem}\label{SECTOR}
Let~$E_s$ be
the $s$-minimizer of~$\per(E,B_1)$
among all the sets~$E$ such that~$E\setminus B_1=\Sigma$.

Then, there exists~$s_o>0$ such that for any~$s\in(0,s_o]$
we have that~$E_s=\Sigma$.
\end{theorem}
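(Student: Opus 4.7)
The plan is to show that any nontrivial modification of $\Sigma$ inside $B_1$ strictly increases the $s$-perimeter, for all sufficiently small $s > 0$. Every competitor has the form $E = \Sigma \cup G$ with $G \subseteq B_1$, and an elementary expansion yields
$$
\per(\Sigma \cup G, B_1) - \per(\Sigma, B_1) = L(G, B_1\setminus G) + \int_G \bigl[ L_{B_1^c \setminus \Sigma}(x) - L_\Sigma(x) \bigr] dx,
$$
where $L_A(x) := \int_A |x-y|^{-2-2s} dy$. Since $L(G, B_1 \setminus G) \geq 0$, it is enough to prove the pointwise inequality $\Phi_s(x) := L_{B_1^c \setminus \Sigma}(x) - L_\Sigma(x) > 0$ for every $x \in B_1$ and every $s \in (0, s_o]$.

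The guiding heuristic is the small-$s$ asymptotic (see e.g.~\cite{DFPV}), which in the plane reads $2s \int_A |x-y|^{-(2+2s)} dy \to 2\pi \mathcal{D}(A)$, with $\mathcal{D}(A)$ the density of $A$ at infinity. Since $\mathcal{D}(\Sigma) = 1/4$ and $\mathcal{D}(B_1^c \setminus \Sigma) = 3/4$, this yields $2s\, \Phi_s(x) \to \pi$ pointwise as $s \to 0^+$. To make the inequality effective uniformly in $x \in B_1$, I would split into two regions. For $x \in B_1$ with $x_1 \leq 0$ or $x_2 \leq 0$, a \emph{reflection argument} produces $\Phi_s(x) > 0$ for every $s \in (0, 1/2)$: for instance, if $x_1 \leq 0$, reflecting $\Sigma$ across the $y$-axis gives a subset of $B_1^c \setminus \Sigma$, and the identity $|x-(-z_1, z_2)|^2 - |x-z|^2 = 4 x_1 z_1 \leq 0$ (using $z_1 > 0$ on $\Sigma$) places the reflected set strictly closer to $x$. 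For $x$ in the open first quadrant of $B_1$ with $|x| \leq 1 - \delta$, a quantitative version of the asymptotic, uniform on compact subsets of $B_1$, gives $\Phi_s(x) \geq \pi/(4s)$ for $s$ small (depending on $\delta$).

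The main obstacle is the thin first-quadrant boundary strip $\{x \in B_1 : x_1, x_2 > 0,\ 1 - \delta < |x| < 1\}$. There, the near-field contribution to $L_\Sigma(x)$ can grow logarithmically in $(1-|x|)^{-1}$, because $\Sigma$ subtends nearly a half-plane at such points $x$, while $B_1^c \setminus \Sigma$ sits at distance of order $1$ from $x$. To handle this region, I would decompose $\Phi_s(x)$ into near-field (within distance $1/\sqrt{2}$ of $x$) and far-field (beyond $1/\sqrt 2$) contributions, exploit that the far-field surplus is of order $\pi/(2s)$, and quantify the near-field deficit as at most $C\, |\log(1-|x|)|$; this closes the estimate provided $|\log(1-|x|)| \ll 1/s$, which can be arranged by taking $\delta$ of order $e^{-1/(4s)}$. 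For points in an even thinner residual layer closer to $\partial B_1$, I would invoke the density estimates for nonlocal minimal surfaces (see e.g.~\cite{CRS}) to rule out that $\partial E_s$ can reach into such an exponentially small set. The hardest step is precisely this last regime, where the pointwise energy comparison degenerates and one must combine nonlocal estimates with the detailed geometry of $\Sigma$.
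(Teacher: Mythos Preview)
Your approach is quite different from the paper's and has a genuine gap in the first-quadrant boundary layer.

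The paper does not compare energies. It works instead with the Euler--Lagrange equation and two sliding arguments: first slide the half-planes $\{x_1+x_2\le t\}$ to obtain $E_s\cap B_1\subseteq\{x_1+x_2>1\}$, hence $B_{\sqrt2/2}\subseteq E_s^c$; then enlarge concentric balls $B_r$ from inside until they touch $\partial E_s$ at some interior point $q\in B_1$. At $q$ the viscosity Euler--Lagrange inequality holds, and the curvature integral splits into a near-field piece (bounded uniformly in $s$, since $q$ sits at distance $\ge\sqrt2/2$ from $\partial B_1$) and a far-field piece of size $c/s$ coming from the fact that three quadrants lie in $E_s^c$ and only one in $E_s$. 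For $s$ small this contradicts the Euler--Lagrange inequality. The boundary of $B_1$ never enters the estimates, because the contact point is produced from the inside.

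In your scheme, by contrast, the pointwise inequality $\Phi_s>0$ genuinely fails near $\partial B_1$ in the open first quadrant: for $x$ at angle $\pi/4$ with $|x|=1-\epsilon$ one has $L_\Sigma(x)\sim\frac{\pi}{2s}\,\epsilon^{-2s}\to+\infty$ as $\epsilon\to0$ while $L_{B_1^c\setminus\Sigma}(x)$ stays bounded, so $\Phi_s(x)\to-\infty$. Your proposed fix via density estimates is not sufficient as stated. The clean-ball condition does not exclude $E_s\cap B_1$ from being a thin annular sliver $G=\{1-\epsilon<|x|<1,\ x_1>0,\ x_2>0\}$; and for such a competitor the two dominant terms in your energy identity cancel to leading order, since each $x\in G$ sees $B_1\setminus G$ as (locally) a half-space at distance $\epsilon-(1-|x|)$ on one side and $\Sigma$ as a half-space at distance $1-|x|$ on the other, so that
\[
\int_0^\epsilon\bigl[(\epsilon-u)^{-2s}-u^{-2s}\bigr]\,du=0.
\]
The sign of the increment is therefore decided by sub-leading contributions that your outline does not control. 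Moreover, even where $\Phi_s>0$ does hold, this only rules out competitors $G$ supported in that good region; it does not by itself prevent the minimizer from having mass simultaneously in the good region and in the bad boundary strip. The paper's barrier argument sidesteps all of this by producing an \emph{interior} contact point and reading off a pointwise curvature inequality there.
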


\subsection*{Instability of the flat fractional minimal surfaces}

Rather surprisingly, one of our results states that
the flat lines are ``unstable'' fractional minimal surfaces,
in the sense that an arbitrarily small and compactly supported
perturbation can cause a boundary stickiness phenomenon.
We are also able to give a quantification
of the size of the stickiness in terms of the size
of the perturbation: namely the size of the stickiness
is bounded from below by the size of the perturbation
to the power~$\frac{2+\epsilon_0}{1-2s}$,
for any fixed~$\epsilon_0$ arbitrarily small (see Figure~\ref{CAMB}).
We observe that this power tends to~$+\infty$
as~$s\to1/2$, which is consistent with the fact that
classical minimal surfaces do not stick.
The precise result that we obtain is the following:

\begin{figure}
    \centering
    \includegraphics[height=5.9cm]{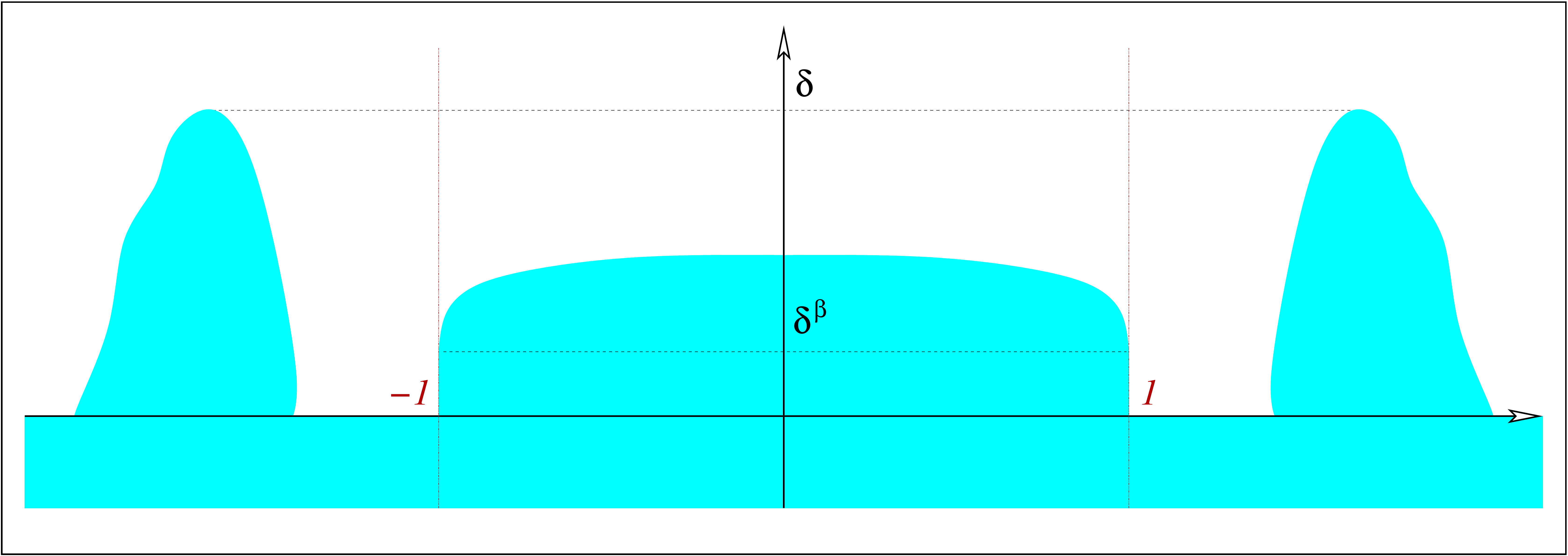}
    \caption{The stickiness/instability property in Theorem~\ref{UNS},
with~$\beta:=\frac{2+\epsilon_0}{1-2s}$.}
    \label{CAMB}
\end{figure}

\begin{theorem}\label{UNS}
Fix~$\epsilon_0>0$ arbitrarily small.
Then, there exists~$\delta_0>0$, possibly depending on~$\epsilon_0$,
such that for any~$\delta\in(0,\delta_0]$ the following statement holds true.

Assume that~$F\supset H\cup F_-\cup F_+$, where~$H:=\R\times(-\infty,0)$,
$F_-:=
(-3,-2)\times [0,\delta)$ and
$F_+:= (2,3)\times [0,\delta)$. Let~$E$ be the 
$s$-minimal set in~$(-1,1)\times\R$ among all the sets that coincide
with~$F$ outside~$(-1,1)\times\R$. Then
$$ E\supseteq (-1,1)\times (-\infty, \delta^{\frac{2+\epsilon_0}{1-2s}} ].$$
\end{theorem}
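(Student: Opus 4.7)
The plan is to prove the inclusion by constructing a subsolution barrier $V$ for the minimizer $E$ and invoking the nonlocal comparison principle for $s$-minimal sets.

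\textbf{Reduction.} First, by monotonicity of the $s$-minimizer with respect to its exterior datum (a pointwise larger exterior datum produces a larger minimizer, since $\per(\cdot, \Omega)$ is submodular in the relevant sense), it suffices to consider the extremal case $F = H \cup F_- \cup F_+$. Any $F$ satisfying the hypothesis is at least as large, so the corresponding minimizer is at least as large.

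\textbf{Barrier design.} I would seek an auxiliary set $V \subset \R^2$ such that: (a) $V$ coincides with $H \cup F_- \cup F_+$ outside $(-1,1)\times\R$; (b) $V$ contains the target region $(-1,1)\times(-\infty, \delta^{\beta}]$, with $\beta := (2+\eps_0)/(1-2s)$; (c) $V$ is an $s$-subsolution, in the sense that no local modification $V \mapsto V \cup G$ (with $G \Subset (-1,1)\times\R$) strictly decreases $\per(\cdot,(-1,1)\times\R)$. The naive choice, namely the rectangle $V_{\rm rect} := [(-1,1) \times (-\infty, \delta^\beta]] \cup H \cup F_- \cup F_+$, has the right shape, but a direct computation of its nonlocal mean curvature at the top $\{y = \delta^\beta\}$ shows it fails the subsolution condition near the lateral edges $|x|=1$: the ``missing" thin strip $\{|x|>1,\,0\le y\le \delta^\beta\}$ sits arbitrarily close to such $p$ and makes the resulting $H_s$ integral dominate in the supersolution direction, overruling the pull from $F_\pm$. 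The fix is to use a family $\{V_\eta\}_{\eta>0}$ of modified barriers whose top profile $u_\eta$ equals $\delta^\beta$ on $|x|\le 1-\eta$ and tapers smoothly to $0$ on $|x|\in[1-\eta,1)$, built by gluing in the compactly supported barriers constructed earlier in the paper so as to cancel the singular lateral contribution.

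\textbf{Curvature bookkeeping.} For a fixed $\eta>0$ and $p$ on the top of $V_\eta$ inside the slab, I would split the nonlocal mean curvature into four pieces: (i) the half-plane $H$ below $p$, whose contribution cancels by reflection symmetry at height $0$ and leaves only a higher-order correction when $p$ is slightly lifted; (ii) the bumps $F_\pm$ at distance of order $1$, whose combined contribution is of order $\delta$ and acts in the subsolution direction (this is the ``push" that forces stickiness); (iii) the missing strip outside the slab at heights $[0,u_\eta(x_0)]$, whose contribution is of order $u_\eta(x_0) \le \delta^\beta$ in the center of the slab and is controlled in the transition region by the chosen taper; (iv) the self-interaction of the interior profile, of order $(\delta^\beta)^{1-2s} = \delta^{2+\eps_0}$. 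The calibration $\beta = (2+\eps_0)/(1-2s)$ is precisely what makes the self-interaction $\delta^{2+\eps_0}$ negligible relative to the bump contribution $\delta$, leaving a comfortable margin to absorb the transition-zone errors produced by the taper. Summing the four pieces with the correct signs confirms the subsolution condition uniformly on $\partial V_\eta \cap (-1,1)\times\R$.

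\textbf{Comparison and passage to the limit.} The nonlocal comparison principle for $s$-minimal sets then yields $V_\eta \subseteq E$ for every admissible $\eta$, and letting $\eta\to 0^+$ the union $\bigcup_{\eta>0} V_\eta$ exhausts $(-1,1)\times(-\infty,\delta^\beta]$, proving the claim.

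\textbf{Main obstacle.} The crucial technical step is the lateral-edge correction in the barrier construction. A flat rectangular $V$ fails the subsolution condition near $|x|=1$ because of the ``corner" between the top of the rectangle and the exterior datum $H$, and the modification using the taper and the compactly supported barriers must be done delicately enough to make the subsolution property hold uniformly along the whole top profile. The appearance of the slack $\eps_0$ in the exponent is an unavoidable consequence of this: it quantifies the extra room required to absorb the transition-region curvature, and a smaller exponent would violate the subsolution condition in the tapered zone.
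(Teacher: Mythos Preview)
Your overall strategy---construct a subsolution barrier and compare---matches the paper's, and you correctly identify the lateral edges $\{x_1=\pm 1\}$ as the crux. But there are two genuine gaps.

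\textbf{The barrier shape and the exponent.} Your proposed barrier $V_\eta$ has a flat top at height $\delta^\beta$ that tapers to~$0$ near $|x_1|=1$. This is \emph{not} what the paper's compactly supported barrier (Proposition~\ref{LK:FARA}) looks like: that barrier has a \emph{vertical jump} of size $\delta^\beta$ at $|x_1|=1$ and a profile governed by the power $v(x_1)\sim x_1^{\frac12+s+\epsilon_0}$ near the edge, not a smooth taper to zero. Your ``curvature bookkeeping'' also misidentifies the source of the exponent $\beta=\frac{2+\epsilon_0}{1-2s}$: it does not arise from balancing a self-interaction of order $(\delta^\beta)^{1-2s}$ against the bump contribution~$\delta$; rather, it comes from the scaling in Corollary~\ref{COR 2.2-BAR}, where a barrier of width $L_m$ has maximal height $\sim L_m^{\frac12+s+\epsilon_0}$ (this power being dictated by the $\gamma_0$-harmonic profile $x_+^{\gamma_0}$, see Section~\ref{sec:GROW:R}) and vertical stickiness $\sim L_m^{-1}\log^{-1}L_m$. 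Your heuristic scalings do not reproduce this, and a flat-top-plus-taper barrier built by ad hoc gluing would not satisfy the subsolution inequality in the tapered zone without essentially redoing the delicate constructions of Sections~\ref{SEC:BAR:PTWL}--\ref{SEC:CMP:SUPP}.

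\textbf{Boundary contact in the comparison step.} You invoke a ``nonlocal comparison principle'' as a black box to conclude $V_\eta\subseteq E$. The paper's proof (Section~\ref{INST:SEC}) shows why this cannot be done so cheaply: one slides the barrier $E(t)=E_{\delta/C}-te_2$ from below and argues by contradiction that the first contact occurs at $t=0$. Interior contact in $(-1,1)\times\R$ is ruled out by the viscosity Euler--Lagrange inequality against~\eqref{PLFG-SYM:FIN}, but contact at $\{x_1=\pm1\}$---which \emph{does} occur, since the barrier has a vertical segment there---requires a separate argument. The paper handles this by sliding small balls from the barrier toward $\partial E$ along horizontal lines at heights $x_{o,2}^\pm+h_k$, producing interior touching points $x_k\to x_o^\pm$ along which one can pass to the limit in the Euler--Lagrange inequality (via Lemma~3.4 of~\cite{nostro}). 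Your proposal omits this step entirely; without it, the sliding argument does not close, because the subsolution inequality~\eqref{PLFG-SYM:FIN} is only asserted for $|x_1|<1$, not at the endpoints.
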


The proof of Theorem~\ref{UNS} is rather delicate
and it is based on the construction of suitable auxiliary barriers,
which we believe are interesting in themselves.
These barriers are used to detach a portion of the set
in a neighborhood of the origin and their construction
relies on some compensations of nonlocal integral terms.
As a matter of fact, the compactly supported barriers are
obtained by glueing other auxiliary barriers with polynomial growth
(the latter barriers are somehow ``self-sustaining solutions''
and can be seen as the geometric counterparts of
the $s$-harmonic function~$x_+^s$).
\medskip

Though quite surprising at a first glance, the sticking
effects that we present in this paper
have some (at least vague)
heuristic explanations.
Indeed, first of all,
the contribution to the fractional mean curvature
which comes from far may bend a nonlocal minimal surface
towards the boundary of the domain: then, the points in
the vicinity of the domain may end up receiving
a contribution which is incompatible with the vanishing
of the fractional mean curvature, due to some transverse
intersection between the datum and the domain itself,
thus forcing these points to stick at the boundary.

Another heuristic explanation of the stickiness phenomenon
comes from the different fractional scalings
that the problem exhibits at different scales.
On the one hand, vanishing of
the fractional mean curvature corresponds to a $s$-harmonicity
property 
(i.e. a harmonicity with respect to the fractional operator~$(-\Delta)^s$)
for the characteristic function of the $s$-minimal set,
with~$s\in(0,1/2)$.
If the boundary of the set is the graph
of a smooth function~$u$, this gives an equation for~$u$
whose linearization corresponds to~$(-\Delta)^{\frac{1}{2}+s}$,
which would correspond, roughly speaking, to a regularity theory
of order~$C^{\frac{1}{2}+s}$ at the boundary. On the other hand,
nonlocal minimal surfaces detach from free boundaries in
a~$C^{1,\frac{1}{2}+s}$-fashion (see~\cite{PRO}), which suggests that the linearized
equation of the graph is not a good approximation for the boundary
behavior.
\medskip

The rest of the paper is organized as follows.
In Section~\ref{S:HALF-BALL}, we discuss the case of the stickiness to
a half-ball and we prove Theorem~\ref{89=THM}.
Then, Section~\ref{SEC:BOX} considers the case
of a two-dimensional box with high oscillating datum,
providing the proof of Theorem~\ref{CYL-THEOR}.
The asymptotics as~$s\to0$ is presented in Section~\ref{SEC:to0}.

The second part of the paper is devoted to the proof of
Theorem~\ref{UNS}. In particular, Sections~\ref{SEC:BAR:PTWL}, \ref{sec:GROW:R}
and~\ref{SEC:CMP:SUPP} are devoted to the construction
of the auxiliary barriers. More precisely,
in Section~\ref{SEC:BAR:PTWL} we construct barriers with a linear
growth, by superposing straight lines with slowly varying slopes;
then, in Section~\ref{sec:GROW:R}, we 
glue the barrier with linear growth with a power-like function
(this is needed to obtain sharper estimates on the size of the glueing)
and in Section~\ref{SEC:CMP:SUPP} we adapt this construction
to build barriers that are compactly supported.

This will allow us to prove Theorem~\ref{UNS}
in Section~\ref{INST:SEC}. The paper ends with an appendix
that contains a simple, but general, symmetry property,
and an alternative proof of an integral identity.

\section{Stickiness to half-balls}\label{S:HALF-BALL}

This section is devoted to the analysis of the stickiness
phenomena to the half-ball, caused by a small half-ring
as external datum. The main goal of this part is to prove
Theorem~\ref{89=THM}. For this, we 
take~$K_\delta$ as in~\eqref{Kdelta}, i.e.
$$ K_\delta := \big( B_{1+\delta}\setminus B_1\big)\cap \{x_n<0\}$$
and~$E_\delta$ to be the set minimizing~$\per(E,B_1)$
among all the sets~$E$ such that~$E\setminus B_1=K_\delta$.

We make some auxiliary observations.
First of all, we check that the~$s$-perimeter of~$K_\delta$
(and then of the minimizer)
must be small if so is~$\delta$:

\begin{lemma}\label{89=IO}
For any~$\eps>0$ there exists~$\delta_\eps>0$ such that
for any~$\delta\in(0,\delta_\eps]$ we have that
$$ \per ( K_\delta,B_1)\le \eps.$$
\end{lemma}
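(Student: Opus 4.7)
The plan is to reduce $\per(K_\delta, B_1)$ to a single double integral over $B_1 \times K_\delta$, and then control the apparent singularity as $y$ approaches $\partial B_1$. First I would observe that, since $K_\delta \subseteq B_{1+\delta}\setminus B_1$, we have $K_\delta \cap B_1 = \emptyset$. Plugging this into the definition
$$ \per(K_\delta, B_1) \;=\; L(K_\delta \cap B_1,\, K_\delta^c) \,+\, L(B_1 \setminus K_\delta,\, K_\delta \setminus B_1), $$
the first summand vanishes, while the second reduces to
$$ L(B_1, K_\delta) \;=\; \iint_{B_1 \times K_\delta} \frac{dx\, dy}{|x-y|^{n+2s}}. $$

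Next, for a fixed $y \in K_\delta$, let $r := |y|-1 \in (0,\delta)$. Since $B_1$ is entirely contained in the complement of the ball of radius $r$ centered at $y$, and (for $\delta$ small) contained in $B_3(y)$, translating and passing to polar coordinates yields
$$ \int_{B_1} \frac{dx}{|x-y|^{n+2s}} \;\le\; \int_{B_3 \setminus B_r} \frac{dz}{|z|^{n+2s}} \;=\; C_n \int_r^3 \rho^{-1-2s}\, d\rho \;\le\; \frac{C_n}{2s}\, r^{-2s}. $$
Integrating this bound over $K_\delta$ in polar coordinates (using that $K_\delta$ spans half of the unit sphere and that the radial variable lies in $(1, 1+\delta)$), one obtains
$$ \per(K_\delta, B_1) \;\le\; C \int_1^{1+\delta} (\rho-1)^{-2s}\, \rho^{n-1}\, d\rho \;\le\; C' \int_0^\delta t^{-2s}\, dt \;=\; \frac{C'\, \delta^{1-2s}}{1-2s}, $$
for constants $C, C'$ depending only on $n$ and $s$. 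Since $s \in (0, 1/2)$, the exponent $1-2s$ is strictly positive, so the right-hand side tends to $0$ as $\delta \to 0^+$. Choosing $\delta_\eps$ so that $C' \delta_\eps^{1-2s}/(1-2s) \le \eps$ yields the conclusion.

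The computation presents no serious obstacle; the only substantive point is to verify that the integral of $(|y|-1)^{-2s}$ over the thin ring $K_\delta$ remains finite and vanishes with $\delta$, which is guaranteed precisely by the restriction $s < 1/2$ built into the very notion of $s$-perimeter.
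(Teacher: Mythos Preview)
Your proof is correct. The reduction to $L(B_1,K_\delta)$ is the same as in the paper, but from there the two arguments diverge.

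The paper argues indirectly: it notes that $\iint_{B_1\times(B_2\setminus B_1)}\frac{dx\,dy}{|x-y|^{n+2s}}$ is finite (citing an external lemma from~\cite{CV}) and equals the limit as $\delta\to0^+$ of $\iint_{B_1\times(B_2\setminus B_{1+\delta})}$, so the remainder $\iint_{B_1\times(B_{1+\delta}\setminus B_1)}$ is forced to vanish in the limit. This gives the result without any explicit rate.

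Your approach is a direct computation: you bound the inner integral by $C\,(|y|-1)^{-2s}$ via the inclusion $B_1\subseteq B_3(y)\setminus B_{|y|-1}(y)$, and then integrate over the thin half-ring to obtain the quantitative bound $\per(K_\delta,B_1)\le C'\,\delta^{1-2s}/(1-2s)$. This is more elementary (no external reference needed), self-contained, and yields an explicit rate of decay in~$\delta$, at the cost of a short extra calculation. Either argument is perfectly adequate here, but yours extracts more information.
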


\begin{proof} We have
$$ \per ( K_\delta,B_1) = L(B_1,K_\delta)\le
\iint_{B_1\times \big( B_{1+\delta}\setminus B_1\big)}
\frac{dx\,dy}{|x-y|^{n+2s}}.$$
Now we observe that
\begin{equation}\label{CVFI}
(0,+\infty)\ni \iint_{B_1\times \big( B_{2}\setminus B_1\big)}
\frac{dx\,dy}{|x-y|^{n+2s}}
= \lim_{\delta\to0^+}
\iint_{B_1\times \big( B_{2}\setminus B_{1+\delta}\big)}
\frac{dx\,dy}{|x-y|^{n+2s}}.\end{equation}
Indeed, the first integral in~\eqref{CVFI} is finite,
see for instance Lemma~11 in~\cite{CV} (applied here with~$\eps:=1$,
$\Omega:=B_2$ and~$F:=B_1$).
As a consequence of~\eqref{CVFI}, for any~$\eps>0$ 
there exists~$\delta_\eps>0$ such that
for any~$\delta\in(0,\delta_\eps]$ we have
$$ \left| \iint_{B_1\times \big( B_{2}\setminus B_{1}\big)}
\frac{dx\,dy}{|x-y|^{n+2s}}-
\iint_{B_1\times \big( B_{2}\setminus B_{1+\delta}\big)}
\frac{dx\,dy}{|x-y|^{n+2s}}\right|\le\eps,$$
which gives the desired result.
\end{proof}

Next result proves that the boundary of the minimal set~$E_\delta$
can only lie in the neighborhood of~$\partial B_1$, if~$\delta$
is small enough. More precisely:

\begin{lemma}\label{89=QW}
For any~$\eps\in(0,1)$ there exists~$\delta_\eps>0$ such that
for any~$\delta\in(0,\delta_\eps]$ we have that
$$ (\partial E_\delta)\cap B_{1-\eps} =\varnothing. $$
\end{lemma}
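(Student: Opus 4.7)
The strategy is to combine the smallness of the total $s$-perimeter coming from Lemma~\ref{89=IO} with the uniform density estimates for $s$-minimal sets, to rule out boundary points well inside $B_1$.

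First, since $E_\delta$ minimizes $\per(\,\cdot\,,B_1)$ among sets agreeing with $K_\delta$ outside $B_1$, comparing $E_\delta$ with the competitor $K_\delta$ itself (which is admissible because $K_\delta\subseteq\R^n\setminus B_1$, so $K_\delta\cap B_1=\varnothing$) yields
$$ \per(E_\delta,B_1)\le \per(K_\delta,B_1),$$
and the right-hand side is $\le\eta$ for any prescribed $\eta>0$, provided $\delta\le \delta_\eta$, by Lemma~\ref{89=IO}.

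Next, I would argue by contradiction: suppose there exists $p\in(\partial E_\delta)\cap B_{1-\eps}$. Set $r:=\eps/2$, so that $B_r(p)\subseteq B_{1-\eps/2}\subset B_1$. By the uniform density estimates for $s$-minimal sets (Theorem~4.1 in~\cite{CRS}), applied to $E_\delta$ at the boundary point $p$ with radius $r$, one obtains constants $c=c(n,s)>0$, independent of $\delta$, such that
$$ |E_\delta\cap B_r(p)|\ge c\, r^n \qquad\text{and}\qquad |E_\delta^c\cap B_r(p)|\ge c\, r^n.$$
Since for all $x,y\in B_r(p)$ one has $|x-y|\le 2r$, this implies the lower bound
$$ L\bigl(E_\delta\cap B_r(p),\, E_\delta^c\cap B_r(p)\bigr)\ge \frac{c^{2}\, r^{2n}}{(2r)^{n+2s}}=c_{\ast}\,r^{n-2s},$$
for some $c_{\ast}=c_{\ast}(n,s)>0$.

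Finally, since $B_r(p)\subset B_1$, the monotonicity of $L$ under set inclusion gives
$$ L\bigl(E_\delta\cap B_r(p),\, E_\delta^c\cap B_r(p)\bigr)\le L\bigl(E_\delta\cap B_1,\, E_\delta^c\bigr)\le \per(E_\delta,B_1)\le \eta.$$
Choosing $\eta:=\frac{1}{2}c_{\ast}(\eps/2)^{n-2s}$ and then $\delta_\eps:=\delta_\eta$ from Lemma~\ref{89=IO} yields a contradiction, proving the claim.

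The main obstacle is not really an obstacle but a bookkeeping point: one has to verify that the density estimates of~\cite{CRS} are available with constants depending only on $n$ and $s$ (not on the external data $K_\delta$), and that the interaction $L(E_\delta\cap B_r(p),E_\delta^c\cap B_r(p))$ is controlled by $\per(E_\delta,B_1)$; both facts follow directly from the definition of $\per$ and the fact that $B_r(p)$ lies well inside $B_1$.
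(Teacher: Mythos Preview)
Your proof is correct and follows essentially the same route as the paper's: argue by contradiction, use the interior regularity theory of \cite{CRS} at a hypothetical boundary point $p\in B_{1-\eps}$ to force $\per(E_\delta,B_1)\ge c_\ast\,\eps^{n-2s}$, and contradict the smallness from Lemma~\ref{89=IO}. The only cosmetic differences are that the paper invokes the Clean Ball Condition (Corollary~4.3 in~\cite{CRS}) rather than the density estimates (Theorem~4.1), and it chooses the small parameter in Lemma~\ref{89=IO} as $\eps^n$ (then reducing to small $\eps$), whereas you pick $\eta=\tfrac12 c_\ast(\eps/2)^{n-2s}$ directly; your bookkeeping is slightly cleaner but the idea is identical.
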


\begin{proof} 
We observe that it is enough to prove the desired claim for small~$\eps$
(since this would imply the claim for bigger~$\eps$).
The proof is by contradiction.
Suppose that there exists~$p\in (\partial E_\delta)\cap B_{1-\eps}$.
Then~$B_{\eps/2}(p)\subset B_1$ and so, by the Clean Ball Condition
(see Corollary 4.3 in~\cite{CRS}), there exist~$p_1$, $p_2\in B_1$ such that
$$ B_{c\eps}(p_1)\subset E\cap B_{\eps/2}(p)\qquad
{\mbox{ and }}\qquad
B_{c\eps}(p_2)\subset E^c\cap B_{\eps/2}(p),$$
for a suitable constant~$c>0$.
In particular, both~$B_{c\eps}(p_1)$ and~$B_{c\eps}(p_2)$ lie inside~$B_1$,
and if~$x\in B_{c\eps}(p_1)$ and~$y\in B_{c\eps}(p_2)$ then~$|x-y|\le\eps$.
As a consequence
$$ \per (E_\delta,B_1)\ge L\big( B_{c\eps}(p_1),B_{c\eps}(p_2)\big)
\ge \frac{|B_{c\eps}(p_1)|\,|B_{c\eps}(p_2)|}{\eps^{n+2s}} = c_o \eps^{n-2s},$$
for some~$c_o>0$.
On the other hand, by Lemma~\ref{89=IO} (used here with~$\eps^n$
in the place of~$\eps$),
we have that~$\per (E_\delta,B_1)\le
\per ( K_\delta,B_1)\le\eps^n$ provided that~$\delta$
is suitably small with respect to~$\eps$. As a consequence,
we obtain that~$\eps^n \ge c_o \eps^{n-2s}$,
which is a contradiction if~$\eps$ is small enough.
\end{proof}

The statement of Lemma~\ref{89=QW} can be better specified,
as follows:

\begin{corollary}\label{89=QW-2}
For any~$\eps\in(0,1)$ there exists~$\delta_\eps>0$ such that
for any~$\delta\in(0,\delta_\eps]$ we have that
$$ E_\delta\cap B_{1-\eps} =\varnothing. $$
\end{corollary}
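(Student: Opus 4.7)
The plan is to upgrade Lemma~\ref{89=QW} from a statement about $\partial E_\delta$ to a statement about $E_\delta$ itself, using a connectedness argument to reduce the possibilities, and then ruling out the remaining bad case by another $s$-perimeter lower bound of the same flavor as the one used inside Lemma~\ref{89=QW}.

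First I would apply Lemma~\ref{89=QW} to fix a $\delta_\eps>0$ so that for $\delta\in(0,\delta_\eps]$ one has $(\partial E_\delta)\cap B_{1-\eps}=\varnothing$. Since $B_{1-\eps}$ is connected and disjoint from $\partial E_\delta$, the indicator $\chi_{E_\delta}$ is constant on $B_{1-\eps}$, and therefore either $E_\delta\cap B_{1-\eps}=\varnothing$ (which is what we want) or $B_{1-\eps}\subseteq E_\delta$. The whole point of the corollary is to exclude the second alternative.

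Assume, by contradiction, that $B_{1-\eps}\subseteq E_\delta$. Since $E_\delta\setminus B_1=K_\delta\subseteq B_{1+\delta}\setminus B_1$, we have $\R^n\setminus B_{1+\delta}\subseteq E_\delta^c$. Restricting from below the $s$-perimeter gives
$$ \per(E_\delta,B_1)\ge L(E_\delta\cap B_1,E_\delta^c)\ge L\big(B_{1-\eps},\R^n\setminus B_{1+\delta}\big)\ge L\big(B_{1-\eps},\R^n\setminus B_2\big),$$
provided $\delta\le 1$. The last quantity is a strictly positive constant $c_\eps>0$ depending only on $\eps$, $n$ and $s$: indeed, for $x\in B_{1-\eps}$ and $y\in\R^n\setminus B_2$ we have $|x-y|\ge 1$, so the double integral converges and is bounded below by the product of the two (positive) Lebesgue measures times the reciprocal of a universal diameter factor.

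On the other hand, since $E_\delta$ is a minimizer with exterior datum $K_\delta$, the competitor $K_\delta$ itself gives $\per(E_\delta,B_1)\le \per(K_\delta,B_1)$. By Lemma~\ref{89=IO}, this last quantity can be made smaller than $c_\eps/2$ by choosing $\delta$ sufficiently small (possibly shrinking $\delta_\eps$). This contradicts the lower bound above, ruling out the case $B_{1-\eps}\subseteq E_\delta$ and leaving only $E_\delta\cap B_{1-\eps}=\varnothing$. The only real subtlety is making sure the lower bound $L(B_{1-\eps},\R^n\setminus B_2)$ is a genuinely universal constant (no hidden dependence on $\delta$), but this is immediate from the fact that the integrand is bounded and integrable on the domain in question.
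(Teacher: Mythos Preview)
Your proof is correct and follows essentially the same approach as the paper: invoke Lemma~\ref{89=QW} to reduce to the dichotomy $E_\delta\cap B_{1-\eps}=\varnothing$ versus $B_{1-\eps}\subseteq E_\delta$, then rule out the second case by exhibiting a uniform positive lower bound on $\per(E_\delta,B_1)$ that contradicts Lemma~\ref{89=IO}. The only difference is the choice of comparison region in $E_\delta^c$: the paper uses the bounded half-annulus $H=(B_2\setminus B_1)\cap\{x_n>0\}$ and further restricts $B_{1-\eps}$ to $B_{1/2}$ to get a constant independent of $\eps$, whereas you use the unbounded exterior $\R^n\setminus B_2$ and keep the $\eps$-dependent constant $c_\eps$. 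Both choices work equally well. One cosmetic remark: your justification ``bounded below by the product of the two Lebesgue measures times the reciprocal of a universal diameter factor'' does not literally apply, since $\R^n\setminus B_2$ has infinite measure and $|x-y|$ is unbounded; but positivity of $L(B_{1-\eps},\R^n\setminus B_2)$ is immediate from the integrand being strictly positive on a set of positive measure, so the argument stands.
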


\begin{proof} Without loss of generality, we
may suppose that~$\eps\in(0,1/2)$.
The proof is by contradiction.
Suppose that~$ E_\delta\cap B_{1-\eps} \ne\varnothing$.
Then, by Lemma~\ref{89=QW}, we have that~$ B_{1-\eps}\subseteq
E_\delta$. Moreover, if we set
$$ H := \big( B_{2}\setminus B_1\big)\cap \{x_n>0\},$$
we have that~$H\subseteq E_\delta^c$.
As a consequence, 
$$ \per(E_\delta,B_1)\ge L(B_{1-\eps}, H)\ge L(B_{1/2},H)\ge c,$$
for some~$c>0$. This is in contradiction with Lemma~\ref{89=IO}
and so it proves the desired result.
\end{proof}

With this, we are in the position of completing the proof
of Theorem~\ref{89=THM}:

\begin{proof}[Proof of Theorem~\ref{89=THM}]
We need to show that~$E_\delta\cap B_1=\varnothing$.
By contradiction, suppose not. Then there exists
\begin{equation}\label{TY67}
p\in E_\delta\cap B_1.\end{equation} 
By Corollary~\ref{89=QW-2},
we know that
\begin{equation}\label{IOP89}
{\mbox{$B_r\subset E_\delta^c$ if $r\in(0,1-\eps)$.}}\end{equation}
We enlarge~$r$ till $B_r$ hits~$\partial E_\delta$. That is, by~\eqref{TY67},
there exists~$\rho \in[1-\eps,1)$ such that~$B_{\rho}\subset E_\delta^c$
and there exists~$q\in (\partial B_{\rho})\cap (\partial E_\delta)$
(see Figure~\ref{HB1}).

\begin{figure}
    \centering
    \includegraphics[height=5.9cm]{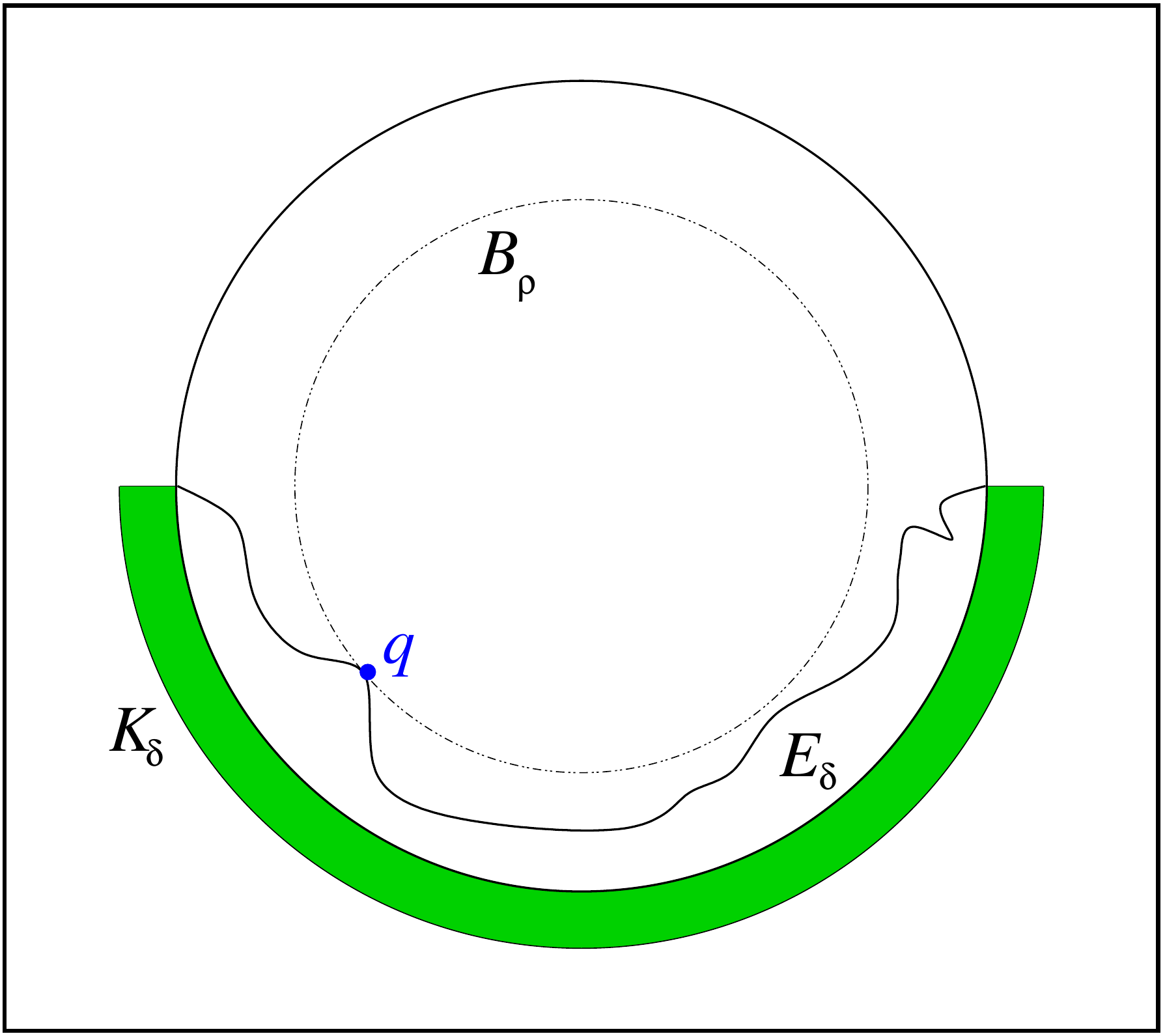}
    \includegraphics[height=5.9cm]{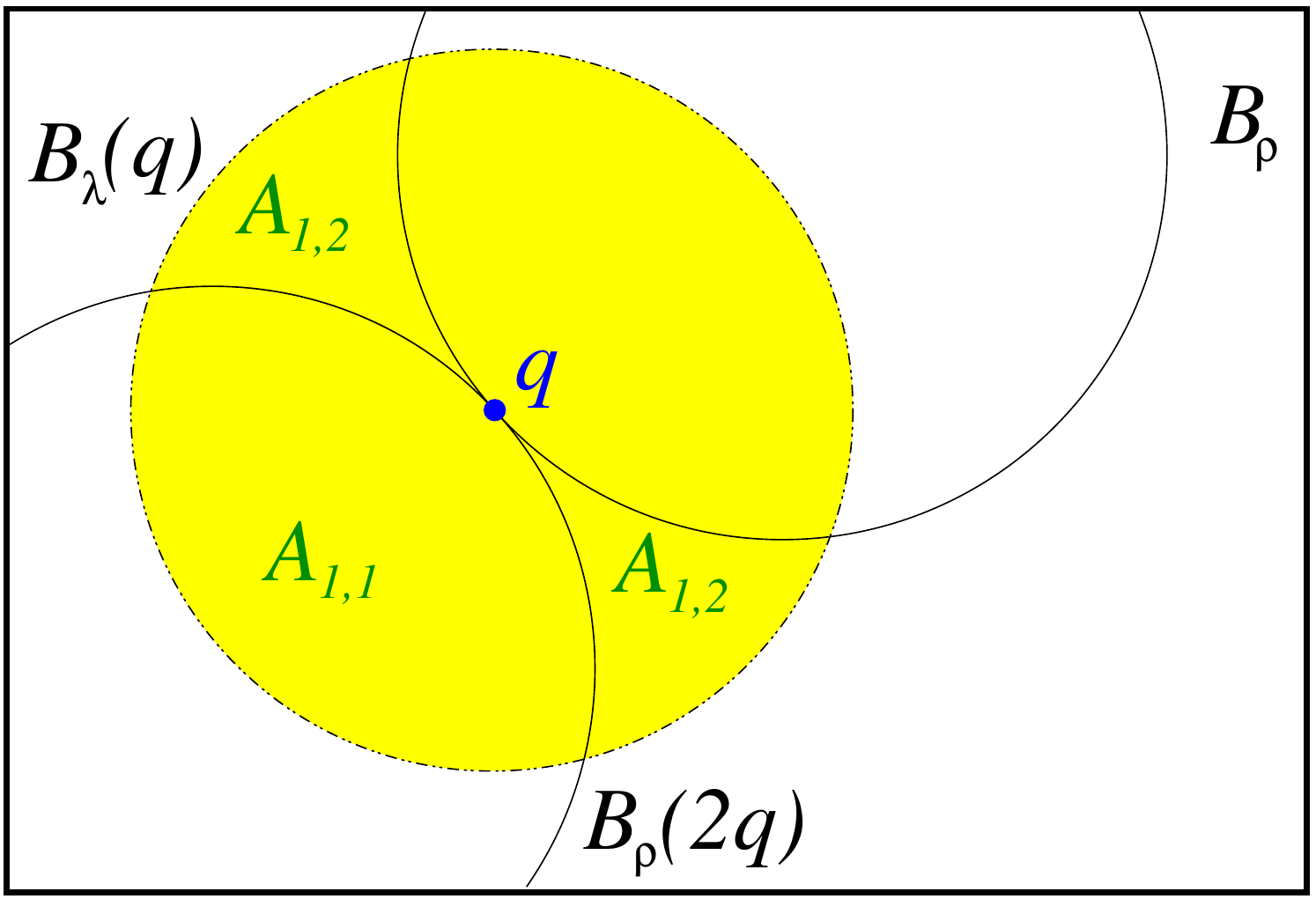}
    \caption{Touching the set~$E_\delta$ coming from the origin.}
    \label{HB1}
\end{figure}

Therefore, using the Euler-Lagrange equation in the viscosity sense
(see Theorem~5.1 in~\cite{CRS}), we conclude that
\begin{equation}\label{89=001}
\int_{\R^n} \frac{\chi_{E^c_\delta}(y)- \chi_{E_\delta}(y)}{|q-y|^{n+2s}}\,dy\le0.\end{equation}
By \eqref{IOP89}, we know that
$$ E_\delta\subseteq (B_1\setminus B_{\rho})\cup K_\delta
\subseteq B_{1+\delta}\setminus B_{\rho}$$
and so
\begin{equation}\label{89=002}
\int_{\R^n} \frac{\chi_{E_\delta}(y)-\chi_{E^c_\delta}(y)}{|q-y|^{n+2s}}
\le \int_{B_{1+\delta}\setminus B_{\rho}} \frac{dy}{|q-y|^{n+2s}}
-\int_{B_{\rho}} \frac{dy}{|q-y|^{n+2s}}.\end{equation}
In addition, if~$y\in B_{1/2}$, then~$|q-y|\le |q|+|y|<2$ and so
\begin{equation} \label{09ip}
\int_{B_{1/2}} \frac{dy}{|q-y|^{n+2s}} \ge \tilde c,\end{equation}
for some~$\tilde c>0$. 

Now we define~$\lambda:=(\eps+\delta)^\frac{1}{2(n+2s)}$. We
notice that~$\lambda$ is small if so are~$\eps$ and~$\delta$,
and so~$B_\lambda(q)\subset B_{1/2}^c$. Then, formula~\eqref{09ip}
gives that
$$ \int_{B_{\rho}} \frac{dy}{|q-y|^{n+2s}}\ge
\tilde c+ \int_{B_\lambda(q)\cap B_{\rho}} \frac{dy}{|q-y|^{n+2s}}.$$
This, \eqref{89=001} and~\eqref{89=002}
give that
\begin{equation}\label{89=003}
\int_{B_{1+\delta}\setminus B_{\rho}} \frac{dy}{|q-y|^{n+2s}}-
\int_{B_{\lambda}(q)\cap B_{\rho}} \frac{dy}{|q-y|^{n+2s}}
\ge \tilde c.
\end{equation}
Now we define
$$ A_1:=\big( B_{1+\delta}\setminus B_{\rho} \big)\cap B_{\lambda}(q)
\quad{\mbox{ and }} \quad A_2:=
\big( B_{1+\delta}\setminus B_{\rho} \big)\setminus B_{\lambda}(q).$$
We notice that
$$ \int_{A_2} \frac{dy}{|q-y|^{n+2s}}\le\frac{|A_2|}{\lambda^{n+2s}}
\le \frac{|B_{1+\delta}\setminus B_{\rho}|}{\lambda^{n+2s}}
\le \frac{C\,(\eps+\delta)}{\lambda^{n+2s}} = C\,\sqrt{\eps+\delta},$$
for some~$C>0$. Hence, \eqref{89=003}
becomes
\begin{equation}\label{89=004}
\int_{A_1} \frac{dy}{|q-y|^{n+2s}}-
\int_{B_{\lambda}(q)\cap B_{\rho}} \frac{dy}{|q-y|^{n+2s}}
\ge \frac{\tilde c}{2}.\end{equation}
Now we set
$$ A_{1,1}:=A_1\cap B_{\rho}(2q)
\quad{\mbox{ and }} \quad A_{1,2}:= A_1\setminus B_{\rho}(2q),$$
see again Figure~\ref{HB1}.
We remark that~$B_{\rho}(2q)$ is tangent to~$B_{\rho}$ at the point~$q$,
and~$A_{1,1}\subseteq B_{\lambda}(q)\cap B_{\rho}(2q)$.
Therefore, by symmetry
\begin{equation}\label{0oiGH}
\int_{A_{1,1}} \frac{dy}{|q-y|^{n+2s}}\le
\int_{B_{\lambda}(q)\cap B_{\rho}(2q)} \frac{dy}{|q-y|^{n+2s}}
=
\int_{B_{\lambda}(q)\cap B_{\rho}} \frac{dy}{|q-y|^{n+2s}}.\end{equation}
Now we observe that~$A_{1,2}$ is trapped between~$B_{\rho}$
and~$B_{\rho}(2q)$, and it lies in~$B_\lambda(q)$
therefore (see e.g. Lemma~3.1 in~\cite{nostro})   
$$ \int_{A_{1,2}} \frac{dy}{|q-y|^{n+2s}}\le
C\rho^{-2s} \lambda^{1-2s}\le C\lambda^{1-2s} =C
\,(\eps+\delta)^{\frac{1-2s}{2(n+2s)}},$$
up to renaming constants.

The latter estimate and~\eqref{0oiGH} give
$$ \int_{A_1} \frac{dy}{|q-y|^{n+2s}}\le
\int_{B_{\lambda}(q)\cap B_{\rho}} \frac{dy}{|q-y|^{n+2s}}+
C\,(\eps+\delta)^{\frac{1-2s}{2(n+2s)}}.$$
By inserting this information into~\eqref{89=004},
we obtain~$2C\,(\eps+\delta)^{\frac{1-2s}{2(n+2s)}}\ge\tilde c$,
which leads to a contradiction by choosing~$\eps$ small enough
(and thus~$\delta\le\delta_\eps$ small).
\end{proof}

\section{Stickiness to the sides of a box}\label{SEC:BOX}

In this section, we discuss the stickiness properties
to the sides of a box with high oscillatory external data
and we prove Theorem~\ref{CYL-THEOR}. 
To this
goal, 
we recall that the set~$J_M$ has been defined 
in~\eqref{jump} and~$E_M$ is the $s$-minimal set
in~$(-1,1)\times\R$
with datum outside~$(-1,1)\times\R$ equal to~$J_M$.

We first establish an easier version of
Theorem~\ref{CYL-THEOR}, in which the sticking size is proved
to be at least of the order of the oscillation
(then, a refined estimate will lead to the proof of
Theorem~\ref{CYL-THEOR}).

\begin{proposition}\label{CYL}
There exist~$M_o>0$, $c_o\in(0,1)$, depending on~$s$, such that
if~$M\ge M_o$ then
\begin{eqnarray}
&& [-1,1)\times [c_o M,M]\subseteq E_M^c \label{SL-011}
\\{\mbox{and }}&& (-1,1]\times [-M,-c_o M]\subseteq E_M.
\label{SL-012}
\end{eqnarray}
\end{proposition}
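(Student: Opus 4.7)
The plan is to combine a symmetry reduction with a sliding barrier argument.

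\emph{Symmetry reduction.} The point reflection $\sigma(x,y):=(-x,-y)$ sends $J_M^-$ onto $J_M^c\cap\{x>1\}$ and $J_M^+$ onto $J_M^c\cap\{x<-1\}$, so $\sigma(J_M)=\Omega^c\setminus J_M$, with $\Omega:=(-1,1)\times\R$. Consequently, $\sigma(E_M)$ is the $s$-minimizer of $\per(\cdot,\Omega)$ with the complementary exterior datum, and by the invariance of the functional under $E\leftrightarrow E^c$ (with the datum correspondingly flipped) it must coincide with $E_M^c$ up to null sets. The map $\sigma$ sends \eqref{SL-011} to \eqref{SL-012}, so the two are equivalent, and I focus on \eqref{SL-012}.

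\emph{Sliding barrier.} Introduce the family of admissible sets
$$\widehat H_t:=\bigl((-1,1)\times(-\infty,t)\bigr)\cup J_M\qquad(t\in\R),$$
each of which agrees with $J_M$ outside $\Omega$. A preliminary density estimate in the spirit of Lemma~\ref{89=QW} (any hole of $E_M^c$ far below $-M$ would be eliminated by the competitor $E_M\cup B_r$, thanks to the strong attractive pull of $J_M^-\cup J_M^+$) shows $\widehat H_t\subseteq E_M$ for every $t$ sufficiently negative. Set
$$t_\star:=\sup\{\,t\in\R:\widehat H_t\subseteq E_M\,\},$$
and suppose, for contradiction, that $t_\star<-c_oM$. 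Then there is a touching point $q=(q_1,t_\star)\in\partial E_M\cap\partial\widehat H_{t_\star}$, and since $\widehat H_{t_\star}$ is smooth near $q$ and contained in $E_M$ locally, the viscosity Euler--Lagrange inequality for $s$-minimal sets (Theorem~5.1 in~\cite{CRS}) yields
$$H_s[\widehat H_{t_\star}](q)\,:=\,\int_{\R^2}\frac{\chi_{\widehat H_{t_\star}^c}(y)-\chi_{\widehat H_{t_\star}}(y)}{|q-y|^{n+2s}}\,dy\ \geq\ 0.$$

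\emph{Curvature computation.} Inside the box, $\widehat H_{t_\star}\cap\Omega$ is the truncated half-plane $\{y_2<t_\star\}\cap\Omega$, whose contribution to $H_s[\widehat H_{t_\star}](q)$ vanishes by odd symmetry in $y_2-t_\star$. Outside the box, $\widehat H_{t_\star}$ differs from $\{y_2<t_\star\}$ only in two bands: a ``left'' band $\{y_1<-1,\ -M<y_2<t_\star\}$ of height $t_\star+M<(1-c_o)M$, and a ``right'' band $\{y_1>1,\ t_\star<y_2<M\}$ of height $M-t_\star>(1+c_o)M$. A short computation then gives
$$H_s[\widehat H_{t_\star}](q)=2\iint_{\{y_1<-1,\,-M<y_2<t_\star\}}\frac{dy}{|q-y|^{n+2s}}-2\iint_{\{y_1>1,\,t_\star<y_2<M\}}\frac{dy}{|q-y|^{n+2s}}.$$
By the $\sigma$-symmetry, the touching point may be taken with $q_1\geq 0$ (otherwise the companion sliding attached to \eqref{SL-011} produces a touching point with first coordinate in $[0,1]$). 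For such $q_1$, the right band is laterally closer ($1-q_1\leq 1+q_1$) and vertically taller than the left one, and an explicit scaling estimate on $I(a,b):=\int_a^\infty\int_0^b(u^2+v^2)^{-1-s}du\,dv$ (which tends to $C_s/a^{2s}$ as $b\to\infty$, with $O(M^{-2s})$ corrections quantifying the gap) yields $H_s[\widehat H_{t_\star}](q)<0$ strictly, contradicting the Euler--Lagrange inequality. Hence $t_\star\geq -c_oM$, so $(-1,1)\times[-M,-c_oM]\subseteq E_M$, i.e.\ \eqref{SL-012}.

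The hard part will be the quantitative sign of the curvature $H_s[\widehat H_{t_\star}](q)$---ensuring strict negativity with a margin uniform over $q_1\in[0,1]$ and $t_\star\in(-M,-c_oM)$, in particular as $q_1\to 1$---together with the rigorous justification, via the $\sigma$-symmetry, that the sliding touching point can indeed be placed on the favorable side.
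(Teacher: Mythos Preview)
Your symmetry reduction is fine, and the formula you derive for $H_s[\widehat H_{t_\star}](q)$ is correct. The difficulty is exactly the one you flag at the end, and it is fatal for the horizontal sliding scheme: you cannot place the touching point on the favourable side.

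By the very symmetry $\sigma(E_M)=E_M^c$, the interface in $\Omega$ is the graph of an odd function $u:(-1,1)\to\R$; the data (low on the left, high on the right) force $u$ to be increasing. Hence for~\eqref{SL-012}, sliding $\{y_2<t\}$ upward, the first contact occurs at $q_1=\arg\min u$, which is arbitrarily close to $-1$. At such $q$ the left band in your formula sits at lateral distance $1+q_1\to 0$, so its contribution blows up like $(1+q_1)^{-2s}$ and dominates the bounded right-band term, giving $H_s[\widehat H_{t_\star}](q)>0$; no contradiction. The $\sigma$-symmetry does not rescue this: it sends your unfavourable touching point for~\eqref{SL-012} (near $x_1=-1$) to an equally unfavourable touching point for~\eqref{SL-011} (near $x_1=+1$), where the same blow-up occurs on the right. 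In short, horizontal sliding always touches near the ``wrong'' wall, and there the sign of the barrier curvature is the opposite of what you need. (There is also the related issue that the infimum of $u$ may only be attained in the limit at $x_1=-1$, so the interior Euler--Lagrange inequality may not apply at all.)

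The paper avoids this by sliding in the \emph{horizontal} direction rather than the vertical one: for~\eqref{SL-011} one pushes a ball of radius $\eps_o M$, centred on a line $\{x_2=(1-t)M\}$, from left to right, starting in the region $\{x_1<-1\}\subseteq E_M^c$. One then shows that the first contact with $\partial E_M$ cannot occur at any $q_1\in[-1,1)$, by a careful partition of the plane into five regions (the sliding ball, its reflection through $q$, the bow-tie region between them, and the exterior) and a contribution count that exploits the large radius $\eps_o M$. The key gain is that the sliding direction is chosen so that the barrier enters from the side where the datum is already favourable, and the contact is forced into the interior of the strip rather than against a wall.
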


\begin{proof} We denote coordinates in~$\R^2$ by~$x=(x_1,x_2)$.
We take~$\eps_o>0$, to be chosen conveniently small
in the sequel. Let~$t\in[0,\eps_o^2]$.
We considers balls of radius~$\eps_o M$ with center lying on the
straight line~$\{ x_2=(1-t)M\}$. The idea of the proof
is to slide a ball of this type from left to right till
we touch~$\partial E_M$. We will show that the touching point
can only occur along the boundary~$\{x_1=1\}$. 
Hence, by varying~$t\in[0,\eps_o^2]$, we obtain that~$[-1,1)\times
[(1-\eps_o^2) M,M]$ is contained in~$E_M^c$. This would complete
the proof of~\eqref{SL-011}
(and the proof of~\eqref{SL-012} is similar).

The details of the proof of \eqref{SL-011} are the following.
We fix~$t\in[0,\eps_o^2]$.
If~$x_1<-M-2$, then
the ball~$B_{\eps_o M} ( x_1, (1-t)M)$ lies in~$(-\infty,-2)\times\R$,
and so its closure is contained in~$E_M^c$.
Hence, we consider~$\ell\ge -M-2$ such that~$
\overline{B_{\eps_o M} ( \ell, (1-t)M)}\subseteq E_M^c$
for any~$x_1<\ell$ and there exists~$q=(q_{1},q_{2})\in
(\partial E_M)\cap (\partial B_{\eps_o M} ( \ell, (1-t)M))$.
The proof of~\eqref{SL-011} is complete if we show that
\begin{equation}\label{SL-013}
q_{1} \ge 1.
\end{equation}
To prove this, we argue by contradiction.
If not, then~$q_1\in [-1,1)$, therefore,
by the Euler-Lagrange inequality (see
Theorem~5.1 in~\cite{CRS}), 
\begin{equation}\label{89=001=BIS}
\int_{\R^2} \frac{\chi_{E^c_M}(y)- \chi_{E_M}(y)}{|q-y|^{2+2s}}\,dy
\le0.\end{equation}
Now we denote by~$z:=(\ell, (1-t)M))$ the center
of the touching ball.
We also consider the extremal point of the touching
ball on the right, that we
denote by~$p:= z+(\eps_o M,0)$.
We claim that
\begin{equation}\label{Y7}
|q_2-p_2|\le 8\sqrt{\eps_o M}.\end{equation}
To prove this, we observe that,
by construction,
both~$q$ and~$p$ lie in~$[-1,1]\times\R$, hence~$|q_1|$, $|p_1|\le1$,
consequently
\begin{equation}\label{q1p12}
|q_1-p_1|\le2.\end{equation}
Also, both~$q$ and~$p$ lie on the boundary
of the touching ball, namely~$|q-z|=\eps_o M=|p-z|$, therefore
\begin{eqnarray*}
&& 0=|q-z|^2 - |p-z|^2
= |q|^2 -2q\cdot z -|p|^2 +2p\cdot z
= (q-p)\cdot (q+p-2z)
\\ &&\qquad =
(q_1-p_1)(q_1+p_1-2z_1)+
(q_2-p_2)(q_2+p_2-2z_2)
\\ &&\qquad=
(q_1-p_1)(q_1-p_1+2\eps_o M)+
(q_2-p_2)(q_2-p_2)
\\ &&\qquad\ge
-4(1+\eps_o M)+|q_2-p_2|^2.
\end{eqnarray*}
This establishes~\eqref{Y7}, provided that~$M$
is large enough (possibly in dependence of~$\eps_o$).

Now we consider the symmetric ball to the touching ball,
with respect to the touching point~$q$. That is, we define~$\bar z:=
z+2(q-z)$ and consider the ball~$B_{\eps_o M}(\bar z)$.
We remark that
\begin{equation}
{\mbox{$B_{\eps_o M}(z)$ and~$B_{\eps_o M}(\bar z)$
are tangent to each other at~$q$.}}\end{equation}
We also claim that
\begin{equation}\label{Y8}
B_{\eps_o M}(\bar z)\cap
\left\{ x_2> \bar z_2+2\eps_o^2 M\right\}
\subseteq \{ x_2>M\}.
\end{equation}
To prove this, we observe that
$$ -\eps_o^2 M -16\sqrt{\eps_o M}+2\eps_o^2 M
= \eps_o^2 M \left( 1 -\frac{16}{\eps_o^{3/2}\sqrt{M}}\right)>0,$$
if~$M$ is large enough.
Hence, recalling~\eqref{Y7},
\begin{eqnarray*}
&& \bar z_2+ 2\eps_o^2 M =
z_2+2(q_2-z_2) + 2\eps_o^2 M
= (1-t)M+2(q_2-p_2) + 2\eps_o^2 M\\
&&\qquad\ge (1-\eps_o^2)M -16\sqrt{\eps_o M}+ 2\eps_o^2 M> M.
\end{eqnarray*}
This proves~\eqref{Y8}.

\begin{figure}
    \centering
    \includegraphics[height=5.9cm]{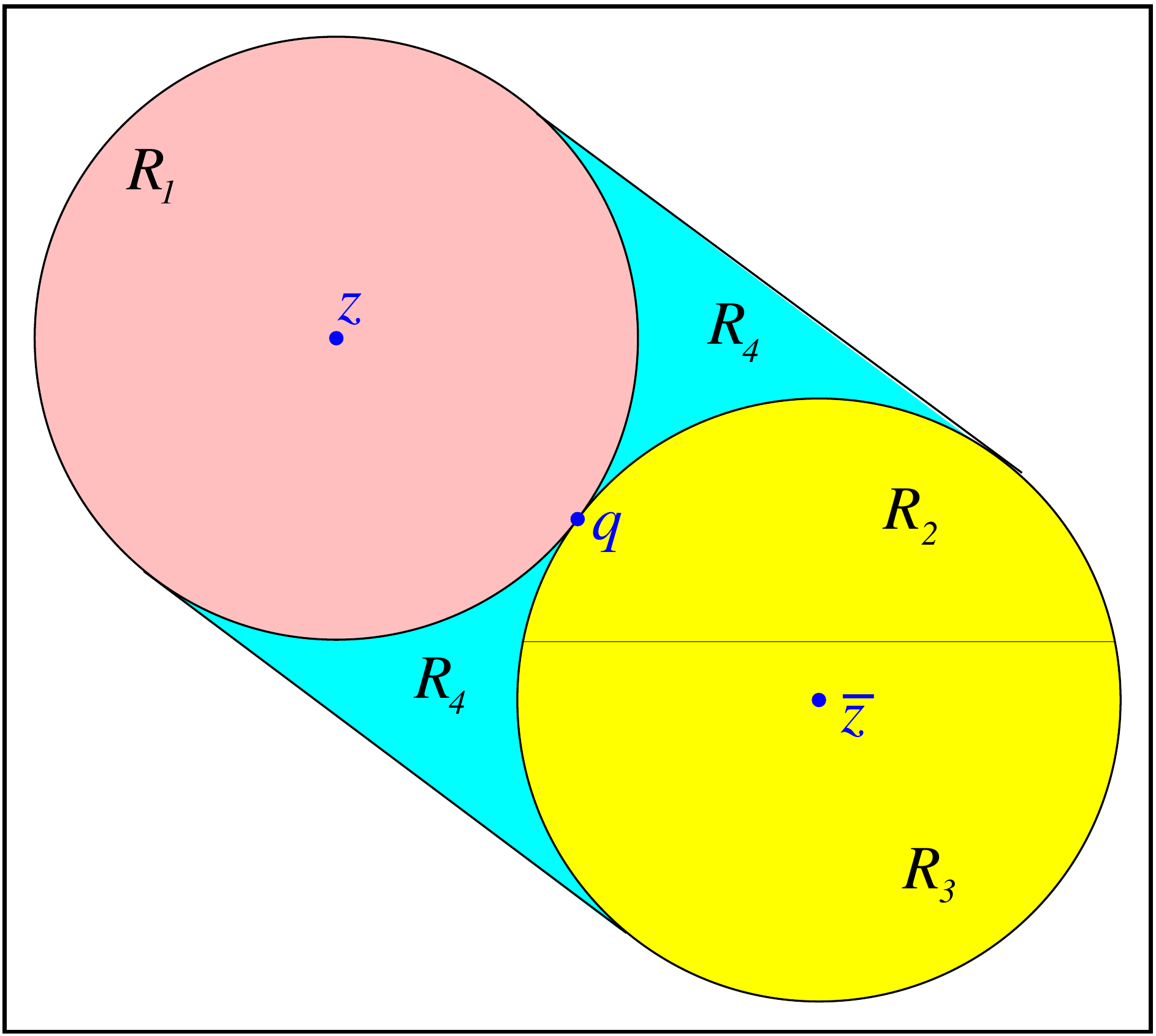}
    \caption{The partition of the plane needed for the proof
of Proposition~\ref{CYL}.}
    \label{REG}
\end{figure}

Now we decompose~$\R^2$ into five nonoverlapping
regions. Namely, we consider
\begin{eqnarray*}
&& R_1:=B_{\eps_o M}(z),\\
&& R_2:=B_{\eps_o M}(\bar z)\cap
\left\{ x_2> \bar z_2+ 2\eps_o^2 M\right\}\\
{\mbox{and }}&&
R_3:=
B_{\eps_o M}(\bar z)\cap
\left\{ x_2\le \bar z_2+ 2\eps_o^2 M\right\}.\end{eqnarray*}
Then we define~$D:=B_{\eps_o M}(z)\cup B_{\eps_o M}(\bar z)$,
$K$ the convex hull
of~$D$ and~$R_4:=K\setminus D$. Finally,
we set~$R_5:=\R^2 \setminus K$ and consider the partition of~$\R^2$
given by the regions~$R_1,\dots,R_5$.

We consider the contribution to the integral
in~\eqref{89=001=BIS}
given by these regions. The regions~$R_1$, $R_2$ and~$R_3$
will be considered together: namely, $R_1\subseteq E^c_M$,
and, by~\eqref{Y8}, also~$R_2\subseteq E^c_M$. Therefore,
by symmetry
\begin{equation}\label{Y9-K}
\int_{R_1\cup R_2\cup R_3}
\frac{\chi_{E^c_M}(y)- \chi_{E_M}(y)}{|q-y|^{2+2s}}\,dy\ge
\int_{R_1\cup R_2}
\frac{dy}{|q-y|^{2+2s}}
-
\int_{R_3}
\frac{dy}{|q-y|^{2+2s}} =
\int_{R_2}
\frac{2\,dy}{|q-y|^{2+2s}}
.\end{equation}
Now, for~$y\in R_2$, we consider the change of variable~$\tilde y=T(y)
:=(y-q)/(\eps_o M)$. We have that
\begin{equation}\label{INC0}\begin{split}
T(R_2) \,&=\,
B_{1}\left( \frac{q-z}{\eps_o M}\right)
\cap
\left\{ \tilde y_2> \frac{q_2-z_2}{\eps_o M}+2\eps_o
\right\} \\
&\supseteq\,
B_{1}\left( \frac{q-z}{\eps_o M}\right)
\cap
\left\{ \tilde y_2> 3\eps_o\right\},
\end{split}\end{equation}
where we used again~\eqref{Y7}
in the last inclusion (provided that~$\eps_o$ is sufficiently
small and~$M$ is sufficiently large, possibly in dependence
of~$\eps_o$).
 
Now we claim that
\begin{equation}\label{INC-1}
B_{\eps_o}(5\eps_o,5\eps_o)
\subseteq B_{1}\left( \frac{q-z}{\eps_o M}\right)
\cap
\left\{ \tilde y_2> 3\eps_o \right\}
.\end{equation}
To prove this, it is enough to take~$\eta\in B_{\eps_o}$
and show that
\begin{equation} \label{INC-2}
(5\eps_o,5\eps_o)+\eta\in 
B_1\left( \frac{q-z}{\eps_o M}\right).\end{equation}
For this, we use~\eqref{Y7} to observe that
\begin{equation}\label{PHG} 
|q_1-z_1|^2=|q-z|^2 -|q_2-z_2|^2 \ge 
(\eps_o M)^2-64\eps_o M.\end{equation}
Moreover, by~\eqref{q1p12},
$$ q_1-z_1=q_1-p_1+p_1-z_1=q_1-p_1+\eps_o M\ge \eps_o M-2>0.$$
Hence, \eqref{PHG}
gives that
$$ \frac{q_1-z_1}{\eps_o M} =\frac{|q_1-z_1|}{\eps_o M}
\ge \sqrt{1 -\frac{64}{\eps_o M}}
\ge 1 -\frac{128}{\eps_o M},$$
if~$M$ is large enough.
In particular
$$ \frac{q_1-z_1}{\eps_o M} - 5\eps_o -\eta_1
\ge 1 -\frac{128}{\eps_o M} -6\eps_o
\ge\frac12 -\frac{128}{\eps_o M}>0,$$
provided that~$\eps_o$ is small enough and~$M$ large enough
(possibly depending on~$\eps_o$). Therefore
$$ \left|
\frac{q_1-z_1}{\eps_o M} - 5\eps_o -\eta_1\right|
=
\frac{q_1-z_1}{\eps_o M} - 5\eps_o -\eta_1 \le
\frac{|q_1-z_1|}{\eps_o M} - 4\eps_o \le 1-4\eps_o.$$
In addition, by~\eqref{Y7},
$$
\left|
\frac{q_2-z_2}{\eps_o M} - 5\eps_o -\eta_2\right|\le
\frac{|q_2-z_2|}{\eps_o M} +6\eps_o\le 7\eps_o.$$
Therefore
\begin{eqnarray*}
&& \left|
\frac{q-z}{\eps_o M} - (5\eps_o,5\eps_o) -\eta\right|^2 \le \left(
1-4\eps_o\right)^2
+ \left( 7\eps_o\right)^2 \\
&&\qquad= 1 -8\eps_o + 16\eps_o^2+49\eps_o^2 
< 1 
\end{eqnarray*}
if~$\eps_o$ is small enough.
This establishes~\eqref{INC-2} and therefore~\eqref{INC-1}.

{F}rom~\eqref{INC0} and~\eqref{INC-1}, we see that
$$ T(R_2) \supseteq B_{\eps_o}(5\eps_o,5\eps_o)$$
and then
\begin{equation}\label{INC-12} \int_{R_2} \frac{dy}{|q-y|^{2+2s}}=
\frac{1}{(\eps_o M)^{2s}}\int_{T(R_2)}
\frac{d\tilde y}{|\tilde y|^{2+2s}}\ge
\frac{1}{(\eps_o M)^{2s}}\int_{
B_{\eps_o}(5\eps_o,5\eps_o) }
\frac{d\tilde y}{|\tilde y|^{2+2s}}.\end{equation}
Now, if~$\tilde y\in B_{\eps_o}(5\eps_o,5\eps_o)$
then~$|\tilde y|\le \eps_o+|(5\eps_o,5\eps_o)|\le 10\eps_o$,
and then~\eqref{INC-12} gives that
$$ \int_{R_2} \frac{dy}{|q-y|^{2+2s}}\ge
\frac{\tilde c}{\eps_o^{4s} M^{2s}},$$
for some~$\tilde c>0$.
By inserting this into~\eqref{Y9-K}
we conclude that
\begin{equation}\label{Y10-K}
\int_{R_1\cup R_2\cup R_3}
\frac{\chi_{E^c_M}(y)- \chi_{E_M}(y)}{|q-y|^{2+2s}}\,dy\ge
\frac{\tilde c}{\eps_o^{4s} M^{2s}}.\end{equation}
Moreover (see e.g. Lemma~3.1 in~\cite{nostro}
with~$R:=\eps_o M$ and~$\lambda:=1$),
we see that
\begin{equation}\label{Y10-K-2}
\left|\int_{R_4}
\frac{\chi_{E^c_M}(y)- \chi_{E_M}(y)}{|q-y|^{2+2s}}\,dy\right|\le
\int_{R_4}
\frac{dy}{|q-y|^{2+2s}}\le
\frac{C}{\eps_o^{2s} M^{2s}},\end{equation}
for some~$C>0$.
Furthermore, the distance from~$q$ to any point of~$R_5$
is at least~$\eps_o M$, therefore~$R_5\subseteq \R^2\setminus B_{\eps_o M}(q)$,
and
$$ \left|\int_{R_5}
\frac{\chi_{E^c_M}(y)- \chi_{E_M}(y)}{|q-y|^{2+2s}}\,dy\right|\le
\int_{\R^2\setminus B_{\eps_o M}(q)}
\frac{dy}{|q-y|^{2+2s}}=
\frac{\tilde C}{\eps_o^{2s} M^{2s}},$$
for some~$\tilde C>0$.

By combining the latter estimate with~\eqref{Y10-K} and~\eqref{Y10-K-2},
we obtain that
$$ \int_{\R^2}
\frac{\chi_{E^c_M}(y)- \chi_{E_M}(y)}{|q-y|^{2+2s}}\,dy\ge
\frac{1}{\eps_o^{2s} M^{2s}}\left( \frac{\tilde c}{\eps_o^{2s}}-
C-\tilde C\right)>0,$$
provided that~$\eps_o$ is suitably small.
This estimate is in contradiction with~\eqref{89=001=BIS}
and therefore the proof of~\eqref{SL-013} is complete.
\end{proof}

The result in Proposition~\ref{CYL} can be refined.
Namely, not only the optimal set~$E_M$ in Proposition~\ref{CYL}
sticks for an amount of order~$M$ is a box of side~$M$,
but it sticks up to an order of~$M^{\frac{1+2s}{2+2s}}$
from the origin, as the following
Proposition~\ref{CYL-RAFF} points out.
As a matter of fact, the exponent~${\frac{1+2s}{2+2s}}$
is sharp, as we will prove in the subsequent
Proposition~\ref{PR-3.2}.

\begin{proposition}\label{CYL-RAFF}
There exist~$M_o$, $C_o>0$, depending on~$s$, such that
if~$M\ge M_o$ then
\begin{eqnarray}
&& [-1,1)\times [C_o M^{\frac{1+2s}{2+2s}},\,M]\subseteq E_M^c \label{XSL-011}
\\{\mbox{and }}&& (-1,1]\times [-M,\,-C_o M^{\frac{1+2s}{2+2s}}]\subseteq E_M.
\label{XSL-012}
\end{eqnarray}
\end{proposition}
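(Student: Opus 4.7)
The plan is to bootstrap Proposition~\ref{CYL} by a second, sharper sliding-ball argument. Combining Proposition~\ref{CYL} with the comparison principle applied to the half-plane barrier $\{x_2<M\}$, which contains the external datum~$J_M$, one may treat as already known the inclusion
\[ K^+:=\bigl([-1,1)\times[c_oM,M]\bigr)\cup\{x_2>M\}\subseteq E_M^c, \]
and the symmetric inclusion $K^-\subseteq E_M$. To establish~\eqref{XSL-011}, I argue by contradiction: for each height $h\in[C_oM^{\beta},\,c_oM]$ with $\beta:=(1+2s)/(2+2s)$, I would slide a ball $B_r(\ell,h)$ of a carefully tuned radius $r=r(h,M)$ inside $E_M^c$ from left to right, until it first touches $\partial E_M$ at a point $q=(q_1,q_2)$, and the goal is to show that necessarily $q_1\ge1$.

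If instead $q_1<1$, then the Euler--Lagrange inequality \eqref{89=001=BIS} holds at $q$, and I would decompose $\R^2$ as in the proof of Proposition~\ref{CYL} into the regions $R_1,\dots,R_5$, with $R_2$ the top cap of the reflected ball. The key new ingredient is the choice of $r$: by the analogue of~\eqref{Y7} the vertical shift of the reflected ball is bounded by $O(\sqrt{rM})$, so to ensure $R_2\subseteq\{x_2>M\}\subseteq K^+$ one is forced to pick $r$ of the same order as $M-h$. In the regime $h\sim M^\beta$ this forces $r$ itself of order $M$, the positive contribution from $R_2$ (computed as in~\eqref{INC-12}) is of order $r^{-2s}$ up to a geometric penetration factor, and the error terms of type $R_4$ and $R_5$ become of comparable size; what saves the estimate is the separate quantification of the positive integrand coming from $K^+$ at distances of order~$M$, together with a refined choice of the sub-region inside $R_2$ close to $q$ in the rescaled coordinates.

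The main obstacle is the sharp balance with the long-range pull of $J_M^+\subseteq E_M$, which sits at horizontal distance $1-q_1\in[0,2]$ from $q$ and extends vertically throughout the strip of height~$M$. After translating to $q$ and passing to polar coordinates this contribution becomes an explicit two-scale integral in the parameters $1-q_1$ and $M-h$; its magnitude must be measured against the positive $R_2$-contribution with careful bookkeeping of the dependence on $h$ and $M$. Balancing these contributions and optimizing over the free parameter $r=r(h,M)$ leads precisely to the threshold $h\gtrsim M^{(1+2s)/(2+2s)}$, matching the sharp exponent asserted in Theorem~\ref{CYL-THEOR} and to be verified in Proposition~\ref{PR-3.2}. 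The inclusion~\eqref{XSL-012} then follows from an entirely analogous sliding-ball argument performed on the bottom side of the strip, using the companion inclusion $K^-\subseteq E_M$ and the symmetry $(x_1,x_2)\mapsto(-x_1,-x_2)$ of the problem, under which $E_M$ and $E_M^c$ are exchanged.
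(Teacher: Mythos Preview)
Your sketch names the right ingredients (slide a barrier at each height, touch, Euler--Lagrange, balance far-field contributions) but the specific plan you describe does not close, and the decisive balancing step is only asserted, not carried out. If you slide a full ball of radius~$r$ and reuse the $R_1,\dots,R_5$ decomposition from Proposition~\ref{CYL}, then at height~$h\sim M^\beta$ the reflected ball~$B_r(\bar z)$ is centered near height~$h$, so its cap above~$\{x_2>M\}$ lies at distance~$\sim M-h\sim M$ from the contact point~$q$; that cap therefore contributes at most~$O(M^{-2s})$ times a geometric factor, which is the \emph{same} order as the~$R_4,R_5$ errors---you acknowledge this yourself. The rescue you invoke (``separate quantification of the positive integrand coming from~$K^+$'' and ``refined choice of the sub-region inside~$R_2$'') is precisely the missing argument. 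Also, the vertical displacement estimate is~$|q_2-p_2|=O(\sqrt{r})$, not~$O(\sqrt{rM})$: the bound comes only from~$|q_1-p_1|\le2$.

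The paper proceeds quite differently. It does not slide a full ball: the barrier at height~$t$ is the \emph{truncated cap}
\[
S_\mu=B_{M^{2\beta}}(\mu-M^{2\beta},t)\cap\{|x_2-t|<4M^\beta\},
\]
i.e.\ a piece of a ball of radius~$M^{2\beta}$ restricted to a strip of height~$8M^\beta$. At a contact point~$q$ the near field in~$B_{M^\beta}(q)$ is handled by the tangent-ball estimate (Lemma~3.1 in~\cite{nostro}, with~$R=M^{2\beta}$, $\lambda=M^{-\beta}$), giving an error~$O(M^{-\beta(1+2s)})$. The far field is \emph{not} treated via~$R_1,\dots,R_5$; instead the region~$\{|x_1-q_1|>16\}$ is partitioned into eight strips~$H_{i,j}$ and the up/down symmetry cancels~$H_{i,1}$ against~$H_{i,4}$, while left/right symmetry cancels~$H_{1,3}$ against~$H_{2,3}$. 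What survives is~$H_{1,2}\cup H_{2,2}=\{|x_1-q_1|>16,\;M<x_2<2q_2+M\}\subseteq E_M^c$, whose contribution is~$\ge c\,q_2\,M^{-1-2s}\ge c\,C_o\,M^{\beta-1-2s}=c\,C_o\,M^{-\beta(1+2s)}$. Taking~$C_o$ large beats the near-field error. The choice of barrier curvature~$M^{-2\beta}$ is exactly what makes the near-field and far-field scales match; this is the idea your outline is missing.
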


\begin{proof} 
We let~$\beta:={\frac{1+2s}{2+2s}}$.
We focus on the proof of~\eqref{XSL-011}
(the proof of~\eqref{XSL-012} is similar).
The proof is based on a sliding method: we will
consider a suitable surface and we slide it from left to
right in order to ``clean'' the portion
of space~$[-1,1)\times [C_o M^{\beta},M]$.
As a matter of fact, by Proposition~\ref{CYL},
it is enough to take care of~$[-1,1)\times [C_o M^{\beta},\,c_o M]$,
with~$c_o\in(0,1)$.

For this we fix any
\begin{equation}\label{q2whee-pre}
t\in [C_o M^\beta,\,c_o M]\end{equation}
and, for any~$\mu\in\R$,
we define
$$ S_\mu := B_{M^{2\beta}} (\mu-M^{2\beta},t)
\cap \{|x_2-t|<4M^\beta\}.$$
Notice that if~$\mu<-1$ then
$$S_\mu\subseteq (-\infty,-1)\times \{|x_2-t|<4M^\beta\}
\subseteq E_M^c.$$
Therefore we increase~$\mu$ till~$S_\mu$ touches~$\partial E_M$.
This value of $\mu$ will be fixed from now on.
We observe that Proposition~\ref{CYL-RAFF} is proved if we show that
$\mu=1$. So we assume by contradiction that~$\mu\in[-1,1)$.
By construction, we have that 
\begin{equation}\label{9uIIOk}
S_\mu\subseteq E_M^c
\end{equation}
and there exists~$q\in (\partial S_\mu)\cap(\partial E_M)$,
with~$q_1\in [-1,1)$.
We claim that
\begin{equation}\label{q2whee}
|q_2-t|\le 2 M^\beta.
\end{equation}
To prove this, we observe that~$|q_1-\mu+M^{2\beta}|\ge
M^{2\beta} -|q_1|-|\mu|\ge M^{2\beta}-2$. Moreover,
$q\in \partial S_\mu\subseteq\overline{
B_{M^{2\beta}} (\mu-M^{2\beta},t)}$, therefore
\begin{eqnarray*}
M^{4\beta} \ge \big|q-(\mu-M^{2\beta},t)\big|^2
\ge (M^{2\beta}-2)^2 + |q_2-t|^2
\ge M^{4\beta} -4M^{2\beta}+ |q_2-t|^2,
\end{eqnarray*}
from which we obtain~\eqref{q2whee}.

Now, using the Euler-Lagrange equation in the viscosity sense
(see Theorem~5.1 in~\cite{CRS}), we see that
\begin{equation}\label{89=001=E:L}
\int_{\R^n} \frac{\chi_{E^c_M}(y)- \chi_{E_M}(y)}{|q-y|^{2+2s}}\,dy\le0.
\end{equation}
We first estimate the contribution to the integral
above coming from~$B_{M^\beta}(q)$.
For this, we consider the symmetric point
of~$z:=(\mu-M^{2\beta},t)$ with respect to~$q$,
namely we set~$z':= z+2(q-z)$. We also consider the
ball~$B':=B_{M^{2\beta}}(z')$.
Notice that~$B_{M^{2\beta}}(z)$ and~$B'$ are tangent one to the other
at~$q$. We define~$A_1:= B_{M^{2\beta}}(z)\cap B_{M^\beta}(q)$,
$A_2:= B'\cap B_{M^\beta}(q)$ and~$A_3:= B_{M^\beta}(q)\setminus
(A_1\cup A_2)$.
Hence (see e.g.
Lemma~3.1 in~\cite{nostro},
used here with~$R:=M^{2\beta}$ and~$\lambda:=M^{-\beta}$),
we obtain that
\begin{equation}\label{89=001=E:L-1}
\left|\int_{A_3} \frac{\chi_{E^c_M}(y)- \chi_{E_M}(y)}{|q-y|^{2+2s}}\,dy
\right|\le
\int_{A_3} \frac{dy}{|q-y|^{2+2s}}\le C M^{-\beta(1+2s)}.
\end{equation}
Now we observe that
\begin{equation}\label{A1wh}
A_1\subseteq E_M^c.
\end{equation}
For this, let~$y\in A_1$.
Then~$|y-q|< M^\beta$. Therefore, recalling~\eqref{q2whee},
$$ |y_2-t|\le |y_2-q_2|+|q_2-t| < M^\beta + 2 M^\beta < 4M^\beta.$$
Since also~$y\in B_{M^{2\beta}}(z)$, we obtain that~$y\in S_\mu$.
Then we use~\eqref{9uIIOk}
and we finish the proof of~\eqref{A1wh}.

Then, we use~\eqref{A1wh} and a symmetry argument to see that
$$ \int_{A_1\cup A_2}\frac{\chi_{E^c_M}(y)- \chi_{E_M}(y)}{|q-y|^{2+2s}}\,dy
=
\int_{A_1} \frac{dy}{|q-y|^{2+2s}}+
\int_{A_2} \frac{\chi_{E^c_M}(y)- \chi_{E_M}(y)}{|q-y|^{2+2s}}\,dy
\ge 0.$$
This and~\eqref{89=001=E:L-1}
give that
$$ \int_{B_{M^\beta}(q)}
\frac{\chi_{E^c_M}(y)- \chi_{E_M}(y)}{|q-y|^{2+2s}}\,dy\ge- C M^{-\beta(1+2s)}.$$
Consequently,
by~\eqref{89=001=E:L},
\begin{equation}\label{89=001=E:L-2}
\int_{\R^n\setminus B_{M^\beta}(q)}
\frac{\chi_{E^c_M}(y)- \chi_{E_M}(y)}{|q-y|^{2+2s}}\,dy\le
-\int_{B_{M^\beta}(q)}
\frac{\chi_{E^c_M}(y)- \chi_{E_M}(y)}{|q-y|^{2+2s}}\,dy\le
C M^{-\beta(1+2s)}.
\end{equation}
Now we observe that
\begin{equation}\label{POL}
\{|x_1-q_1|\le 16\}\setminus B_{M^\beta}(q)
\subseteq \{|x_1-q_1|\le 16\}\times
\left\{|x_2-q_2|\ge \frac{M^\beta}{2}\right\}
.\end{equation}
To prove this, let~$y\in \{|x_1-q_1|\le 16\}\setminus B_{M^\beta}(q)$
and suppose, by contradiction, that~$|y_2-q_2|<M^\beta/2$.
Then
$$ |y-q|^2 \le 16^2+ \frac{M^{2\beta}}{4}< M^{2\beta}.$$
This would say that~$y\in B_{M^\beta}(q)$,
which is a contradiction, and so~\eqref{POL} is proved.

By~\eqref{POL}, we obtain that
\begin{eqnarray*}
&& \left|
\int_{\{|x_1-q_1|\le 16\}\setminus B_{M^\beta}(q)}
\frac{\chi_{E^c_M}(y)- \chi_{E_M}(y)}{|q-y|^{2+2s}}\,dy\right|
\le \int_{ \{|x_1-q_1|\le 16\}\times
\left\{|x_2-q_2|\ge \frac{M^\beta}{2}\right\} }
\frac{dy}{ |q-y|^{2+2s} } \\
&&\qquad\le \int_{q_1-16}^{q_1+16}
\left( \int_{ \{ |q_2-y_2|\ge M^\beta/2\}}
\frac{dy_2}{ |q_2-y_2|^{2+2s} }
\right)
\,dy_1 = CM^{-\beta(1+2s)},
\end{eqnarray*}
for some~$C>0$.

{F}rom this and~\eqref{89=001=E:L-2}, we obtain that
\begin{equation}\label{89=001=E:L-3}
\int_{ \{|x_1-q_1|> 16\} \setminus B_{M^\beta}(q)}
\frac{\chi_{E^c_M}(y)- \chi_{E_M}(y)}{|q-y|^{2+2s}}\,dy\le
C M^{-\beta(1+2s)},
\end{equation}
up to renaming~$C>0$.

Now we define~$H_1:=\{ x_1-q_1<-16\}$
and~$H_2:=\{x_1-q_1>16\}$.
Notice that~$H_1\subseteq \{ x_1< -15\}$ and~$H_2\subseteq \{x_1>15\}$.
Therefore~$H_1\cap \{x_2>-M\}\subseteq E^c_M$,
$H_1\cap \{x_2<-M\}\subseteq E_M$,
$H_2\cap \{x_2>M\}\subseteq E^c_M$
and~$H_2\cap \{x_2<M\}\subseteq E_M$.

Then, we define, for any~$i\in\{1,2\}$,
\begin{eqnarray*}
&& H_{i,1}:=H_i\cap \{ x_2>2q_2+M\},\\
&& H_{i,2}:=H_i\cap \{ x_2\in (M,2q_2+M]\},\\
&& H_{i,3}:=H_i\cap \{ x_2\in [-M,M]\},\\
&& H_{i,4}:=H_i\cap \{ x_2<-M\},\end{eqnarray*}
see Figure~\ref{HBQA-HSET}.

\begin{figure}
    \centering
    \includegraphics[height=7.9cm]{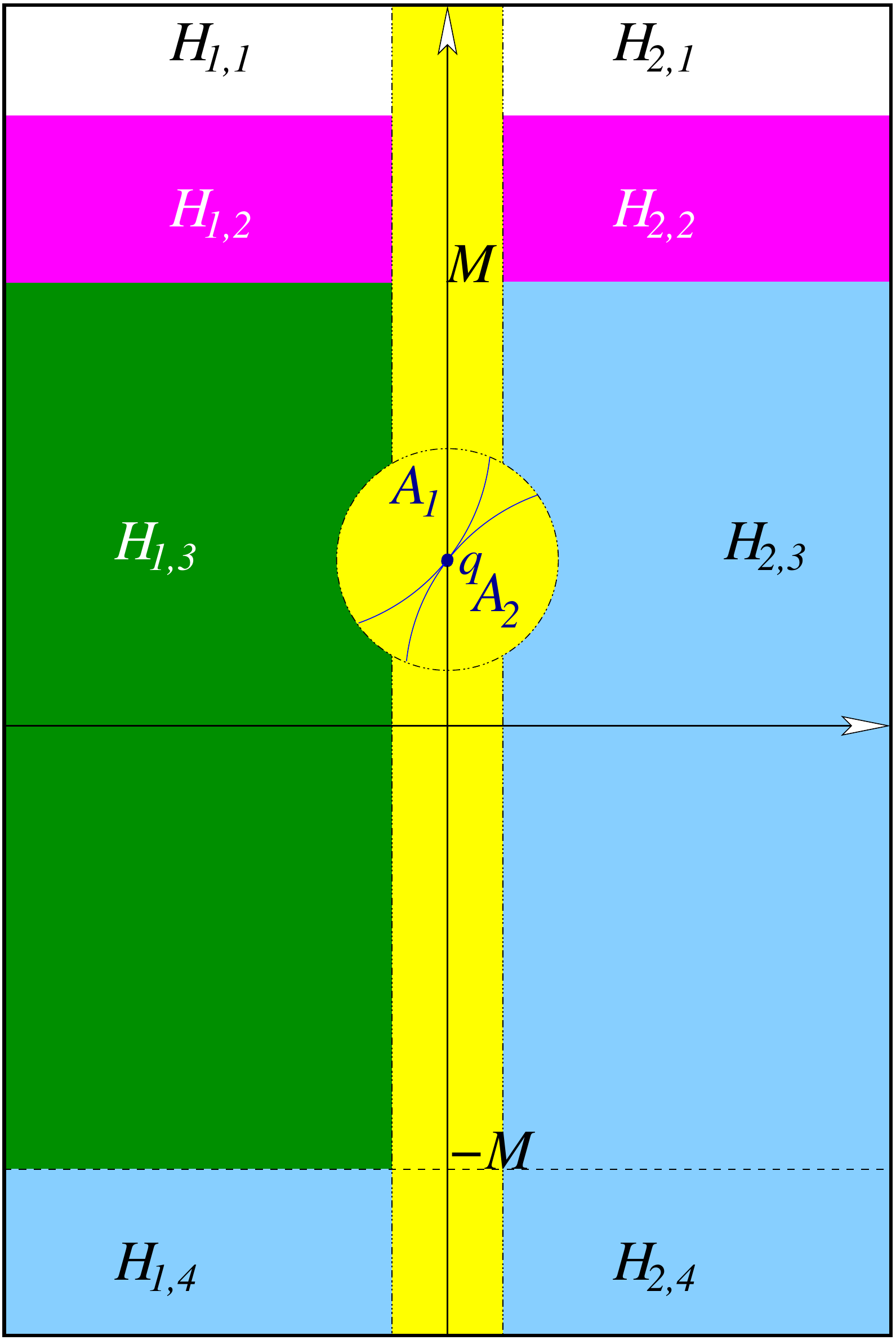}
    \caption{The geometry involved in the proof of Proposition~\ref{CYL-RAFF}.}
    \label{HBQA-HSET}
\end{figure}
 
By construction, $H_{i,1}\subseteq E^c_M$
and~$H_{i,4}\subseteq E_M$,
therefore, by up/down symmetry,
\begin{equation}\label{89=001=E:L-X1}
\int_{ (H_{1,1}\cup H_{1,4})\setminus B_{M^\beta}(q)}
\frac{\chi_{E^c_M}(y)- \chi_{E_M}(y)}{|q-y|^{2+2s}}\,dy =
0=
\int_{ (H_{2,1}\cup H_{2,4})\setminus B_{M^\beta}(q)}
\frac{\chi_{E^c_M}(y)- \chi_{E_M}(y)}{|q-y|^{2+2s}}\,dy
.\end{equation}
Moreover, $H_{1,3}\subseteq E^c_M$
and~$H_{2,3}\subseteq E_M$,
therefore, by left/right symmetry,
\begin{equation}\label{89=001=E:L-X2}
\int_{ (H_{1,3}\cup H_{2,3})\setminus B_{M^\beta}(q)}
\frac{\chi_{E^c_M}(y)- \chi_{E_M}(y)}{|q-y|^{2+2s}}\,dy =0.\end{equation}
Finally, we point out that~$H_{1,2}\cup H_{2,2}\subseteq E_M^c$
and (recalling~\eqref{q2whee} and~\eqref{q2whee-pre}) that
$$ B_{M^\beta}(q)\subseteq\{ x_2<q_2+M^\beta\}
\subseteq\{ x_2< t+3M^\beta\} \subseteq\{ x_2<M\}.$$
Therefore
\begin{equation}
\label{POljU}\begin{split}
& \int_{ (H_{1,2}\cup H_{2,2})\setminus B_{M^\beta}(q)}
\frac{\chi_{E^c_M}(y)- \chi_{E_M}(y)}{|q-y|^{2+2s}}\,dy
= \int_{ H_{1,2}\cup H_{2,2}} \frac{dy}{|q-y|^{2+2s}} \\
&\qquad\ge
\int_{ \{y_1-q_1\in (16,16+M),\; y_2\in (M,2q_2+M)} \frac{dy}{|q-y|^{2+2s}}
.\end{split}\end{equation}
Now we observe that if~$y_1-q_1\in (16,16+M)$ and~$y_2\in (M,2q_2+M)$,
then~$|q-y|\le C M$, for some~$C>0$. Then~\eqref{POljU} implies that
$$ \int_{ (H_{1,2}\cup H_{2,2})\setminus B_{M^\beta}(q)}
\frac{\chi_{E^c_M}(y)- \chi_{E_M}(y)}{|q-y|^{2+2s}}\,dy
\ge c q_2 M^{-1-2s} ,$$
for some~$c>0$.
As a consequence of~\eqref{q2whee}
and~\eqref{q2whee-pre}, we also know that~$q_2\ge t- 2 M^\beta
\ge (C_o-2)M^\beta\ge C_o M^\beta/2$, if~$C_o$ is taken suitably large.
Hence we obtain
\begin{equation}\label{po09uJ}
\int_{ (H_{1,2}\cup H_{2,2})\setminus B_{M^\beta}(q)}
\frac{\chi_{E^c_M}(y)- \chi_{E_M}(y)}{|q-y|^{2+2s}}\,dy \ge c C_o
M^{\beta-1-2s},\end{equation}
up to renaming~$c>0$.
Now we observe that
$$ \beta-1-2s = \frac{(1+2s) (1-2-2s)}{2+2s} = -\beta(1+2s),$$
so we can write~\eqref{po09uJ} as
\begin{equation*}
\int_{ (H_{1,2}\cup H_{2,2})\setminus B_{M^\beta}(q)}
\frac{\chi_{E^c_M}(y)- \chi_{E_M}(y)}{|q-y|^{2+2s}}\,dy \ge c C_o
M^{-\beta(1+2s)}.\end{equation*}
This, together with~\eqref{89=001=E:L-X1}
and~\eqref{89=001=E:L-X2}, gives that
$$ \int_{ \{|x_1-q_1|> 16\} \setminus B_{M^\beta}(q)}
\frac{\chi_{E^c_M}(y)- \chi_{E_M}(y)}{|q-y|^{2+2s}}\,dy
\ge c C_o M^{-\beta(1+2s)}.$$
By comparing this inequality with~\eqref{89=001=E:L-3}, we obtain that
$$ c C_o M^{-\beta(1+2s)}
\le C M^{-\beta(1+2s)},$$
which is a contradiction if~$C_o$ is large enough.
This completes the proof of Proposition~\ref{CYL-RAFF}.
\end{proof}

As a counterpart of Proposition~\ref{CYL-RAFF}, we show that
the stickiness to the boundary of the domain does not
get too close to the origin, as next result points out:

\begin{proposition}\label{PR-3.2}
In the setting of Proposition~\ref{CYL-RAFF}, suppose that
\begin{equation}\label{INCL-1}
[-1,1)\times [b M^{\frac{1+2s}{2+2s}},\,M]\subseteq E_M^c,\end{equation}
with~$p=(1,b M^{\frac{1+2s}{2+2s}})\in\partial E_M$,
for some~$b\ge0$.
Then~$b\ge C_o$,
for some~$C_o>0$, only depending on~$s$,
provided that~$M$ is large enough.
\end{proposition}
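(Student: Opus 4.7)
I would argue by contradiction, following the scheme of the proof of Proposition~\ref{CYL-RAFF} but with the barrier touching from inside $E_M$ rather than from $E_M^c$. Suppose that $b<C_o$ for a small constant $C_o>0$ to be fixed at the end.

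The first step is to construct an inside-touch barrier exploiting the datum $J^+_M=[1,\infty)\times(-\infty,M)\subseteq E_M$. Consider the flat thin disk
\begin{equation*}
S:=B_{M^{2\beta}}\bigl(1+M^{2\beta},\,bM^\beta\bigr)\cap\bigl\{|x_2-bM^\beta|<4M^\beta\bigr\},
\end{equation*}
where $\beta=\frac{1+2s}{2+2s}$. For $M$ large we have $bM^\beta+4M^\beta<M$, hence $S\subseteq J^+_M\subseteq E_M$; moreover $p\in(\partial S)\cap(\partial E_M)$ by construction. The viscosity Euler-Lagrange inequality applied to this inside touching (Theorem~5.1 of~\cite{CRS}, with reversed sign for interior touching) then gives
\begin{equation*}
\int_{\R^2}\frac{\chi_{E^c_M}(y)-\chi_{E_M}(y)}{|p-y|^{2+2s}}\,dy\ge 0. \qquad (*)
\end{equation*}

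Next, I would estimate the left-hand side of $(*)$ via the same four-region decomposition used in Proposition~\ref{CYL-RAFF}, now centered at $p$: the ball $B_{M^\beta}(p)$, the transitional region $\{|y_1-1|\le 16\}\setminus B_{M^\beta}(p)$, and the far-field strips $H_{i,j}$ for $i\in\{1,2\}$, $j\in\{1,2,3,4\}$. Reflecting $S$ through $p$ produces the disk $S':=B_{M^{2\beta}}(1-M^{2\beta},\,bM^\beta)\cap\{|x_2-bM^\beta|<4M^\beta\}$. The hypothesis $[-1,1)\times[bM^\beta,M]\subseteq E_M^c$ together with the datum on $\{y_1\le -1,\,y_2\ge -M\}\subseteq E_M^c$ ensure that the upper half $S'\cap\{x_2>bM^\beta\}$ lies in $E_M^c$; hence, by symmetry of the kernel around $p$, it cancels the $E_M$-contribution of the upper half of $S\cap B_{M^\beta}(p)$ modulo an annular error $O(M^{-\beta(1+2s)})$ controlled by Lemma~3.1 of~\cite{nostro}. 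In the far field, the strips $H_{i,1}\cup H_{i,4}$ cancel by up/down symmetry about $\{x_2=bM^\beta\}$, the strips $H_{i,3}$ cancel by the left/right symmetry of the datum, and the strips $H_{i,2}$ contribute $+C\,b\,M^{-\beta(1+2s)}$ as in Proposition~\ref{CYL-RAFF}. A careful analysis of the lower half of $S'$, which straddles the sticky region below $p$ and is partly contained in $E_M$ by the symmetric analogue of Proposition~\ref{CYL-RAFF} applied at $-p\in\partial E_M$, then produces a universal negative contribution of order $-c_*\,M^{-\beta(1+2s)}$ for some $c_*>0$ depending only on $s$.

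Combining all the estimates,
\begin{equation*}
0\le\int_{\R^2}\frac{\chi_{E^c_M}(y)-\chi_{E_M}(y)}{|p-y|^{2+2s}}\,dy\le (Cb-c_*)\,M^{-\beta(1+2s)},
\end{equation*}
so that $b\ge c_*/C=:C_o>0$, contradicting our assumption. The main technical obstacle I anticipate is the rigorous justification of the inside-touch viscosity Euler-Lagrange inequality at the point $p\in\partial\Omega$, which lies on the wall of the domain, together with the quantitative identification of the universal negative constant $c_*$ through a delicate reflection argument that carefully tracks the straddling of $S'$ over the sticky region beneath $p$ and exploits the $C^{1,\frac{1}{2}+s}$ detachment of $\partial E_M$ at $p$.
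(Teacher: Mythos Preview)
Your proposal has the right global structure---Euler--Lagrange at $p$ touched from the $E_M$-side, odd symmetry to access the sticky region $(-1,1]\times[-M,-bM^\beta]\subseteq E_M$, and a balance of contributions---but there is a genuine quantitative gap in your balance of scales.

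You claim that the portion of the reflected barrier $S'$ lying in $E_M$ yields a ``universal negative contribution of order $-c_*M^{-\beta(1+2s)}$'', and you then conclude from $0\le(Cb-c_*)M^{-\beta(1+2s)}$. But every other term in your decomposition (the annular error from Lemma~3.1 of~\cite{nostro}, the transitional strip $\{|y_1-1|\le 16\}\setminus B_{M^\beta}(p)$, even the near-field symmetry remainder) is \emph{also} of order $M^{-\beta(1+2s)}$ with some universal but uncontrolled constant. So your inequality cannot be closed: you would need $c_*$ to exceed the sum of all error constants, and nothing in the argument guarantees this.

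The point you are missing is that the contribution from the sticky strip $(-1,1)\times[-M,-bM^\beta]\subseteq E_M$ is not of order $M^{-\beta(1+2s)}$: it scales like
\[
\int_{-1}^{1}\int_{-bM^\beta-\text{const}}^{-bM^\beta}\frac{dy_2\,dy_1}{|p-y|^{2+2s}}\;\gtrsim\;\frac{c}{(2bM^\beta+1)^{1+2s}},
\]
which for small $b$ is \emph{much larger} than $M^{-\beta(1+2s)}$ (indeed, of order~$1$ if $bM^\beta$ stays bounded). This separation of scales is precisely what allows the error terms to be absorbed. The paper's proof makes this transparent by abandoning the $B_{M^\beta}(p)$/far-field decomposition of Proposition~\ref{CYL-RAFF} altogether and instead reflecting a large cube $Q=(1,1+2L)\times(M-2L,M)\subseteq E_M$ (with $L=M-bM^\beta$) across $\{x_1=1\}$; the reflected cube $Q'$ contains the nearby region $G=(-1,1)\times(-3bM^\beta-2,-bM^\beta-1)\subseteq E_M$, whose contribution $\frac{C}{(2bM^\beta+1)^{1+2s}}$ is isolated directly by symmetry. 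The remaining exterior is handled by a single strip $H=(-\infty,-1)\times(-M,M-2L)$ contributing only $O(M^{-\beta(1+2s)})$, and the contradiction $\frac{C}{(2bM^\beta+1)^{1+2s}}\le CM^{-\beta(1+2s)}$ immediately forces $b\ge C_o$.

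A minor additional slip: in your symmetry step you pair the \emph{upper} half of $S'$ with the \emph{upper} half of $S$, but reflection through $p$ sends the upper half of $S$ to the \emph{lower} half of $S'$; the cancellation still works, but the bookkeeping is reversed.
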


\begin{proof} For short, we set~$\beta:={\frac{1+2s}{2+2s}}$.
We remark that
\begin{equation}\label{8u-BETA}
1-\frac{\beta}{1+2s} = 1-\frac{1}{2+2s}=\beta.
\end{equation}
We argue by contradiction, supposing that
\begin{equation}\label{78UI-CON}
b\le C_o 
\end{equation}
for some~$C_o\in(0,1)$ that we can take conveniently small in the sequel.
By Lemma~\ref{SYMM-LEMMA} (used here with~$T(x):=-x$),
we have that~$E_M$ is odd with respect to the origin.
This and~\eqref{INCL-1} give that
\begin{equation}\label{INCL-2}
(-1,1]\times [-M,\,-b M^{\beta}]\subseteq E_M.\end{equation}
Now we let~$L:= M-b M^{\beta}$ and
we consider the cube~$Q$ of side~$2L$ that has the point~$p$
on its left side, namely
$$ Q:= (1,1+2L)\times (M-2L,M).$$
Notice that
\begin{equation}\label{PO09GH}
Q\subseteq E_M, \end{equation}
by the boundary datum of the problem.
We also take the symmetric reflection of~$Q$ with
respect to~$\{ x_1=1\}$, that is we set
$$ Q':= (1-2L,1)\times (M-2L,M).$$
We also set
$$ G:= (-1,1)\times (-3b M^{\beta}-2,\,-b M^{\beta}-1).$$
We claim that
\begin{equation}\label{INCL-3}
G\subseteq Q'.\end{equation}
Indeed, if~$x_1\in (-1,1)$
and~$x_2\in (-3b M^{\beta}-2,\,-b M^{\beta}-1)$,
then
$$ 1-2L =1-2M+2b M^{\beta} \le 1-2M+2M^{\beta}< -1<x_1,$$
since~$M$ is large. Also
$$ M-2L = -M+2b M^{\beta}
< -3b M^{\beta}-2 < x_2,$$
using again that~$M$ is large.
Accordingly, $x_1\in (1-2L,1)$ and~$x_2\in(M-2L,M)$,
which proves~\eqref{INCL-3}.

Now we claim that
\begin{equation}\label{INCL-4}
G\subseteq (-1,1]\times [-M,\,-b M^{\beta}].\end{equation}
Indeed, if~$x_2\in
(-3b M^{\beta}-2,\,-b M^{\beta}-1)$, then
$$ -M < -3 M^{\beta}-2
\le -3b M^{\beta}-2<x_2,$$
for large~$M$, and so~$x_2\in[-M,\,-b M^{\beta}]$,
which proves~\eqref{INCL-4}.

{F}rom~\eqref{INCL-2}, \eqref{INCL-3} and~\eqref{INCL-4}, we obtain that
\begin{equation*}
G\subseteq Q'\cap E_M.\end{equation*}
Using this and~\eqref{PO09GH}, by a symmetry argument we conclude that
\begin{equation}\label{INCL-7}
\int_{Q\cup Q'} \frac{\chi_{E_M}(y)- \chi_{E_M^c}(y)}{|p-y|^{2+2s}}\,dy
\ge \int_G  \frac{dy}{|p-y|^{2+2s}}.
\end{equation}
Now we recall that~$p=(p_1,p_2)=(1,b M^{\beta})$ and we
observe that if~$y\in G$ then
\begin{eqnarray*}
&& |p_2-y_2| = |b M^{\beta}-y_2|\ge
|y_2|-b M^{\beta}
\\&&\qquad
\ge b M^{\beta}+1-b M^{\beta}=1\ge 
\frac{|p_1|+|y_1|}{2} \ge \frac{|p_1-y_1|}{2}.
\end{eqnarray*}
Hence, $|p-y|\le C |p_2-y_2|$, for some~$C>0$ and thus~\eqref{INCL-7}
and the substitution~$t:=p_2-y_2$ give
\begin{equation}\label{INCL-77}
\begin{split}
& \int_{Q\cup Q'} \frac{\chi_{E_M}(y)- \chi_{E_M^c}(y)}{|p-y|^{2+2s}}\,dy
\ge C \int_G  \frac{dy}{|p_2-y_2|^{2+2s}}
\\ &\qquad= C\int_{-3b M^{\beta}-2}^{-b M^{\beta}-1}
\frac{dy_2}{|p_2-y_2|^{2+2s}}
= C \int_{2b M^{\beta}+1}^{4b M^{\beta}+2}
\frac{dt}{t^{2+2s}} =\frac{C}{(2b M^{\beta}+1)^{1+2s}},
\end{split}
\end{equation}
up to renaming~$C$.

\begin{figure}
    \centering
    \includegraphics[height=5.9cm]{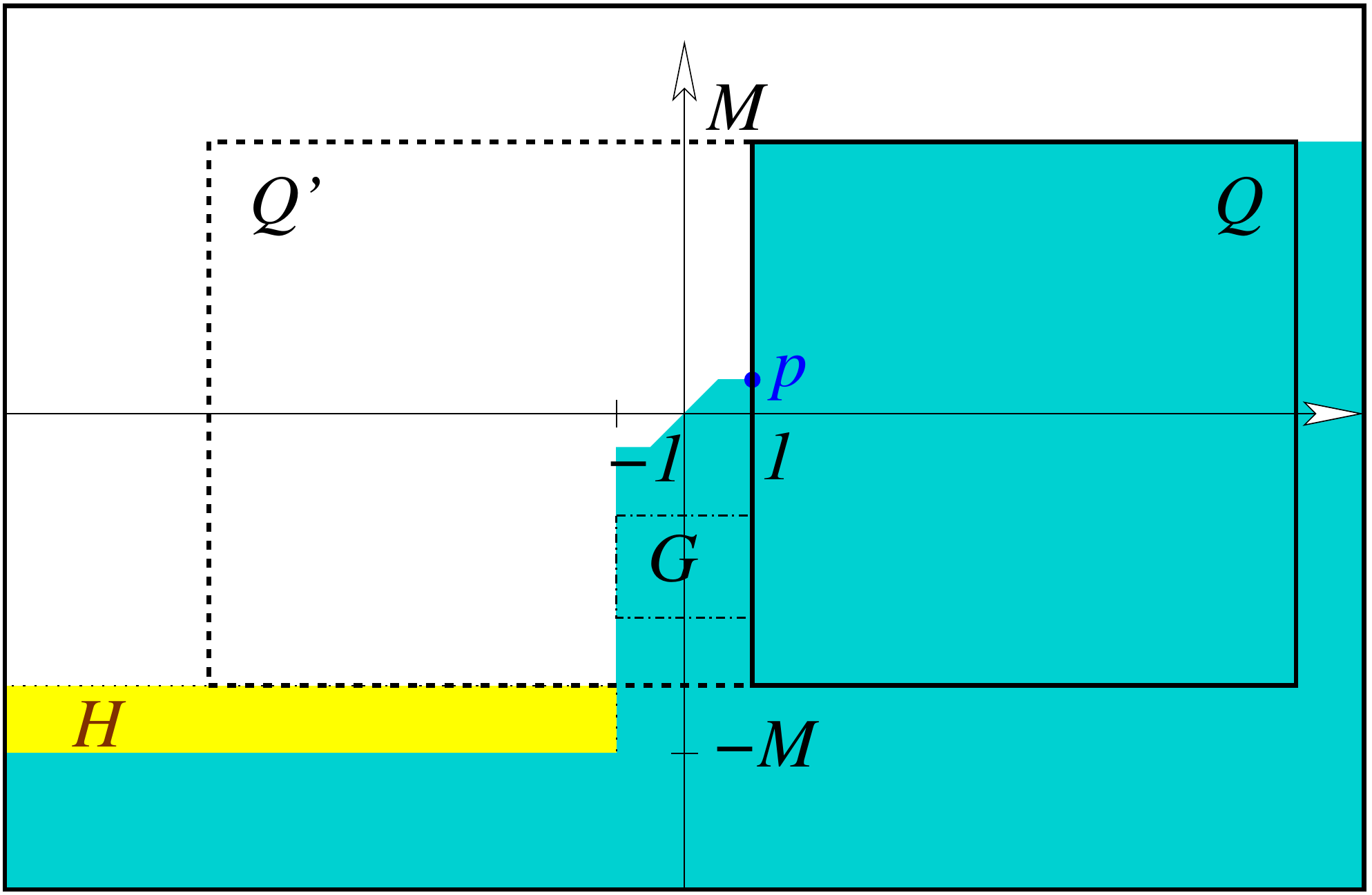}
    \caption{The geometry involved in the proof of Proposition~\ref{PR-3.2}.}
    \label{HBQA}
\end{figure}

Now we define
$$ H:=(-\infty,-1)\times (-M,M-2L),$$
see Figure~\ref{HBQA}. By construction, $H\subseteq E_M^c$.
We notice that the portion on the right of~$Q$ all belongs to~$E_M$,
while the portion on the left of~$Q'$ all belongs to~$E_M^c$,
that is
\begin{eqnarray*}
&&(-\infty,1-2L)\times(M-2L,M)\subseteq E_M^c\\
{\mbox{and }}&&(1+2L,+\infty)\times(M-2L,M)\subseteq E_M.
\end{eqnarray*}
Therefore, by symmetry, these contributions cancel and we have
\begin{equation}\label{INCL-889}
\int_{\R^2\setminus(Q\cup Q')}
\frac{\chi_{E_M}(y)- \chi_{E_M^c}(y)}{|p-y|^{2+2s}}\,dy
=
\int_{\{x_2>M\}\cup\{x_2<M-2L\}}
\frac{\chi_{E_M}(y)- \chi_{E_M^c}(y)}{|p-y|^{2+2s}}\,dy.
\end{equation}
Now we observe that~$\{x_2>M\}\subseteq E_M^c$
and~$\{x_2<M-2L\}\setminus H\subseteq E_M$, therefore, by symmetry,
\begin{equation}\label{INCL-889-B}
\int_{\{x_2>M\}\cup\{x_2<M-2L\}}
\frac{\chi_{E_M}(y)- \chi_{E_M^c}(y)}{|p-y|^{2+2s}}\,dy
=- 2 \int_{H}
\frac{dy}{|p-y|^{2+2s}}.\end{equation}
Now we observe that if~$y\in H$ then~$|y_2|\ge 2L-M$
and so
$$|y_2-p_2|\ge 2L-M-bM^\beta=M-3bM^\beta\ge M-3M^\beta\ge\frac{M}{2}$$
if~$M$ is large enough. Therefore
\begin{eqnarray*}
&& \int_{H} \frac{dy}{|p-y|^{2+2s}}\le C\int_{-\infty}^{1}
\left(\int_{-M}^{M-2L} \frac{dy_2}{\big( |p_1-y_1|^2 + M^2\big)^{\frac{2+2s}{2}}}
\right)\,dy_1 \\
&&\qquad=
C\,(M-L)\,\int_{-\infty}^{1}
\frac{dy_1}{\big( |1-y_1|^2 + M^2\big)^{\frac{2+2s}{2}}}
\\ &&\qquad\le
C\,(M-L)\,\left(
\int_{-\infty}^{-M}
\frac{dy_1}{|1-y_1|^{2+2s} } +
\int_{-M}^{1}
\frac{dy_1}{M^{2+2s}}
\right) \\
&&\qquad\le
C\,(M-L)\,M^{-1-2s} = C b M^{\beta-1-2s} \le CM^{\beta-1-2s}
,\end{eqnarray*}
for some~$C>0$ (possibly varying from line to line).
Using this, \eqref{INCL-889} and~\eqref{INCL-889-B},
we obtain that
\begin{equation}\label{INCL-8}
\int_{\R^2\setminus(Q\cup Q')}
\frac{\chi_{E_M}(y)- \chi_{E_M^c}(y)}{|p-y|^{2+2s}}\,dy
=
- 2 \int_{H}
\frac{dy}{|p-y|^{2+2s}}\ge -CM^{\beta-1-2s},\end{equation}
up to renaming~$C$.

Now we use the 
Euler-Lagrange equation in the viscosity sense at~$p$
and we obtain that
$$ \int_{\R^2}
\frac{\chi_{E_M}(y)- \chi_{E_M^c}(y)}{|p-y|^{2+2s}}\,dy\le0.$$
Combining this with~\eqref{INCL-77} and~\eqref{INCL-8}, we obtain
$$ 0\ge \frac{C}{(2b M^{\beta}+1)^{1+2s}}
-CM^{\beta-1-2s}.$$
That is, up to renaming constants,
$$ (2b M^{\beta}+1)^{1+2s}\ge c_* M^{1+2s-\beta},$$
for some~$c_*>0$. Using this
and~\eqref{8u-BETA}, we conclude that
$$ 2b M^{\beta}+1 \ge c_*^{\frac{1}{1+2s}} M^{1-\frac{\beta}{1+2s}} 
=c_o M^{\beta}.$$
Now we multiply by~$M^{-\beta}$
and we take~$M$ large enough, such that~$M^{-\beta}\le c_o/2$,
so we obtain
$$ 2b \ge -M^{-\beta} + c_o \ge \frac{c_o}{2}.$$
This is in contradiction with~\eqref{78UI-CON},
if we choose~$C_o$ small enough.
\end{proof}

As a combination of Propositions~\ref{CYL-RAFF}
and~\ref{PR-3.2}, we have the optimal statement
in Theorem~\ref{CYL-THEOR}.

\section{Stickiness as~$s\to0^+$}\label{SEC:to0}

This section contains the asymptotic properties as~$s\to0$
and the proof of Theorem \ref{SECTOR}.
For this, we recall that~$\Sigma$ has been defined in~\eqref{SECTOR-3456fgg}
as
$$ \Sigma := \{ (x,y)\in\R^2\setminus B_1 {\mbox{ s.t. }}
x>0 {\mbox{ and }} y>0\}$$
and~$E_s$
is the $s$-minimizer in~$B_1$
with datum~$\Sigma$ outside~$B_1$.

\begin{proof}[Proof of Theorem \ref{SECTOR}] First, we show that
\begin{equation}\label{UI}
E_s \subseteq \{ x+y=1\}.
\end{equation}
To prove it, we slide the half-plane~$h_t:=\{ x+y\le t\}$.
If~$t\le -3$, we have that~$h_t$ lies below~$\Sigma\cup B_1$
and so~$h_t\subseteq E_s^c$. Then we increase~$t$
until~$h_{t^*}$ intersects~$E_s$, with~$t_*\in [-3,1]$.
Notice that~\eqref{UI} is proved if we show that
\begin{equation}\label{UI2}
t_*=1.
\end{equation}
We prove this arguing by contradiction.
If not, there exists~$p\in B_1\cap (\partial E_s)\cap \{ x+y=t_*\}$.
Hence, using the
Euler-Lagrange equation in the viscosity sense
(see Theorem~5.1 in~\cite{CRS})
and the fact that~$h_{t_*}\subseteq E_s^c$, we obtain
$$ 0\ge
\int_{\R^2} \frac{\chi_{E^c_s}(y)- \chi_{E_s}(y)}{|p-y|^{2+2s}}\,dy
\ge
\int_{\R^2} \frac{\chi_{h_{t_*}}(y)- \chi_{h_{t_*}^c}(y)}{|p-y|^{2+2s}}\,dy
=0.$$
This shows that~$h_{t_*}$ must coincide with~$E_s^c$.
This is impossible, since~$E_s$ is not a half-plane outside~$B_1$.
Hence, we have proved~\eqref{UI2} and so~\eqref{UI}.

By~\eqref{UI}, we get that~$B_{\sqrt{2}/2}\subseteq E_s^c$.
So we can enlarge~$r\in [\sqrt{2}/2,1]$ till~$B_r$ touches~$E_s$.
We remark that Theorem~\ref{SECTOR} is proved
if we show that this touching property only occurs at~$r=1$.

Thus, we argue by contradiction and we suppose that there exists
\begin{equation}\label{r-below}
r\in[\sqrt{2}/2,1)\end{equation}
such that~$B_r\subseteq E_s^c$
and there exists~$q\in (\partial B_r)\cap(\partial E_s)$.
Then, by the Euler-Lagrange equation, we have that
\begin{equation}\label{ES-SEC-1}
\int_{\R^2} \frac{\chi_{E^c_s}(y)- \chi_{E_s}(y)}{|q-y|^{2+2s}}\,dy
\le0.\end{equation}
By construction,
\begin{equation}\label{ES-SEC-2}
E_s \subseteq 
\{ (x,y)\in\R^2\setminus B_r {\mbox{ s.t. }}
x>0 {\mbox{ and }} y>0\}.\end{equation}
Also, $0<q_1,q_2<1$.
Then we consider the translation by~$q$: namely we define~$F_s:=E_s-q$.
It follows from~\eqref{ES-SEC-2} that
\begin{equation}\label{ES-SEC-3}
F_s \subseteq
\{ (x,y)\in\R^2\setminus B_r(-q) {\mbox{ s.t. }}
x>-1 {\mbox{ and }} y>-1\}.\end{equation}
Also, by~\eqref{ES-SEC-1},
\begin{equation}\label{ES-SEC-4}
\int_{\R^2} \frac{\chi_{F^c_s}(y)- \chi_{F_s}(y)}{|y|^{2+2s}}\,dy
\le0.\end{equation}
Now we define~$D_r:=B_r(q)\cup B_r(-q)$ and we let
$K_r$ be the convex hull of~$D_r$.
Notice that
\begin{equation}\label{BALL_K}
B_r\subseteq K_r.\end{equation} We also define~$P_r:= K_r\setminus D_r$.
Since~$B_r(-q)\subseteq F_s^c$, by symmetry we obtain that
\begin{equation}\label{ES-SEC-6}
\int_{D_r} \frac{\chi_{F^c_s}(y)- \chi_{F_s}(y)}{|y|^{2+2s}}\,dy
\ge0.\end{equation}
Moreover
(see Lemma~3.1 in~\cite{nostro},
used here with~$\lambda:=1$) and~\eqref{r-below},
\begin{equation*}
\left|
\int_{P_r} \frac{\chi_{F^c_s}(y)- \chi_{F_s}(y)}{|y|^{2+2s}}\,dy\right|
\le 
\frac{C_1 r^{-2s} }{1-2s}\le \frac{C_2}{1-2s},
\end{equation*}
for suitable positive constants~$C_1$ and~$C_2$ that do not depend on~$s$.
Using this, \eqref{ES-SEC-4} and~\eqref{ES-SEC-6}
we obtain that
\begin{equation}\label{ES-SEC-10}
\begin{split}
0\,&\ge \int_{\R^2\setminus K_r}
\frac{\chi_{F^c_s}(y)- \chi_{F_s}(y)}{|y|^{2+2s}}\,dy
+\int_{D_r}
\frac{\chi_{F^c_s}(y)- \chi_{F_s}(y)}{|y|^{2+2s}}\,dy
+
\int_{P_r}
\frac{\chi_{F^c_s}(y)- \chi_{F_s}(y)}{|y|^{2+2s}}\,dy
\\ &\ge
\int_{\R^2\setminus K_r}
\frac{\chi_{F^c_s}(y)- \chi_{F_s}(y)}{|y|^{2+2s}}\,dy
-\frac{C_2}{1-2s}.\end{split}\end{equation}
Moreover, recalling~\eqref{BALL_K}
(and using again~\eqref{r-below}), we have that
$$ \left|\int_{B_{2r}\setminus K_r}
\frac{\chi_{F^c_s}(y)- \chi_{F_s}(y)}{|y|^{2+2s}}\,dy\right|\le
\int_{B_{2r}\setminus B_r}
\frac{dy}{|y|^{2+2s}}\le C_3,$$
for some~$C_3>0$ that does not depend on~$s$. Hence~\eqref{ES-SEC-10}
gives
\begin{equation}\label{ES-SEC-11}
0\ge 
\int_{\R^2\setminus B_{2r}}
\frac{\chi_{F^c_s}(y)- \chi_{F_s}(y)}{|y|^{2+2s}}\,dy
-C_3-\frac{C_2}{1-2s}.\end{equation}
Now we observe that~$B_r(-q)\subseteq B_{2r}$, since~$|q|=r$.
Consequently, recalling~\eqref{ES-SEC-3},
\begin{equation*}
F_s \setminus B_{2r} \subseteq
\{ (x,y)\in\R^2 \setminus B_{2r}
{\mbox{ s.t. }}
x>-1 {\mbox{ and }} y>-1\}.\end{equation*}
That is,~$F_s \setminus B_{2r} \subseteq A_1\cup A_2\cup A_3$, where
\begin{eqnarray*}
&& A_1 :=
\{ (x,y)\in\R^2 \setminus B_{2r}
{\mbox{ s.t. }} x>0 {\mbox{ and }} y\in(-1,1)\},\\
&& A_2 :=
\{ (x,y)\in\R^2 \setminus B_{2r}
{\mbox{ s.t. }} x\in(-1,1) {\mbox{ and }} y>0\}\\
{\mbox{and }}&& A_3:=
\{ (x,y)\in\R^2 \setminus B_{2r}
{\mbox{ s.t. }} x\ge1 {\mbox{ and }} y\ge1\}.
\end{eqnarray*}

\begin{figure}
    \centering
    \includegraphics[height=5.9cm]{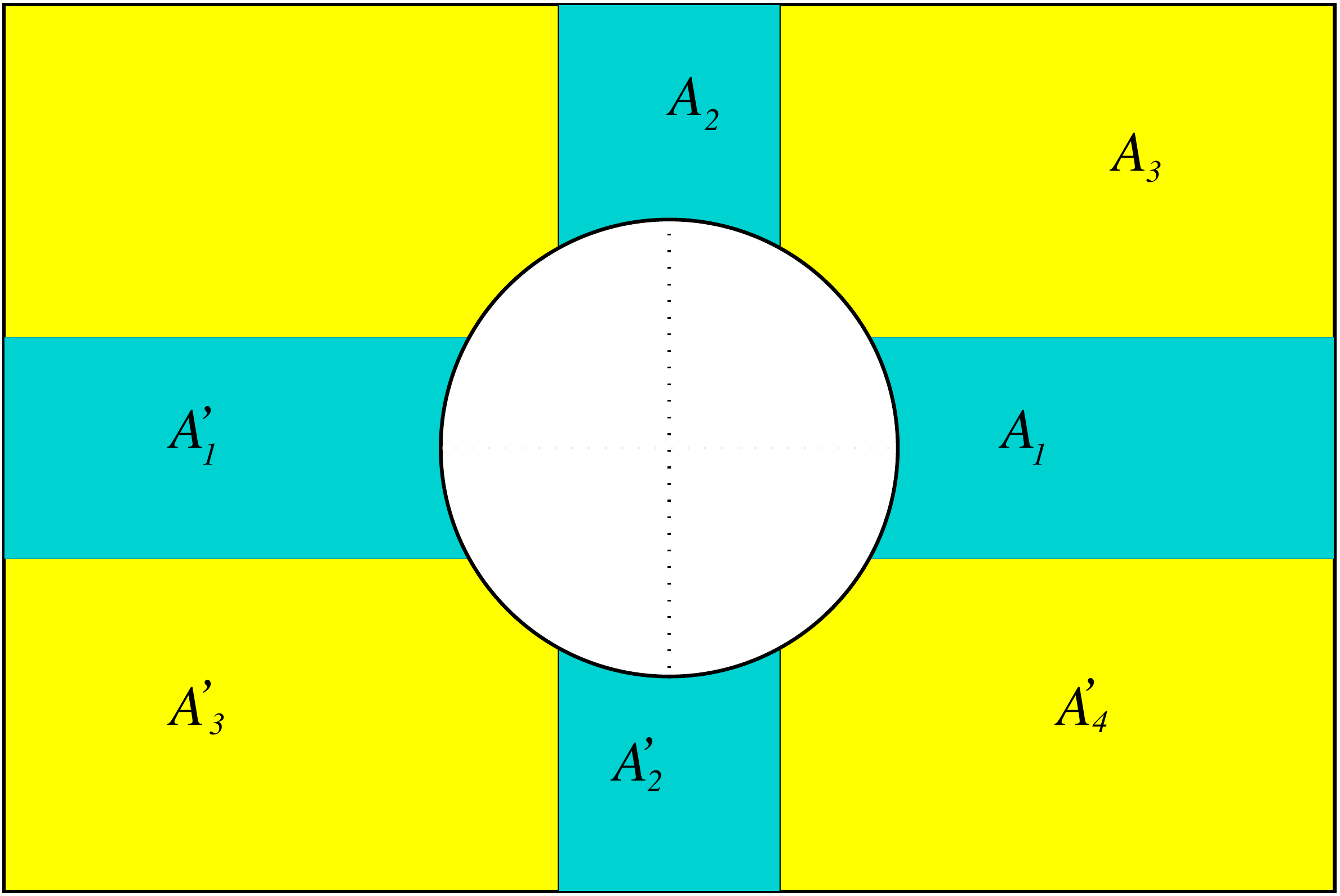}
    \caption{
The partition of the plane needed for the proof
of Theorem~\ref{SECTOR}.}
    \label{SIMP}
\end{figure}

On the other hand,~$F_s^c \setminus B_{2r} \supseteq A_1'\cup
A_2'\cup A_3'\cup A_4'$, where
\begin{eqnarray*}
&& A_1' :=
\{ (x,y)\in\R^2 \setminus B_{2r}
{\mbox{ s.t. }} x<0 {\mbox{ and }} y\in(-1,1)\},\\
&& A_2' :=
\{ (x,y)\in\R^2 \setminus B_{2r}
{\mbox{ s.t. }} x\in(-1,1) {\mbox{ and }} y<0\},\\
&& A_3' :=
\{ (x,y)\in\R^2 \setminus B_{2r}
{\mbox{ s.t. }} x\le-1 {\mbox{ and }} y\le-1\},\\
{\mbox{and }}&& A_4':=
\{ (x,y)\in\R^2 \setminus B_{2r}
{\mbox{ s.t. }} x\ge1 {\mbox{ and }} y\le-1\},
\end{eqnarray*}
see Figure~\ref{SIMP}.
After simplifying~$A_1$ with~$A_1'$, $A_2$ with~$A_2'$
and~$A_3$ with~$A_3'$, we obtain
\begin{equation}\label{78:90P}
\int_{\R^2\setminus B_{2r}}
\frac{\chi_{F^c_s}(y)- \chi_{F_s}(y)}{|y|^{2+2s}}\,dy
\ge \int_{A'_4} \frac{dy}{|y|^{2+2s}}.\end{equation}
Notice now that~$A'_4$ contains a cone with positive
constant opening with vertex at the origin,
therefore
$$ \int_{A'_4} \frac{dy}{|y|^{2+2s}}\ge
c_1 \int_{2r}^{+\infty} \frac{d\rho}{\rho^{1+2s}}
= \frac{c_2}{s\,r^{2s}} \ge \frac{c_3}{s},$$
where we have used again~\eqref{r-below}, and the positive
constants~$c_1$, $c_2$ and~$c_3$ do not depend on~$s$.
The latter estimate and~\eqref{78:90P} give that
$$ \int_{\R^2\setminus B_{2r}}
\frac{\chi_{F^c_s}(y)- \chi_{F_s}(y)}{|y|^{2+2s}}\,dy
\ge \frac{c_3}{s}.$$
Therefore, recalling~\eqref{ES-SEC-11},
$$ 0\ge \frac{c_3}{s}
-C_3-\frac{C_2}{1-2s}.$$
This is a contradiction if~$s\in(0,s_o)$ and~$s_o$
is small enough. Hence, we have completed the proof of
Theorem~\ref{SECTOR}.
\end{proof}

\section{Construction of barriers that are piecewise linear}\label{SEC:BAR:PTWL}

This part of the paper is devoted to the proof of
Theorem~\ref{UNS}.
The argument will rely on the construction of a series
of barriers, and the proof of Theorem~\ref{UNS}
will be completed in Section~\ref{INST:SEC}.

In this section, we construct barriers in the plane,
which are subsolutions of the fractional curvature equation when~$\{x_1>0\}$,
which possess a ``vertical'' portion along~$\{x_1=0\}$ and
which are built by joining linear functions
whose slope becomes arbitrarily close to being horizontal
(a precise statement will be given in Proposition~\ref{IT:BARR}).
For this scope,
we start with a simple
auxiliary observation to bound explicitly from below the
fractional curvature of an angle:

\begin{lemma}\label{ANGLE}
Let~$\ell\ge0$,
\begin{eqnarray*}
&&E_1:= (-\infty,0]\times (-\infty,0)\\
&& E_2:= \{ \ell x_2-x_1<0 ,\quad x_1>0\}\\
{\mbox{and }}&& E:=E_1\cup E_2.
\end{eqnarray*}
Then, for any~$p=(p_1,p_2)\in\partial E$ with~$p_2>0$,
\begin{equation}\label{SCD:k}
\int_{\R^2} \frac{\chi_E(y)-\chi_{E^c}(y)}{|y-p|^{2+2s}}\,dy\ge
\frac{c(\ell)}{|p|^{2s}}, \end{equation}
for a suitable nonincreasing function~$c:[0,+\infty)\to(0,1)$.

More precisely, for large~$\ell$, one has that~$c(\ell)\sim \bar c\ell^{-1}$,
for some~$\bar c>0$.
\end{lemma}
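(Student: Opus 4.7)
My plan is to exploit the conical (dilation-invariant) structure of $E$ together with a symmetry cancellation along a suitable half-plane whose boundary passes through $p$. First I would introduce the half-plane $H:=\{x_2<x_1/\ell\}$ (with the convention $H:=\{x_1>0\}$ when $\ell=0$). Since $p$ satisfies $p_2=p_1/\ell$ with $p_1>0$, the boundary line of $H$ passes through the origin and also through $p$. A direct inspection shows that $E=H\sqcup T$ as a disjoint union, where
$$T:=\{x_1\le 0,\ x_1/\ell\le x_2<0\}$$
is the open planar wedge in the third quadrant, with opening angle $\arctan(1/\ell)$.

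Using this decomposition and the identity $\chi_E-\chi_{E^c}=(\chi_H-\chi_{H^c})+2\chi_T$, the half-plane contribution to~\eqref{SCD:k} vanishes in the principal-value sense: indeed, the reflection $R$ about the line $\partial H$ fixes $p$, preserves the kernel $dy/|y-p|^{2+2s}$ (because $|R(y)-p|=|y-p|$), and swaps $H$ with $H^c$. Therefore
$$\int_{\R^2}\frac{\chi_E(y)-\chi_{E^c}(y)}{|y-p|^{2+2s}}\,dy=2\int_T\frac{dy}{|y-p|^{2+2s}},$$
and the right-hand side is an absolutely convergent ordinary integral, since $p\notin\overline{T}$.

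Next I would use the scale-invariance of $T$. Setting $\tilde y:=y/|p|$ and $\hat p:=p/|p|$, a change of variables yields
$$\int_T\frac{dy}{|y-p|^{2+2s}}=\frac{1}{|p|^{2s}}\int_T\frac{d\tilde y}{|\tilde y-\hat p|^{2+2s}}.$$
To bound the remaining integral from below I would restrict to $T\cap B_1$: there $|\tilde y-\hat p|\le 2$, and polar coordinates give $|T\cap B_1|=\tfrac{1}{2}\arctan(1/\ell)$, whence the restricted integral is at least $\arctan(1/\ell)/2^{3+2s}$. This produces~\eqref{SCD:k} with, for instance, $c(\ell):=\arctan(1/\ell)/2^{2+2s}$, which is strictly positive, strictly smaller than $1$, strictly decreasing in $\ell$, and satisfies $c(\ell)\sim (2^{2+2s}\ell)^{-1}$ as $\ell\to+\infty$, confirming the last sentence of the statement.

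The only point requiring care is making the half-plane cancellation rigorous as a principal value (one has to symmetrize the region $\{|y-p|>\eps\}$ under the reflection $R$ before passing to the limit $\eps\to 0^+$); once this is done, the remaining estimate is direct since the wedge $T$ is uniformly separated from $p$.
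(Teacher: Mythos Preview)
Your proof is correct and follows essentially the same idea as the paper's: both arguments cancel the half-plane $H=\{\ell x_2<x_1\}$ by symmetry about the line through $p$ and then pick up the positive excess coming from the wedge $E\setminus H$ in the third quadrant. Your presentation is slightly more direct---you write the explicit decomposition $E=H\sqcup T$ and use the full sector $T\cap B_1$ of area $\tfrac12\arctan(1/\ell)$, whereas the paper phrases the cancellation via sliding slabs and uses the inscribed triangle with vertices $(0,0)$, $(-\cos\delta,-\sin\delta)$, $(-1,0)$ of area $\tfrac12\sin\delta$; both yield the same asymptotics $c(\ell)\sim\bar c\,\ell^{-1}$.
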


\begin{proof} Let~$\delta:=\arctan (1/\ell)\in \left(0,\frac{\pi}{2}\right]$.
By scaling, it is enough to prove~\eqref{SCD:k} when
\begin{equation}\label{SCD:k2}
|p|=4.\end{equation}

\begin{figure}
    \centering
    \includegraphics[height=5.9cm]{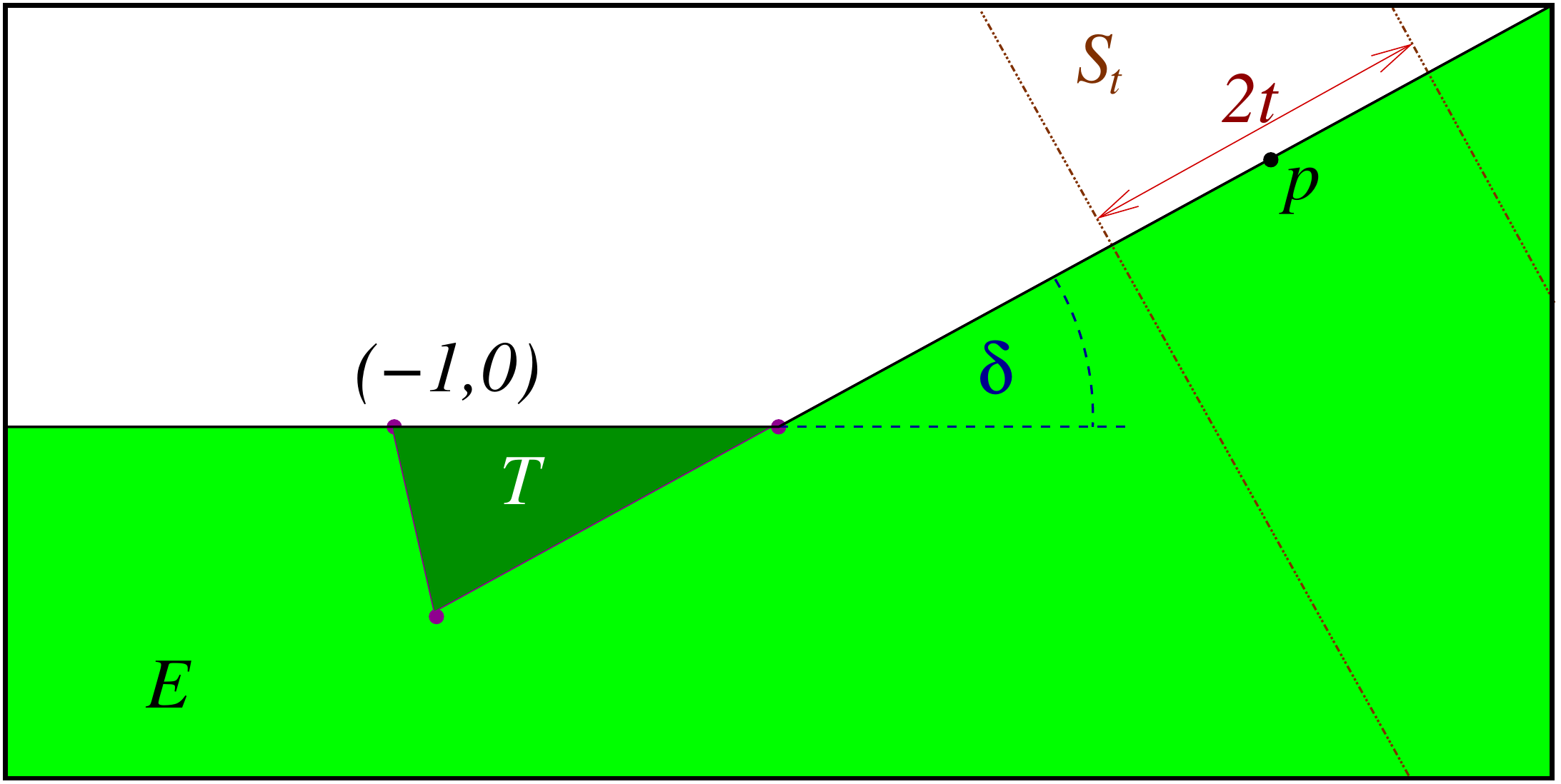}
    \caption{The proof of Lemma~\ref{ANGLE}.}
    \label{FsA}
\end{figure}

Now, for any~$t>0$, let~$S_t$ be the slab with boundary orthogonal
to the straight line~$\{ \ell x_2-x_1=0\}$
of width~$2t$, having~$p$ on its symmetry axis (see Figure~\ref{FsA}).
For small~$t$, the slab~$S_t$ does not contain the origin,
thus, the ``upper'' half of the slab is contained in~$E^c$,
while the ``lower'' half of the slab is contained in~$E$,
namely
\begin{equation*}
\int_{S_t} \frac{\chi_E(y)-\chi_{E^c}(y)}{|y-p|^{2+2s}}\,dy=0
.\end{equation*}
Enlarging~$t$,
the ``lower'' half of the slab is always contained in~$E$.
As for the ``upper'' half, we have that the triangle~$T$
with vertices~$(0,0)$, $(-\cos\delta,-\sin\delta)$, $(-1,0)$
lies in~$E$. Notice that
$$|T|=\frac{\sin\delta}{2}.$$
Also, if~$y\in T$ then~$|y|\le 2$ and so,
recalling~\eqref{SCD:k2},
$$ |y-p|\le |p|+2 \le 2|p|.$$
Consequently,
$$ \int_{T} \frac{\chi_E(y)-\chi_{E^c}(y)}{|y-p|^{2+2s}}\,dy
\ge \frac{ |T|}{ 2^{2+2s}\,|p|^{2+2s} }
= \frac{ \sin\delta }{ 2^{7+2s}\,|p|^{2s} },$$
which gives the desired result.
\end{proof}

The next result is the building block needed to
construct a barrier iteratively.
Roughly speaking, next result says that 
we can tilt a straight line towards infinity
by estimating precisely the effect of this modification
on the fractional curvature.

\begin{lemma}\label{PRE:BAR}
Let~$\ell\ge q\ge0$ and~$\delta:=\arctan (1/\ell)\in \left(0,\frac{\pi}{2}\right]$.
Let~$e:=(\ell-q,1)$.

Let~$\bar\tau\in C^\infty_0( B_1(e))$ with~$\bar\tau=1$ in~$B_{1/2}(e)$.

Let~$\tau_o\in C^\infty(\R)$ be such that
\begin{equation}
{\mbox{$\tau_o(t)=1$
if~$t\in \left[ \frac{\delta}{2},\frac{3\delta}{2}\right]$
and~$\tau_o(t)=0$ if~$t\in\R\setminus
\left[ \frac{\delta}{4},\frac{7\delta}{4}\right]$.}}\end{equation}
For any~$x\in\R^2$, let also~$\alpha(x)\in [0,2\pi)$ be
the angle between the vector~$x-e$ and the $x_1$-axis.
Let
\begin{equation}\label{PIOP11}
\tau(x):= \big(1-\bar\tau(x)\big)\,\tau_o\big( \alpha(x)\big).\end{equation}
For any~$\theta\in\R$, let~$R_\theta$ be the clockwise rotation by an angle~$\theta$, i.e.
$$ R_\theta(x)=R_\theta(x_1,x_2):=
\left(
\begin{matrix}
\cos\theta & \sin \theta\\
-\sin\theta & \cos\theta
\end{matrix}
\right) \,
\left(
\begin{matrix}
x_1\\ x_2 \end{matrix}
\right). $$
Let also
$$ \Psi_\theta(x):= R_{\tau(x)\theta} \,x.$$
Let~$E\subset\R^2$
be an epigraph such that
\begin{equation*}
\begin{split}
& E \cap \{x_1 < 0\} = (-\infty,0)\times(-\infty,0), \\
& E\supseteq \R\times (-\infty,0),\\
& E\cap \{x_2 >1\} = \{\ell x_2-x_1 -q <0\}\cap\{x_2 >1\}\\
{\mbox{and }}\;& E\cap \{x_1>\ell-q\} = \{\ell x_2-x_1 -q <0\}\cap\{x_1 >\ell-q\}.
\end{split}
\end{equation*}
Assume that, for any~$p\in \partial E\cap\{x_2>0\}$,
\begin{equation}\label{STI:STEp0}
\int_{\R^2} \frac{\chi_E(y)-\chi_{E^c}(y)}{|y-p|^{2+2s}}\,dy\ge
\frac{c}{|p|^{2s}}
\end{equation}
for some~$c\in(0,1)$.

Then, there exist nonincreasing functions~$\phi:[0,+\infty)\to
(0,1)$ and~$c_o:[0,+\infty)\to (0,c)$
such that for any~$\theta\in [0,\,\phi(\ell)]$ the following claim
holds true. Let~$F:=\Psi_\theta(E)$. Then, for any~$p\in (\partial F)
\cap\{x_2>0\}$,
\begin{equation}\label{VBhJ}
\int_{\R^2} \frac{\chi_F(y)-\chi_{F^c}(y)}{|y-p|^{2+2s}}\,dy\ge
\frac{c_o (\ell)}{|p|^{2s}}
.\end{equation}
More precisely, for large~$\ell$, one has that~$c_o(\ell)\sim
\bar c\min\{c,\ell^{-1}\}$,
for some~$\bar c>0$.
\end{lemma}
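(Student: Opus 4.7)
The plan is to estimate the fractional curvature of $F = \Psi_\theta(E)$ at an arbitrary boundary point $p \in \partial F \cap \{x_2 > 0\}$ by writing $p = \Psi_\theta(p_*)$ with $p_* \in \partial E$ and reducing to the hypothesis \eqref{STI:STEp0} on $E$ via change of variables. The essential structural fact is that $\Psi_\theta$ is a diffeomorphism of $\R^2$ which is $O(\theta)$-close to the identity, which equals the identity on $\overline{B_{1/2}(e)}$ and wherever $\tau_o(\alpha(\cdot))=0$, and which acts as the rigid rotation $R_\theta$ on the closed set $\{\tau = 1\}$; its Jacobian satisfies $|J_\theta - 1| \leq C\theta \|\nabla\tau\|_\infty$.

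Applying the substitution $y = \Psi_\theta(z)$ in the integral defining the fractional curvature of $F$ at $p$ yields
\[
\int_{\R^2} \frac{\chi_F(y) - \chi_{F^c}(y)}{|y - p|^{2+2s}}\, dy = \int_{\R^2} \frac{\chi_E(z) - \chi_{E^c}(z)}{|\Psi_\theta(z) - \Psi_\theta(p_*)|^{2+2s}}\, J_\theta(z)\, dz.
\]
I would split the right-hand side according to whether $z$ lies in $\{\tau = 1\}$, in $\{\tau = 0\}$, or in the transition region $\{0<\tau<1\}$ where $\nabla\tau \neq 0$. On $\{\tau = 1\}$, when $\tau(p_*)=1$, the kernel equals exactly $|z - p_*|^{-(2+2s)}$ since $\Psi_\theta$ acts as an isometry there; on $\{\tau = 0\}$ one compares $|z - \Psi_\theta(p_*)|$ directly with $|z - p_*|$; on the transition region the kernel differs from the original by a multiplicative factor $1 + C(\ell)\theta$, using that $\Psi_\theta$ is $C\theta$-Lipschitz close to the identity. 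Together with the Jacobian bound and hypothesis \eqref{STI:STEp0}, this gives
\[
\int_{\R^2} \frac{\chi_F(y) - \chi_{F^c}(y)}{|y - p|^{2+2s}}\, dy \ge \frac{c}{|p_*|^{2s}} - C(\ell)\,\theta \int_{\operatorname{supp}\nabla\tau} \frac{dz}{|z - p_*|^{2+2s}},
\]
and the quasi-isometry $|p|\sim|p_*|$ converts this into the corresponding estimate in $|p|^{2s}$.

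The main obstacle is controlling the error integral when $p_*$ sits close to $\operatorname{supp}\nabla\tau$, where the kernel singularity meets the region of genuine deformation. For such $p_*$ I would abandon the pullback strategy and instead use the geometry of $F$ directly: in that regime, the boundary of $F$ near $p$ fits inside an angular wedge comparable to the one analysed in Lemma \ref{ANGLE}, and a direct application of Lemma \ref{ANGLE} (together with the fact that $F$ contains the lower half-plane, as $E$ does) yields the required bound, with the $\ell^{-1}$ factor coming from the aperture $\delta = \arctan(1/\ell)\sim 1/\ell$. For $p_*$ bounded away from $\operatorname{supp}\nabla\tau$, the error integral is uniformly bounded by a constant $C(\ell)$, so choosing $\phi(\ell)$ sufficiently small absorbs the $C(\ell)\theta$ error and gives $c_o(\ell) := c - C(\ell)\phi(\ell) > 0$. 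The refined asymptotic $c_o(\ell)\sim \bar c\min\{c,\ell^{-1}\}$ for large $\ell$ then reflects the fact that the maximal gain extractable from the tilt is capped by the angular width $\sim 1/\ell$ of the rotated region, in accordance with the sharp form of Lemma \ref{ANGLE}.
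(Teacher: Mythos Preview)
Your overall architecture---pull back via the diffeomorphism~$\Psi_\theta$ to compare the curvature of~$F$ at~$p$ with that of~$E$ at~$p_*=\Psi_\theta^{-1}(p)$, and then fall back on Lemma~\ref{ANGLE} for the remaining regime---matches the paper's strategy. The gap is in the error analysis.

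Your claimed bound
\[
\int_{\R^2}\frac{\chi_F(y)-\chi_{F^c}(y)}{|y-p|^{2+2s}}\,dy
\;\ge\;\frac{c}{|p_*|^{2s}}
-C(\ell)\,\theta\int_{\operatorname{supp}\nabla\tau}\frac{dz}{|z-p_*|^{2+2s}}
\]
does not follow from the splitting you describe. First, the error is \emph{not} supported on~$\operatorname{supp}\nabla\tau$: if~$\tau(p_*)=1$ and~$z\in\{\tau=0\}$, then~$\Psi_\theta(z)-\Psi_\theta(p_*)=z-R_\theta p_*$, which differs from~$z-p_*$ by~$(I-R_\theta)p_*$, of size~$\sim\theta|p_*|$. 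So there is a nontrivial kernel discrepancy throughout~$\{\tau=0\}$, not just in the transition region, and your ``compare directly'' step hides exactly this. Second, even on the transition region a multiplicative kernel error~$1+C(\ell)\theta$ cannot be converted into the displayed additive error, because the principal-value integral~$\int|z-p_*|^{-(2+2s)}\,dz$ diverges; one needs second-order information on~$\Psi_\theta$ together with the~$C^{1,1}$ regularity of~$\partial E$ to extract a finite bound.

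The paper circumvents both issues by first establishing the uniform estimates~$|D(\Psi_\theta-\mathrm{id})|,\;|D^2(\Psi_\theta-\mathrm{id})|\le C(1+\ell)\theta$ (this is where the bound~$|\nabla\alpha(x)|\le C/|x-e|$ is used), and then quoting Theorem~1.1 of~\cite{cozzi} on the variation of fractional mean curvature under~$C^{1,\alpha}$ diffeomorphisms, which delivers the \emph{uniform} additive error
\[
\int_{\R^2}\frac{\chi_F(y)-\chi_{F^c}(y)}{|y-p|^{2+2s}}\,dy
\;\ge\;
\int_{\R^2}\frac{\chi_E(y)-\chi_{E^c}(y)}{|y-p_*|^{2+2s}}\,dy
-C(1+\ell)\theta.
\]
With this in hand, the paper's case split is on~$|p|$: for~$|p|\le r_\theta:=\bigl(c/(2C(1+\ell)\theta)\bigr)^{1/(2s)}$ the hypothesis absorbs the error; for~$|p|>r_\theta$ (where~$\partial F$ already coincides with its asymptotic straight line) one compares~$F$ with~$F\cup T$ for a small triangle~$T$ near the origin and applies Lemma~\ref{ANGLE} directly. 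Your split on distance to~$\operatorname{supp}\nabla\tau$ is not the same dichotomy and, as written, does not close.
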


\begin{proof} First we point out that
\begin{equation}\label{DE:alpha}
|\nabla\alpha(x)|\le \frac{C}{|x-e|}
,\end{equation}
for some~$C>0$. Indeed, $\alpha(x)$ is identified by the two
conditions
\begin{equation}\label{cos-IU-1}
|x-e|\cos\alpha(x)=|x_1-\ell+q|\end{equation}
and~$|x-e|\sin\alpha(x)=|x_2-1|$.
Assume also that~$\sin^2\alpha(x)\ge 1/2$ (the case~$\cos^2\alpha(x)\ge 1/2$
is similar). Then we differentiate the relation~\eqref{cos-IU-1}
and we obtain
$$ \frac{x-e}{|x-e|}\cos\alpha(x)
-|x-e|\sin\alpha(x)\,\nabla\alpha(x)
=\frac{x_1-\ell+q}{|x_1-\ell+q|}\,(1,0).$$
Therefore
\begin{eqnarray*}
\frac{\sqrt{2}}2 |x-e|\,|\nabla\alpha(x)|
\le |x-e|\,|\sin\alpha(x)|\,|\nabla\alpha(x)|
= \left|
\frac{x-e}{|x-e|}\cos\alpha(x)-\frac{x_1-\ell+q}{|x_1-\ell+q|}\,(1,0)\right|
\le 2,
\end{eqnarray*}
which proves~\eqref{DE:alpha}.

Similarly, taking one more derivative, one sees that
\begin{equation}\label{DE:alpha:second}
|D^2\alpha(x)|\le \frac{C}{|x-e|^2}
.\end{equation}
Now, by~\eqref{PIOP11} and~\eqref{DE:alpha},
\begin{equation}\label{DE:alpha:2}
|\nabla\tau(x)|\le C\left( \chi_{B_1(e)\setminus B_{1/2}(e)}(x)+
\frac{\chi_{\R^2\setminus B_{1/2}(e)}(x)}{|x-e|}\right).
\end{equation}
Using~\eqref{DE:alpha:second}, one also obtains that
\begin{equation}\label{DE:alpha:2:second}
|D^2\tau(x)|\le C\left( \chi_{B_1(e)\setminus B_{1/2}(e)}(x)+
\frac{\chi_{\R^2\setminus B_{1/2}(e)}(x)}{|x-e|}\right).
\end{equation}
Let now
$$ \Phi_\theta(x):=
\Psi_\theta(x)-x= 
\left(
\begin{matrix}
\cos(\tau(x)\theta)-1 & \sin (\tau(x)\theta)\\
-\sin(\tau(x)\theta) & \cos(\tau(x)\theta)-1
\end{matrix}
\right) \,
\left(
\begin{matrix}
x_1\\ x_2 \end{matrix}
\right).$$
We claim that
\begin{equation}\label{DIF:1}
|D\Phi_\theta(x)|\le C\,(1+\ell)\,\theta,
\end{equation}
for some~$C>0$. To prove it, we consider the first coordinate of~$\Phi_\theta(x)$,
which is
\begin{equation}\label{first coo}
\big(\cos(\tau(x)\theta)-1\big)\,x_1 +
\sin(\tau(x)\theta)\,x_2,\end{equation}
since the computation with the second coordinate is similar.
We bound the derivative of~\eqref{first coo} by
\begin{equation}\label{first coo2}
\big|\cos(\tau(x)\theta)-1\big|+
\big|\sin(\tau(x)\theta)\big|
+ \theta\,\Big( \big|\sin(\tau(x)\theta)\big|+\big|\cos(\tau(x)\theta)\big|\Big)
|\nabla\tau(x)|\,|x|.\end{equation}
Thus, we bound~$\big|\cos(\tau(x)\theta)-1\big|\le C\theta^2$
and~$\big|\sin(\tau(x)\theta)\big|\le C\theta$ and we make use of~\eqref{DE:alpha:2},
to estimate the quantity in~\eqref{first coo2} by
\begin{equation}\label{first coo3}
C\theta \,\left(1+\frac{
\chi_{\R^2\setminus B_{1/2}(e)}(x)
\;|x|}{|x-e|} \right).\end{equation}
Now we observe that~$|e|=\sqrt{(\ell-q)^2+1}\le\sqrt{\ell^2+1}$, therefore
$$ |x|\le |x-e|+\sqrt{\ell^2+1}$$
and so, if~$|x-e|\ge1/2$,
$$ \frac{|x|}{|x-e|} \le 1 +2\sqrt{\ell^2+1}.$$
By inserting this information into~\eqref{first coo3}
we bound the first coordinate of~$\Phi_\theta(x)$ by~$C\,(1+\ell)\,\theta$.
This proves~\eqref{DIF:1}.

Similarly, making use of~\eqref{DE:alpha:2:second},
one sees that
\begin{equation}\label{DIF:2}
|D^2\Phi_\theta(x)|\le C\,(1+\ell)\,\theta.
\end{equation}
Notice also that, for any fixed~$x\in\R^2$,
we have that
\begin{equation}\label{NORM:1}
|\Psi_\theta(x)|= |R_{\tau(x)\theta} \,x|=|x|, \end{equation}
therefore
$$ \lim_{|x|\to+\infty} |\Psi_\theta(x)|=+\infty.$$
{F}rom this,~\eqref{DIF:1}, and the Global Inverse Function Theorem
(see e.g. Corollary 4.3 in~\cite{palais-1959-natural}),
we obtain that~$\Psi_\theta$ is a global diffeomorphism of~$\R^2$,
see Figure~\ref{FsB}.

\begin{figure}
    \centering
    \includegraphics[height=5.9cm]{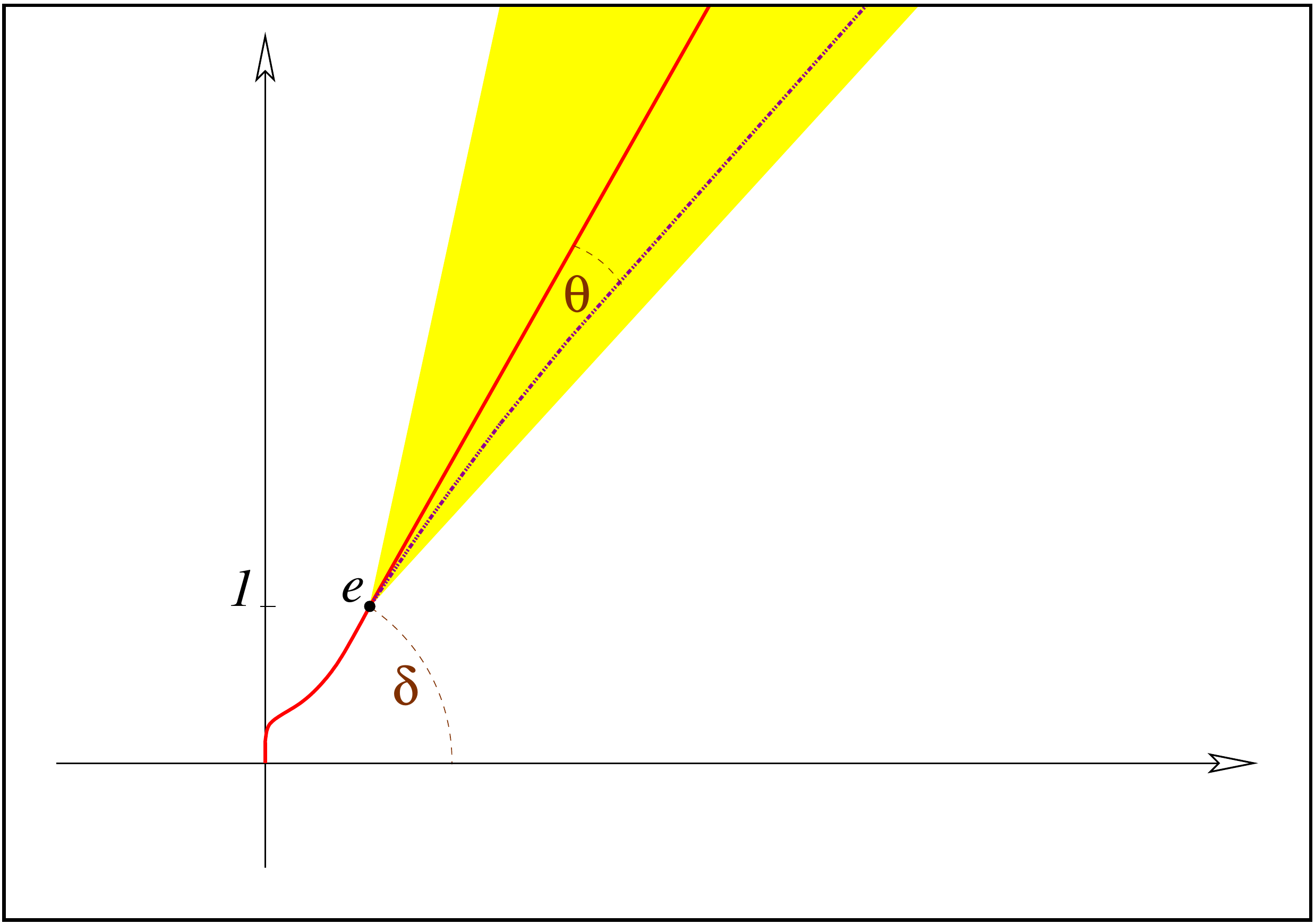}
    \caption{The diffeomorphism of~$\R^2$ in Lemma~\ref{PRE:BAR}.}
    \label{FsB}
\end{figure}

As a consequence, using~\eqref{DIF:1}, \eqref{DIF:2}
and the curvature estimates for diffeomorphisms (see Theorem~1.1
in~\cite{cozzi}), we conclude that
\begin{equation}\label{ST:MAR}
\int_{\R^2} \frac{\chi_F(y)-\chi_{F^c}(y)}{|y-p|^{2+2s}}\,dy
\ge \int_{\R^2} \frac{\chi_E(y)-\chi_{E^c}(y)}{|y-q|^{2+2s}}\,dy-
C\,(1+\ell)\,\theta,
\end{equation}
with~$q:=\Psi_\theta^{-1}(p)$, for any~$p\in(\partial F)\cap \{x_2>0\}$.

Now we claim that
\begin{equation}\label{p02:02}
{\mbox{if~$p\in\{x_2>0\}$ then~$\Psi_\theta^{-1}(p)\in\{x_2>0\}$.}}
\end{equation}
Suppose, by contradiction, that~$\Psi_\theta^{-1}(p)\in\{x_2\le0\}$.
Notice that~$\tau$ vanishes in~$\{x_2\le0\}$, therefore
$ \Psi_\theta$ is the identity in~$\{x_2\le0\}$.
As a consequence~$p=\Psi_\theta(\Psi_\theta^{-1}(p))=\Psi_\theta^{-1}(p)
\in\{x_2\le0\}$. This is a contradiction with our assumptions
and so it proves~\eqref{p02:02}.

Using~\eqref{STI:STEp0}, \eqref{NORM:1}, \eqref{ST:MAR}
and~\eqref{p02:02}, we have that
\begin{equation}\label{Go:09-01p1}
\int_{\R^2} \frac{\chi_F(y)-\chi_{F^c}(y)}{|y-p|^{2+2s}}\,dy
\ge \frac{c}{|q|^{2s}}
-C\,(1+\ell)\,\theta = \frac{c}{|p|^{2s}}
-C\,(1+\ell)\,\theta \ge \frac{c}{2\,|p|^{2s}},
\end{equation}
with~$q:=\Psi_\theta^{-1}(p)$, for any~$p\in(\partial F)\cap \{x_2>0\}\cap
B_{r_\theta}$, where~$r_\theta:={\left(\frac{c}{2C\,(1+\ell)\,\theta}\right)^{\frac{1}{2s}}}$
(we stress that~$r_\theta$ is large, for small~$\theta$,
according to the statement of Lemma~\ref{PRE:BAR}).

Now we take~$p\in\big((\partial F)\cap \{x_2>0\}\big)\setminus B_{r_\theta}$
and we observe that~$((\partial F)\setminus B_{r_\theta})\cap \{x_2>0\}$
coincides with a straight line of the form~$\lambda:=\{\ell_\theta x_2-x_1-q_\theta=0\}$,
with~$\ell_\theta\ge \ell$, $|\ell_\theta-\ell|$ as close
to zero as we wish for small~$\theta$, and~$q_\theta:=\ell_\theta-\ell+q$.
The intersections of the straight line~$\lambda$
with~$\{x_2=8\}$ and~$\{x_2=0\}$ occur at points~$x_1=8\ell_\theta-q_\theta$
and~$x_1=-q_\theta$, respectively.

Hence, we consider the triangle~$T$ with vertices~$(8\ell_\theta-q_\theta,0)$, $(8\ell_\theta-q_\theta,8)$
and~$(-q_\theta, 0)$. We observe that~$|T|=32\ell_\theta\le 32(1+\ell)$,
for small~$\theta$. Moreover, if~$y\in T$, then~$|y|\le 
C(1+\ell_\theta+q_\theta)\le C(1+\ell)$, up to renaming constants.
Therefore, if~$p\in B_{r_\theta}^c$ and~$y\in T$,
$$ |y-p|\ge |p|- C(1+\ell)\ge \frac{|p|}{2},$$
if~$\theta$ is small. Consequently,
\begin{equation}\label{13UI:0}
\int_{T} \frac{dy}{|y-p|^{2+2s}}\le \frac{C(1+\ell)}{|p|^{2+2s}}
\le \frac{C(1+\ell)}{r_\theta^2\,|p|^{2s}}.
\end{equation}
Now we define~$\tilde F:= F\cup T$. By Lemma~\ref{ANGLE},
$$ \int_{\R^2} \frac{\chi_{\tilde F}(y)-\chi_{\tilde F^c}(y)}{
|y-p|^{2+2s}}\,dy\ge
\frac{c(\ell_\theta)}{|p-(-q_\theta,0)|^{2s}}.$$
Using that~$\ell_\theta\le \frac{3\ell}{2}$ and that~$c(\cdot)$ is
nonincreasing, we see that~$c(\ell_\theta)\ge c\left(\frac{3\ell}{2}\right)$.
Moreover,
$$ |p-(-q_\theta,0)|\le |p|+q_\theta\le |p|+\ell+1\le 2|p|,$$
so we obtain that
$$ \int_{\R^2} \frac{\chi_{\tilde F}(y)-\chi_{\tilde F^c}(y)}{
|y-p|^{2+2s}}\,dy\ge
\frac{c\left(\frac{3\ell}{2}\right)}{2^{2s} \, |p|^{2s}}.$$
Exploiting this and~\eqref{13UI:0}, we obtain that,
for any~$p\in\big((\partial F)\cap \{x_2>0\}\big)\setminus B_{r_\theta}$,
\begin{equation}\label{Go:09-01p2}
\begin{split}
& \int_{\R^2} \frac{\chi_F(y)-\chi_{F^c}(y)}{|y-p|^{2+2s}}\,dy
\ge 
\int_{\R^2} \frac{\chi_{\tilde F}(y)-\chi_{\tilde F^c}(y)}{
|y-p|^{2+2s}}\,dy - \int_{T} \frac{dy}{|y-p|^{2+2s}}
\\ &\qquad\ge \frac{c\left(\frac{3\ell}{2}\right)}{2^{2s} \, |p|^{2s}} - \frac{C(1+\ell)}{r_\theta^2\,|p|^{2s}}
\ge \frac{c\left(\frac{3\ell}{2}\right)}{2^{1+2s} \, |p|^{2s}},
\end{split}\end{equation}
for small~$\theta$. Then, \eqref{VBhJ} follows by combining
\eqref{Go:09-01p1}
and~\eqref{Go:09-01p2}.
\end{proof}

By iterating Lemma~\ref{PRE:BAR} we can
construct the following barrier:

\begin{proposition}\label{IT:BARR}
Fix~$K\ge0$. Then there exist~$a_K\in(0,1)$,
$\ell_K\ge K$, $q_K\ge0$, $c_K\in(0,1)$,
a continuous function~$u_K :[0,+\infty)\to [0,+\infty)$
and a set~$E_K\subset\R^2$ with $(\partial E_K)\cap \{x_2>0\}$
of class~$C^{1,1}$
and such that:
\begin{itemize}
\item $u_K(x_2) =\ell_K\,x_2-q_K $
for any~$x_2\in [1,+\infty)$,
\item we have that
\begin{eqnarray*}
&& E_K \cap \{x_1 < 0\} = (-\infty,0)\times(-\infty,0), \\
&& E_K\supseteq \R\times (-\infty,0),\\
&& E_K\supseteq (0,+\infty)\times (-\infty,a_K],\\
&& E_K\cap \{x_2 >1\} = \{x_1> u_K(x_2), \quad x_2 >1\},\\
&& E_K\cap \{x_1>\ell_K-q_K\} =
\{x_1> u_K(x_2),\quad x_1 >\ell_K-q_K\}\end{eqnarray*}
and
$$ \int_{\R^2} \frac{\chi_{E_K}(y)-\chi_{E_K^c}(y)}{|y-p|^{2+2s}}\,dy
\ge \frac{c_K}{|p|^{2s}},$$
for any~$p\in (\partial E_K)\cap \{x_2>0\}$.
\end{itemize}
More precisely, for large~$K$, one has that~$c_K\sim \bar c\ell_K^{-1}$,
for some~$\bar c>0$. 

Moreover, one can also prescribe that
\begin{equation}\label{more}
q_K\le K^{-1}.\end{equation}
\end{proposition}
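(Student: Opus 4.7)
My plan is to build $E_K$ by finitely iterating Lemma~\ref{PRE:BAR} on a wedge-shaped base case supplied by Lemma~\ref{ANGLE}, inserting a dilation after each tilt to renormalize the configuration and keep it in the form required by the lemma. I would start at step $j=0$ by choosing some $\ell_0\ge 0$ large enough that the angular sector where $\Psi_\theta$ acts nontrivially lies strictly to the right of $\{x_1=0\}$ (so the structural requirement $E\cap\{x_1<0\}=(-\infty,0)\times(-\infty,0)$ is preserved at every iteration), $q_0:=0$, and $E^{(0)}$ equal to the set of Lemma~\ref{ANGLE} with this value of $\ell$. Lemma~\ref{ANGLE} then supplies the initial curvature estimate~\eqref{STI:STEp0} with constant $c_0:=c(\ell_0)\in(0,1)$, and $E^{(0)}$ manifestly satisfies the structural hypotheses of Lemma~\ref{PRE:BAR} with $(\ell_0,q_0)$.

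For the inductive step, given $E^{(j)}$ satisfying the hypotheses of Lemma~\ref{PRE:BAR} with parameters $(\ell_j,q_j,c_j)$, I would pick $\theta_j\in(0,\phi(\ell_j)]$ and form $F^{(j)}:=\Psi_{\theta_j}(E^{(j)})$. From the analysis of the straight line $\lambda$ in the proof of Lemma~\ref{PRE:BAR}, outside a large ball $\partial F^{(j)}$ coincides with the line of slope parameter $\ell'_{j+1}=(\ell_j+\tan\theta_j)/(1-\ell_j\tan\theta_j)$ and intercept $q'_{j+1}=\ell'_{j+1}-\ell_j+q_j$, and the new curvature bound holds on $\{x_2>0\}$ with constant $c_o(\ell_j)$. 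Next I would dilate by a factor $\lambda_j\in(0,1)$ chosen so that this straight portion begins exactly at $x_2=1$, and set $E^{(j+1)}:=\lambda_j F^{(j)}$. A brief change of variables shows that estimates of the form $c/|p|^{2s}$ are invariant under dilation, so $E^{(j+1)}$ satisfies~\eqref{STI:STEp0} with $c_{j+1}:=c_o(\ell_j)$ and parameters $\ell_{j+1}:=\ell'_{j+1}$, $q_{j+1}:=\lambda_j q'_{j+1}$; the structural hypotheses of Lemma~\ref{PRE:BAR} are again in force, so the induction can continue.

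I would iterate $N$ times, calibrating the schedule $(\theta_j)_{j=0,\dots,N-1}$ so that, upon setting $E_K:=E^{(N)}$, all conclusions hold simultaneously. The slope $\ell_K$ exceeds $K$ because each step strictly increases $\ell$. The intercept $q_K$ can be made at most $K^{-1}$ through the recursion $q_{j+1}\le\lambda_j(q_j+\ell'_{j+1}-\ell_j)$, exploiting that $\lambda_j$ (of order $1/r_{\theta_j}$) can be made arbitrarily small by shrinking $\theta_j$, so that the cumulative product of dilations drives $q_N$ down as desired. The asymptotic $c_K\sim\bar c\,\ell_K^{-1}$ follows from the asymptotic of $c_o(\ell)$ in Lemma~\ref{PRE:BAR}: once the ``clamp'' $c_j\le\ell_j^{-1}$ activates, each subsequent iteration preserves the scaling $c\sim\bar c/\ell$ up to harmless multiplicative factors. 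The regularity $(\partial E_K)\cap\{x_2>0\}\in C^{1,1}$ is immediate, since every $\Psi_{\theta_j}$ is a smooth diffeomorphism and dilations are smooth. Finally, the inclusion $(0,+\infty)\times(-\infty,a_K]\subseteq E_K$ for some $a_K>0$ is inherited from the fact that each $\Psi_{\theta_j}$ is the identity on $\{x_2\le 0\}$ and on $B_{1/2}(e_j)$, so a positive-length vertical piece of the boundary at $\{x_1=0\}$ survives each step in rescaled form, and $a_K$ is taken to be the length of the segment remaining at step $N$.

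The hardest part will be the simultaneous calibration of the schedule $(\theta_j,\lambda_j)$: small angles are needed to shrink $q$ via large dilations and to limit the degradation of $c$, yet they slow the growth of $\ell$, and the cap $\theta_j\le\phi(\ell_j)$ becomes more restrictive as $\ell_j$ grows. Each individual estimate is quantitative and flows cleanly from Lemma~\ref{PRE:BAR} together with elementary scaling, but balancing them so that $\ell_K\ge K$, $q_K\le K^{-1}$, and the sharp asymptotic $c_K\sim\bar c/\ell_K$ are all achieved at the final step---in particular matching the precise scaling of $c_K$---is where the bulk of the bookkeeping lives.
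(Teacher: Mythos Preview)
Your strategy---iterate Lemma~\ref{PRE:BAR} starting from a wedge, interleave with dilations, and stop once the slope exceeds $K$---is the paper's. But you make the argument harder than it needs to be on one point and leave a small gap on another.

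The over-complication is the handling of $q_K\le K^{-1}$, which you fold into the iterative schedule and flag as ``the hardest part''. The paper decouples it entirely: start from $\ell_0=0$, $q_0=0$; use a \emph{fixed} dilation factor $\tfrac12$ at each step; always take the maximal allowed angle $\theta_j$; and iterate until $\ell_{j_o}\ge K$. If the resulting $q_{j_o}$ happens to exceed $K^{-1}$, a \emph{single} additional dilation by $\mu:=K^{-1}/q_{j_o}\in(0,1)$ fixes it, since the slope $\ell$ and the curvature bound $c/|p|^{2s}$ are both scale-invariant while $q$ scales linearly. No simultaneous calibration of $(\theta_j,\lambda_j)$ is required, and the bookkeeping you anticipate disappears.

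The gap is the claim that ``$\ell_K$ exceeds $K$ because each step strictly increases $\ell$''. Strictly increasing does not imply unbounded, and your freedom to take $\theta_j$ small (precisely in order to shrink $q$ via large dilations) could in principle let the slopes stall below $K$. The paper closes this by a short contradiction: if $\ell_j\le\ell_\star$ for all $j$, then each rotation is by at least $\phi(\ell_\star)>0$ (since $\phi$ is nonincreasing), so after finitely many steps the slope must exceed $\ell_\star$. Once you decouple $q_K$ from the schedule as above, you can always take $\theta_j=\phi(\ell_j)$ and invoke this argument directly.
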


\begin{proof} We apply Lemma~\ref{PRE:BAR} iteratively for a large
(but finite) number of times, see Figure~\ref{FsC}.

\begin{figure}
    \centering
    \includegraphics[height=5.9cm]{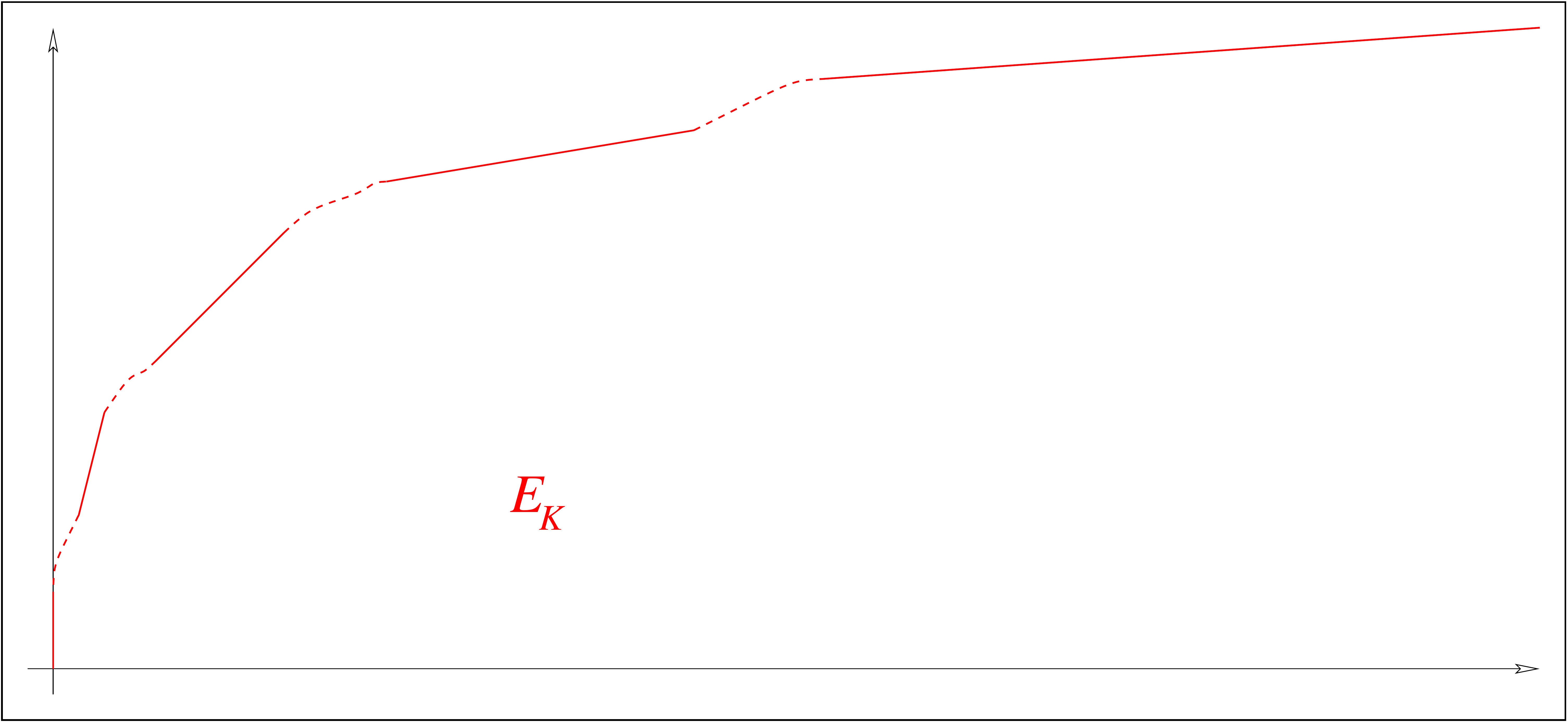}
    \caption{The barrier of Proposition~\ref{IT:BARR}.}
    \label{FsC}
\end{figure}

We start with~$u_0:=0$
and~$E_0:=\R^2\setminus\{ x_1\le0\le x_2\}$.
By Lemma~\ref{ANGLE} (used here with~$\ell:=0$)
we know that
$$ \int_{\R^2} \frac{\chi_{E_0}(y)-\chi_{E_0^c}(y)}{|y-p|^{2+2s}}\,dy
\ge \frac{c}{|p|^{2s}},$$
for some~$c>0$. Then we apply Lemma~\ref{PRE:BAR}
and we construct a set~$E_1$ whose boundary
coincides with~$\{x_2=0\}$ when~$\{x_1<0\}$
and with a straight line~$\{\ell_1 x_2-x_1-q_1=0\}$
when~$\{x_2>4\}$, whose fractional curvature satisfies the desired estimate
(as a matter of fact, we can take the new slope~$\ell_1$
as the one obtained by~$\phi(0)$
in Lemma~\ref{PRE:BAR}, thus~$\ell_1>0$).

Then we scale~$E_1$ by a factor~$\frac12$ and we apply once again
Lemma~\ref{PRE:BAR}, obtaining
a set~$E_2$ whose boundary
coincides with~$\{x_2=0\}$ when~$\{x_1<0\}$
and with a straight line~$\{\ell_2 x_2-x_1-q_2=0\}$
when~$\{x_2>4\}$, whose fractional curvature satisfies the desired estimate.
Notice again that~$\ell_2$ is obtained
in Lemma~\ref{PRE:BAR} by rotating clockwise
the straight line of slope~$\ell_1$ by an angle~$\phi(\ell_1)>0$,
hence~$\ell_2>\ell_1$.

Iterating this procedure, we obtain a sequence of 
increasing slopes~$\ell_j$ and sets~$E_j$
satisfying the desired geometric properties.
We stress that, for large~$j$, the slope~$\ell_j$
must become larger than the quantity~$K$ fixed
in the statement of Proposition~\ref{IT:BARR}.
Indeed, if not, say if~$\ell_j\le \ell_\star$ for some~$\ell_\star>0$,
at each step of the iteration we could
rotate the straight line by an angle of size larger than~$\phi(\ell_\star)$,
which is a fixed positive quantity 
(recall that~$\phi$ in 
Lemma~\ref{PRE:BAR} is nonincreasing): hence
repeating this argument many times we would make the slope become
bigger than~$\ell_\star$, that is a contradiction.

Thus, we can define~$j_o$ to be the first~$j$ for which~$\ell_j\ge K$.
The set~$E_{j_o}$ obtained in this way satisfies the desired properties,
with the possible exception of~\eqref{more}.
So, to obtain~\eqref{more}, we may suppose that~$q_{j_o}>K^{-1}$,
otherwise we are done, and
we scale the picture once again by a factor~$\mu:= K^{-1} q_{j_o}^{-1}\in(0,1)$.
In this way, the geometric properties of the set
and the estimates on the fractional curvature are preserved,
but the line~$\{ \ell_{j_o} x_2-x_1-q_{j_o}=0\}$
is transformed into the line~$\{ \ell_{j_o} x_2-x_1-\tilde q_{j_o}=0\}$,
with~$\tilde q_{j_o}:= \mu q_{j_o}$.
By construction, we have that~$\tilde q_{j_o}=K^{-1}$,
which gives~\eqref{more}.
\end{proof}

\section{Construction of barriers which grow
like~$x_1^{\frac{1}{2}+s+\epsilon_0}$}\label{sec:GROW:R}

In this section, we construct barriers in the plane,
which are subsolutions of the fractional curvature equation when~$\{x_1>0\}$,
which possess a ``vertical'' portion along~$\{x_1=0\}$ and
which grow like~$x_1^{\frac{1}{2}+s+\epsilon_0}$ at infinity
(here, $\epsilon_0>0$ is arbitrarily small).
This is a refinement of the barrier constructed in
Proposition~\ref{IT:BARR}, which grows linearly (with almost
horizontal slope). Roughly speaking, 
the difference with Proposition~\ref{IT:BARR} is
that the results obtained there have nice scaling properties
and an elementary geometry (since the barrier constructed
there is basically the junction of a finite number
of straight lines) but do not possess
an optimal growth at infinity. As a matter of fact,
the power obtained
here at infinity is dictated by the growth of
the functions that are harmonic with respect
to the fractional Laplacian~$(-\Delta)^{\gamma_0}$,
where
\begin{equation}
\gamma_0: =\frac{1}{2}+s.\end{equation}
As a matter of fact, this procedure
provides a good approximation
of the fractional mean curvature equation at points with
nearly horizontal tangent.
Namely, we set
$$ \gamma:= \frac{1}{2}+s+\epsilon_0=\gamma_0+\epsilon_0
\in\left(\frac12,1\right).$$
We will use the fact that~$\gamma>\gamma_0$ to construct
a subsolution of the $\gamma_0$-fractional Laplace equation.
More precisely, the main formula we need in this framework
is the following:

\begin{lemma}\label{TOR}
Let~$\epsilon_0\in(0,1-\gamma_0)$. We have that
$$ \frac{1}{2} \int_{\R}
\frac{ (1+t)^\gamma_+ + (1-t)^\gamma_+ -2 }{|t|^{2+2s}}\,dt
\ge c_\star \epsilon_0,$$
for some~$c_\star >0$.
\end{lemma}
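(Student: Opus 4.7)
The plan is to regard the quantity as a function
$$F(\gamma) := \frac{1}{2}\int_\R \frac{(1+t)_+^\gamma + (1-t)_+^\gamma - 2}{|t|^{2+2s}}\,dt$$
of the exponent $\gamma \in [0,1+2s)$, and exploit its convexity together with its vanishing at $\gamma = \gamma_0 = \tfrac12 + s$. The lemma then reduces to the inequality $F(\gamma_0+\epsilon_0) \ge c_\star \epsilon_0$.

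First I would compute the boundary value $F(0)$ directly. Since $x_+^0$ is the characteristic function of $(0,+\infty)$, the integrand vanishes on $\{|t|<1\}$ and equals $-1$ on $\{|t|>1\}$, so
$$F(0) = -\int_1^{+\infty}\frac{dt}{t^{2+2s}} = -\frac{1}{1+2s}<0.$$
Next I would verify $F(\gamma_0) = 0$. After the change of variable $y = 1-t$ and symmetrization under $t \leftrightarrow -t$, $F(\gamma_0)$ equals, up to a positive normalizing constant, the value of $(-\Delta)^{\gamma_0}(x_+^{\gamma_0})$ at $x=1$ (note $1+2\gamma_0 = 2+2s$ matches the kernel). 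The classical fact that $x_+^{\gamma_0}$ is $\gamma_0$-harmonic on the positive half-line---precisely the ``self-sustaining'' $s$-harmonic solution $x_+^s$ alluded to in the paper's introduction, and whose defining integral identity is treated in the paper's appendix---then yields $F(\gamma_0)=0$.

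Third, I would prove that $F$ is convex on $[0,1+2s)$. Differentiating twice under the integral sign (with the convention $x_+^\gamma[\log x_+]^2 = 0$ at $x = 0$) gives
$$F''(\gamma) = \frac{1}{2}\int_\R \frac{(1+t)_+^\gamma[\log(1+t)_+]^2 + (1-t)_+^\gamma[\log(1-t)_+]^2}{|t|^{2+2s}}\,dt \ge 0,$$
with integrability justified by the bounds $O(|t|^{-2s})$ near $t=0$, $x^\gamma(\log x)^2 \to 0$ as $x \to 0^+$ near $t = \mp 1$, and $O(|t|^{\gamma-2-2s}(\log|t|)^2)$ at infinity (since $\gamma < 1+2s$).

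Finally, the conclusion would follow from the three-chord monotonicity of convex functions applied to the triple $0 < \gamma_0 < \gamma_0+\epsilon_0 < 1+2s$:
$$\frac{F(\gamma_0+\epsilon_0)-F(\gamma_0)}{\epsilon_0} \;\ge\; \frac{F(\gamma_0)-F(0)}{\gamma_0} \;=\; \frac{1}{\gamma_0(1+2s)} \;=\; \frac{2}{(1+2s)^2}.$$
Since $F(\gamma_0) = 0$, this rearranges to $F(\gamma_0+\epsilon_0) \ge \frac{2\epsilon_0}{(1+2s)^2}$, which proves the lemma with $c_\star := \frac{2}{(1+2s)^2}$ (bounded below by $\tfrac12$ uniformly in $s \in (0,1/2)$). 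The only delicate step is verifying $F(\gamma_0)=0$; granted that integral identity, the remainder of the argument is elementary.
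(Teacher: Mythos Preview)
Your proof is correct and takes a genuinely different route from the paper's. Both arguments hinge on the same key identity $F(\gamma_0)=0$ (the $\gamma_0$-harmonicity of $x_+^{\gamma_0}$), but they leverage it differently. The paper works \emph{pointwise in $t$}: it Taylor-expands $r\mapsto r^{\gamma/\gamma_0}$ around $r=1$ and substitutes $r=(1\pm t)_+^{\gamma_0}$, thereby writing $(1\pm t)_+^\gamma$ as the linear part $1+\tfrac{\gamma}{\gamma_0}\big((1\pm t)_+^{\gamma_0}-1\big)$ plus a nonnegative remainder proportional to $\epsilon_0$; after integration the linear part vanishes by the $\gamma_0$-identity and the remainder yields $c_\star\epsilon_0$ with $c_\star$ an implicit integral. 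You instead work \emph{globally in $\gamma$}: you show $F$ is convex on $[0,1+2s)$ by differentiating twice under the integral, compute $F(0)=-\tfrac{1}{1+2s}$ directly, and apply the three-chord inequality through $0<\gamma_0<\gamma$. Your approach is shorter and delivers an explicit constant $c_\star=\tfrac{2}{(1+2s)^2}\ge\tfrac12$; the paper's approach avoids having to justify differentiation under the integral and continuity of $F$ at $\gamma=0^+$ (which you handle, but which requires a dominated-convergence check). Either way the substance is the same: convexity---of $r\mapsto r^{\gamma/\gamma_0}$ in the paper, of $\gamma\mapsto F(\gamma)$ for you---plus the vanishing at $\gamma_0$.
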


\begin{proof} Let~$r\ge0$. By a Taylor expansion at~$r=1$, we have that
$$ r^{\frac{\gamma}{\gamma_0}} = 1 +{\frac{\gamma\,(r-1)}{\gamma_0}}
+{\frac{\gamma\,(\gamma-\gamma_0)\,
\xi^{ {\frac{\gamma}{\gamma_0}} -2}\,(r-1)^2 }{\gamma_0^2}},$$
for some~$\xi$ on the segment joining~$r$ to~$1$.
In particular, $\xi\le 1+r$.
Using this with~$r:=(1\pm t)^{\gamma_0}_+$, we obtain
$$ (1\pm t)^\gamma_+
= 1 +{\frac{\gamma\,\big((1\pm t)^{\gamma_0}_+-1\big)}{\gamma_0}}
+{\frac{\gamma\,(\gamma-\gamma_0)\,
\xi^{ {\frac{\gamma}{\gamma_0}} -2}\,
\big((1\pm t)^{\gamma_0}_+-1\big)^2 }{\gamma_0^2}},$$
for some~$\xi\in [0, 2+|t|]$.
Consequently, since
$$ {\frac{\gamma}{\gamma_0}} -2 ={\frac{\epsilon_0}{\gamma_0}} -1<0$$
we obtain
that
$$ \xi^{ {\frac{\gamma}{\gamma_0}} -2} \ge
(2+|t|)^{ {\frac{\gamma}{\gamma_0}} -2}\ge (2+|t|)^{-2}.$$
Accordingly,
$$ (1\pm t)^\gamma_+
\ge 1 +{\frac{\gamma\,\big((1\pm t)^{\gamma_0}_+-1\big)}{\gamma_0}}
+{\frac{\gamma\,(\gamma-\gamma_0)\,
\big((1\pm t)^{\gamma_0}_+-1\big)^2 }{\gamma_0^2
\,( 2+|t| )^{ 2 }
}},$$
and so
$$ (1+ t)^\gamma_+ + (1-t)^\gamma_+ -2
\ge {\frac{\gamma\,\big((1+ t)^{\gamma_0}_+
+(1-t)^{\gamma_0}_+
-2\big)}{\gamma_0}}
+{\frac{\gamma\,(\gamma-\gamma_0)\,\big[
\big((1+t)^{\gamma_0}_+-1\big)^2
+\big((1-t)^{\gamma_0}_+-1\big)^2 \big]}{\gamma_0^2
\,( 2+|t| )^{ 2 }
}}.$$
Hence, we set
$$ \phi(t):=
{\frac{
\big((1+t)^{\gamma_0}_+-1\big)^2
+\big((1-t)^{\gamma_0}_+-1\big)^2 }{ |t|^{2s}
\,( 2+|t| )^{ 2 }
}},$$
we use that~$\gamma=\gamma_0+\epsilon_0>\gamma_0$ and we conclude that
\begin{equation}\label{GH:SP0}
\int_{\R}
\frac{ (1+t)^\gamma_+ + (1-t)^\gamma_+ -2 }{|t|^{2+2s}}\,dt\ge
\frac{\gamma}{\gamma_0} \int_{\R}
\frac{ (1+t)^{\gamma_0}_+ + (1-t)^{\gamma_0}_+ -2 }{|t|^{2+2s}}\,dt
+\frac{\epsilon_0}{\gamma_0} \int_\R \phi(t)\,dt.
\end{equation}
Also, we know (see e.g.~\cite{getoor}) that~$(-\Delta)^{\gamma_0} t_+^{\gamma_0}=0$
for any~$t>0$, therefore, using this formula at~$t=1$
and noticing that~$1+2\gamma_0=2s$, we see that
$$ \int_{\R}
\frac{ (1+t)^{\gamma_0}_+ + (1-t)^{\gamma_0}_+ -2 }{|t|^{2+2s}}\,dt =0.$$
Using this and~\eqref{GH:SP0}, we obtain
$$ \int_{\R}
\frac{ (1+t)^\gamma_+ + (1-t)^\gamma_+ -2 }{|t|^{2+2s}}\,dt\ge
\frac{\epsilon_0}{\gamma_0} \int_\R \phi(t)\,dt,$$
which
implies the desired result.
\end{proof}

Throughout this section, we will consider~$m$ and~$\epsilon_0$
(to be taken appropriately small in the sequel, namely~$\epsilon_0>0$
can be fixed as small as one wishes, and then~$m>0$ is taken to
be small possibly in dependence of~$\epsilon_0$) and~$c_m\in\R$,
and let
\begin{equation}\label{DEF:v1}
v(x_1):=\frac{m\,(x_1+c_m)^\gamma_+ }{\gamma}.\end{equation}
The parameter~$c_m$ will be conveniently chosen in the sequel,
see in particular the following formula~\eqref{J56GFJJ},
but for the moment it is free.
Also, given~$p:=(p_1,p_2)$ with~$p_1\ge1-c_m$ and~$p_2=v(p_1)$,
we consider the tangent line at~$v$ through~$p$, namely
\begin{equation}\label{DEF:v2}
\Lambda(x_1):=v'(p_1) (x_1-p_1)+ v(p_1)
= m\,(p_1+c_m)^{\gamma-1} (x_1-p_1)+
\frac{m\,(p_1+c_m)^\gamma }{\gamma}.\end{equation}
We observe that the tangent line above
meets the $x_1$-axis at the point~$q=(q_1,0)$, with
\begin{equation}\label{q1q1}
q_1:= p_1-\frac{v(p_1)}{v'(p_1)} = p_1- \frac{p_1+c_m}{\gamma}.\end{equation}
We also consider the region~$A$ which lies above
the graph of~$v$ and below the graph of~$\Lambda$
and the region~$B$ which lies above  
the graph of~$\Lambda$ and below the $x_1$-axis, see Figure~\ref{AB:fig}.
More explicitly, we have
\begin{equation}\label{A:Bdef}\begin{split}
& A:= \{ (x_1,x_2) {\mbox{ s.t. }} x_1>q_1 {\mbox{ and }}
v(x_1)<x_2<\Lambda(x_1)\}\\ {\mbox{and }}\quad&
B:= \{ (x_1,x_2) {\mbox{ s.t. }} x_1<q_1 {\mbox{ and }}
\Lambda(x_1)<x_2<0\}.
\end{split}\end{equation}

\begin{figure}
    \centering
    \includegraphics[height=5.9cm]{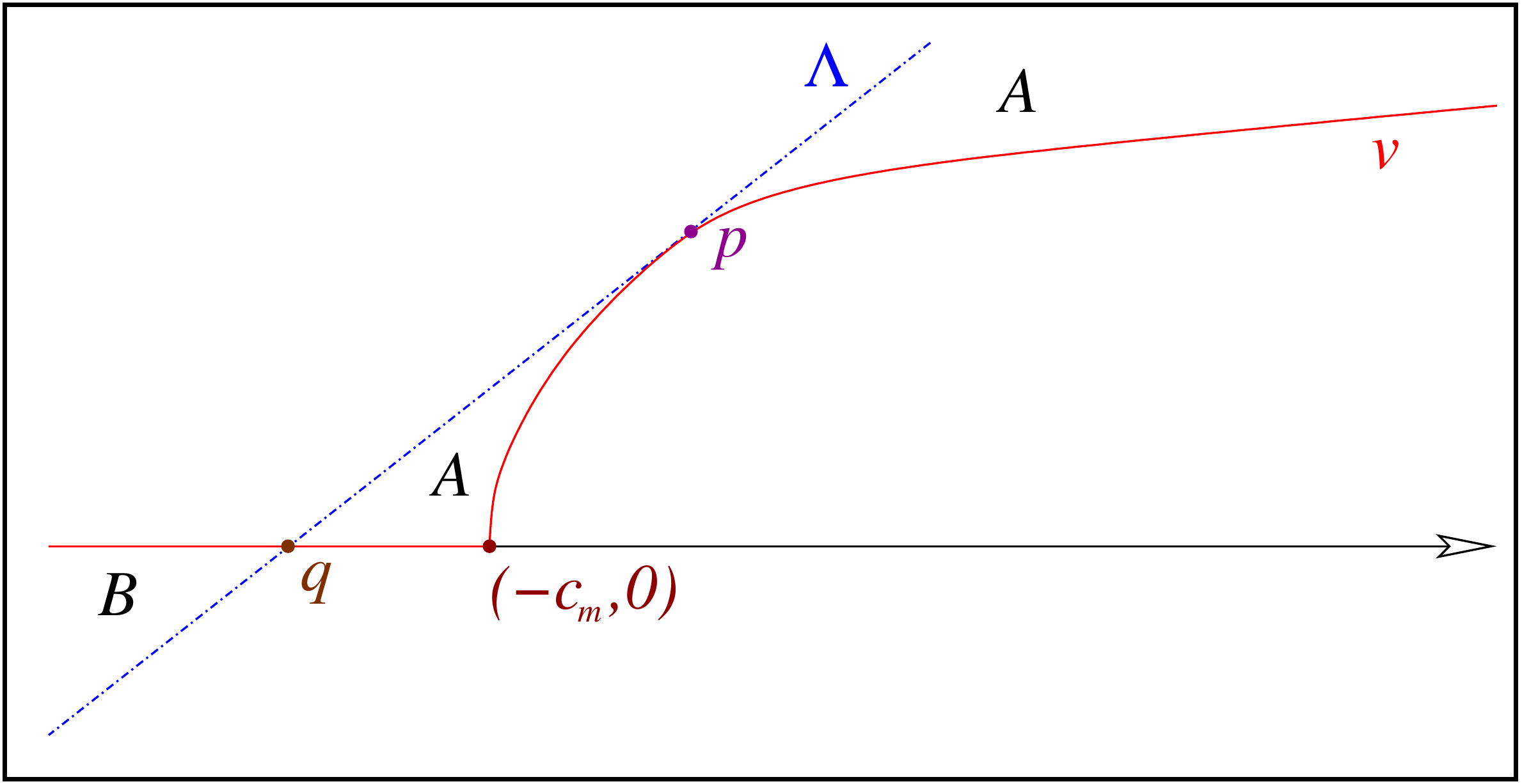}
    \caption{The sets involved in Section~\ref{sec:GROW:R}.}
    \label{AB:fig}
\end{figure}

The first technical result that we need is the following:

\begin{lemma}\label{TECH-LM1}
Let~$\epsilon_0\in(0,1-\gamma_0)$.
There exist~$c$, $c'\in(0,1)$ such that if~$m\in (0,c\epsilon_0]$
then 
\begin{equation}\label{T:SPP:1}
\int_{B} \frac{dy}{|y-p|^{2+2s}} -\int_{A} \frac{dy}{|y-p|^{2+2s}} \ge 
\frac{c' \,\epsilon_0\, m}{(p_1+c_m)^{\frac{1}{2}+s-\epsilon_0}},
\end{equation}
for any~$p:=(p_1,p_2)$ with~$p_1\ge1-c_m$ and~$p_2=v(p_1)$.
\end{lemma}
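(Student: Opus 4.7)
The plan is to reinterpret $\int_B - \int_A$ as half the fractional mean curvature of the subgraph of $v$ at $p$, and then reduce this quantity to the fractional Laplacian of $v$ at $p_1$, which Lemma~\ref{TOR} controls.

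First I would set $E := \{(y_1,y_2) : y_2 < v(y_1)\}$ and $E_\Lambda := \{(y_1,y_2) : y_2 < \Lambda(y_1)\}$. The concavity of $v$ on $\{v>0\}$ (which holds since $\gamma<1$), together with the location of the zero $q_1$ of $\Lambda$, gives by inspection $E\setminus E_\Lambda = B$ and $E_\Lambda\setminus E = A$, so $\chi_B-\chi_A = \chi_E-\chi_{E_\Lambda}$. Since $p\in\partial E_\Lambda$ lies on a straight line, the principal value $\int(\chi_{E_\Lambda}-\chi_{E_\Lambda^c})/|y-p|^{2+2s}\,dy$ vanishes by reflection symmetry, hence
\[
\int_B \frac{dy}{|y-p|^{2+2s}} - \int_A \frac{dy}{|y-p|^{2+2s}} = \frac12\int_{\R^2}\frac{\chi_E(y)-\chi_{E^c}(y)}{|y-p|^{2+2s}}\,dy.
\]

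Next I would integrate out the $y_2$-variable. After substituting $u=y_2-p_2$ and $w=u/|y_1-p_1|$, the inner integral becomes explicit through the odd bounded function $H(\xi):=\int_0^\xi(1+w^2)^{-(1+s)}dw$ (satisfying $H'(0)=1$ and $|H(\xi)-\xi|\le C|\xi|^3$ for $|\xi|\le 1$), leading to
\[
\int_B -\int_A = \int_0^\infty \frac{H(\eta_+(t)/t)+H(\eta_-(t)/t)}{t^{1+2s}}\,dt,\qquad \eta_\pm(t):=v(p_1\pm t)-v(p_1).
\]
An elementary computation shows $|\eta_\pm(t)/t|\le Cm(p_1+c_m)^{\gamma-1}\le Cm$ for all $t>0$ (using $\gamma<1$ and $p_1+c_m\ge 1$). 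For $m$ small, Taylor-expanding $H$ gives a main term $\int_0^\infty (\eta_+ +\eta_-)/t^{2+2s}\,dt = \frac12\int_\R(v(p_1+t)+v(p_1-t)-2v(p_1))/|t|^{2+2s}\,dt$. The substitution $t=(p_1+c_m)\tau$ turns this into $\frac{m(p_1+c_m)^{\gamma-1-2s}}{2\gamma}\int_\R[(1+\tau)^\gamma_+ +(1-\tau)^\gamma_+ -2]/|\tau|^{2+2s}\,d\tau$, and since $\gamma-1-2s=-(\tfrac12+s-\epsilon_0)$, Lemma~\ref{TOR} yields a lower bound of order $c_\star\epsilon_0 m/(p_1+c_m)^{1/2+s-\epsilon_0}$.

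The main obstacle is controlling the cubic remainder $R(t):=H(\eta_+/t)+H(\eta_-/t)-(\eta_+ +\eta_-)/t$. The naive pointwise bound $|R|\le C(|\eta_+/t|^3+|\eta_-/t|^3)$ produces an $O(1)$ contribution near $t=0$ and a non-integrable singularity. The rescue is a cancellation analogous to the one in the main term: writing $\eta_\pm(t)=\pm v'(p_1)t+\tfrac12 v''(p_1)t^2+O(t^3)$ one finds $\eta_+^3+\eta_-^3 = 3(v'(p_1))^2 v''(p_1)t^4+O(t^5)$, which gives $R(t)=O(m^3 t)$ as $t\to 0$, making $R(t)/t^{1+2s}=O(t^{-2s})$ integrable near the origin. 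Splitting the $t$-axis into $t\le p_1+c_m$ (where the Taylor-type argument applies) and $t>p_1+c_m$ (where $v(p_1-t)=0$ and $|\eta_\pm/t|\to 0$), I estimate $\int_0^\infty|R(t)|/t^{1+2s}\,dt\le Cm^3/(p_1+c_m)^{1/2+s-\epsilon_0}$ with $C$ independent of $\epsilon_0$. The hypothesis $m\le c\epsilon_0$ with $c$ small then ensures this remainder is strictly dominated by the main term, yielding the desired inequality with some $c'\in(0,1)$.
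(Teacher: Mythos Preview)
Your approach is correct and follows a genuinely different route from the paper's. You identify $\int_B-\int_A$ as half the nonlocal mean curvature of the subgraph of $v$ at $p$, integrate the vertical variable exactly through the odd bounded function $H$, and then Taylor-expand $H$ to extract the one-dimensional fractional Laplacian of $v$ as main term; the delicate point is the cubic remainder, which you rescue via the cancellation $(\eta_+/t)^3+(\eta_-/t)^3=O(m^3 t)$ near $t=0$ (after scaling $t=(p_1+c_m)\tau$ this becomes the uniform bound $|R|\le C\mu^3\tau$ for small $\tau$, with $\mu=m(p_1+c_m)^{\gamma-1}\le m$).

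The paper instead sidesteps the remainder analysis with a simpler geometric device. Since $|y-p|\ge|y_1-p_1|$ trivially, one has $\int_A\le H:=\int_{q_1}^\infty\frac{\Lambda(y_1)-v(y_1)}{|y_1-p_1|^{2+2s}}\,dy_1$. For $B$, the key observation is that every $y\in B$ satisfies $|y_2-p_2|\le m|y_1-p_1|$ (because $B$ sits between the horizontal axis and the tangent line, whose slope is $v'(p_1)=m(p_1+c_m)^{\gamma-1}\le m$); hence $|y-p|\le(1+Cm)|y_1-p_1|$ on $B$, giving $\int_B\ge(1-Cm)I$ with $I:=\int_{-\infty}^{q_1}\frac{-\Lambda(y_1)}{|y_1-p_1|^{2+2s}}\,dy_1$. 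The difference $H-I$ is then recognised directly as the principal-value integral $\int_\R\frac{v(p_1)-v(y_1)}{|y_1-p_1|^{2+2s}}\,dy_1$, and Lemma~\ref{TOR} finishes the job. The single multiplicative error $(1-Cm)$ plays the role your cubic remainder does, but it comes in one line rather than from a cancellation argument. Your method is more conceptual and makes the curvature interpretation explicit; the paper's is shorter and avoids any Taylor control on $H$. Both need $m\lesssim\epsilon_0$ for the same reason: the main term carries a factor $\epsilon_0$ from Lemma~\ref{TOR}, while the error terms do not.
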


\begin{proof} First of all, we observe that~$|y-p|\ge |y_1-p_1|$,
therefore
\begin{equation}\label{FI6}
\int_{A} \frac{dy}{|y-p|^{2+2s}} \le 
\int_{A} \frac{dy}{|y_1-p_1|^{2+2s}}
= \int_{q_1}^{+\infty} \frac{\Lambda(y_1)-v(y_1)}{|y_1-p_1|^{2+2s}}\,dy_1 
=:H.\end{equation}
Recalling~\eqref{DEF:v1} and~\eqref{DEF:v2}, we have that
\begin{eqnarray*}
H &=&
\int_{q_1}^{+\infty} \frac{
m\,(p_1+c_m)^{\gamma-1} (y_1-p_1)+
\gamma^{-1} m\,(p_1+c_m)^\gamma_+ -\gamma^{-1}
m\,(y_1+c_m)^\gamma_+  }{|y_1-p_1|^{2+2s}}\,dy_1 
\\ &=& \frac{ m\,(p_1+c_m)^{\gamma} }{\gamma}
\int_{q_1}^{+\infty} \frac{
\gamma\,(p_1+c_m)^{-1}(y_1-p_1)+1-
(p_1+c_m)^{-\gamma}(y_1+c_m)^\gamma_+ }{|y_1-p_1|^{2+2s}}\,dy_1.
\end{eqnarray*}
Now we recall~\eqref{q1q1}
and use the change of variable from the variable~$y_1$
to the variable~$t$ given by
\begin{equation}\label{SU:id}
y_1+c_m = (p_1+c_m)(t+1).\end{equation}
In this way, we obtain that
\begin{equation*} H
=
\frac{ m}{\gamma \,(p_1+c_m)^{1+2s-\gamma}}
\int_{-\frac{1}{\gamma}}^{+\infty} \frac{
\gamma t+1-
(t+1)^\gamma_+ }{|t|^{2+2s}}\,dt=\frac{C_A\, m}{(p_1+c_m)^{1+2s-\gamma}},\end{equation*}
where
\begin{equation*} C_A := 
\int_{-\frac{1}{\gamma}}^{+\infty} \frac{
\gamma t+1-
(t+1)^\gamma_+ }{|t|^{2+2s}}\,dt .\end{equation*}
Therefore, recalling \eqref{FI6}, we conclude that
\begin{equation}\label{JKUFG}
\int_{A} \frac{dy}{|y-p|^{2+2s}} \le
\frac{C_A\, m}{(p_1+c_m)^{1+2s-\gamma}}.\end{equation}
Now we claim that
\begin{equation}\label{POg33h}
{\mbox{if $y\in B$, then~$|y_2-p_2|\le m |y_1-p_1|$.}}
\end{equation}
To prove this, we take~$y\in B$. Then~$\Lambda(y_1)< y_2<0$, therefore,
since~$p_2\ge0$, we have
$$ |y_2-p_2|=p_2-y_2\le p_2-\Lambda(y_1)=v(p_1)-
\Big( v'(p_1) (y_1-p_1)+ v(p_1) \Big) 
\le m(p_1+c_m)^{\gamma-1} |y_1-p_1|.$$
Now we have that~$p_1+c_m\ge 1$, by our assumptions.
Hence, since~$\gamma-1<0$,
we conclude that~$ |y_2-p_2|\le m|y_1-p_1|$,
thus proving~\eqref{POg33h}.

As a consequence of~\eqref{POg33h}, we have that if~$y\in B$
then~$|y-p|\le (1+Cm)|y_1-p_1|$, for some~$C>0$, and therefore
\begin{equation}\label{Y67P}
\int_{B} \frac{dy}{|y-p|^{2+2s}} \ge
(1-Cm) \int_{B} \frac{dy}{|y_1-p_1|^{2+2s}}=(1-Cm)\,I ,\end{equation}
up to renaming~$C>0$, where
$$ I:=\int_{B} \frac{dy}{|y_1-p_1|^{2+2s}}=
\int_{-\infty}^{q_1} \frac{-\Lambda(y_1)}{|y_1-p_1|^{2+2s}}\,dy_1.$$
Recalling the definition of~$H$ in~\eqref{FI6}, we have that
$$ J:= H-I =
\int_{q_1}^{+\infty} \frac{\Lambda(y_1)-v(y_1)}{|y_1-p_1|^{2+2s}}\,dy_1
+
\int_{-\infty}^{q_1} \frac{\Lambda(y_1)}{|y_1-p_1|^{2+2s}}\,dy_1.$$
Accordingly, since~$v(y_1)=0$ if~$y_1\le q_1$, we obtain that
$$ J= \int_{-\infty}^{+\infty} 
\frac{\Lambda(y_1)-v(y_1)}{|y_1-p_1|^{2+2s}}\,dy_1
= \int_{-\infty}^{+\infty} 
\frac{v(p_1)-v(y_1)}{|y_1-p_1|^{2+2s}}\,dy_1,$$
where we have used~\eqref{DEF:v2} in the last identity and the
integrals are taken in the principal value sense.
Hence, we use \eqref{DEF:v1} and the substitution in~\eqref{SU:id},
and we conclude that
$$ J= 
\frac{m}{\gamma}
\int_{-\infty}^{+\infty}
\frac{ (p_1+c_m)^\gamma - (y_1+c_m)^\gamma_+}{|y_1-p_1|^{2+2s}}\,dy_1
=
\frac{m}{\gamma (p_1+c_1)^{1+2s-\gamma}}
\int_{-\infty}^{+\infty}
\frac{ 1- (t+1)^\gamma_+}{|t|^{2+2s}}\,dt =
-\frac{C_B\, m}{(p_1+c_m)^{\frac{1}{2}+s-\epsilon_0}},$$
where
$$ C_B:= \int_{-\infty}^{+\infty}
\frac{ (t+1)^\gamma_+-1}{|t|^{2+2s}}\,dt.$$
{F}rom Lemma~\ref{TOR}, we have that~$C_B \ge c_\star \epsilon_0$,
for some~$c_\star>0$. As a consequence,
$$ I = H-J = 
\frac{(C_A+C_B)\, m}{(p_1+c_m)^{\frac{1}{2}+s-\epsilon_0}}
\ge \frac{(C_A+c_\star\epsilon_0)\, m}{(p_1+c_m)^{\frac{1}{2}+s-\epsilon_0}},$$
and so, by~\eqref{Y67P}
\begin{equation*}
\int_{B} \frac{dy}{|y-p|^{2+2s}} \ge
\frac{(1-Cm)(C_A+c_\star\epsilon_0)\, m}{(p_1+c_m)^{\frac{1}{2}+s-\epsilon_0}}
.\end{equation*}
Putting together this and~\eqref{JKUFG}, we obtain that
$$
\int_{B} \frac{dy}{|y-p|^{2+2s}} - \int_{A} \frac{dy}{|y-p|^{2+2s}}\ge
\frac{\big[ (1-Cm)(C_A+c_\star\epsilon_0)-C_A\big]\, m}{(p_1+c_m)^{\frac{1}{2}+s-\epsilon_0}},$$
which implies the desired result.
\end{proof}

\begin{figure}
    \centering
    \includegraphics[height=5.5cm]{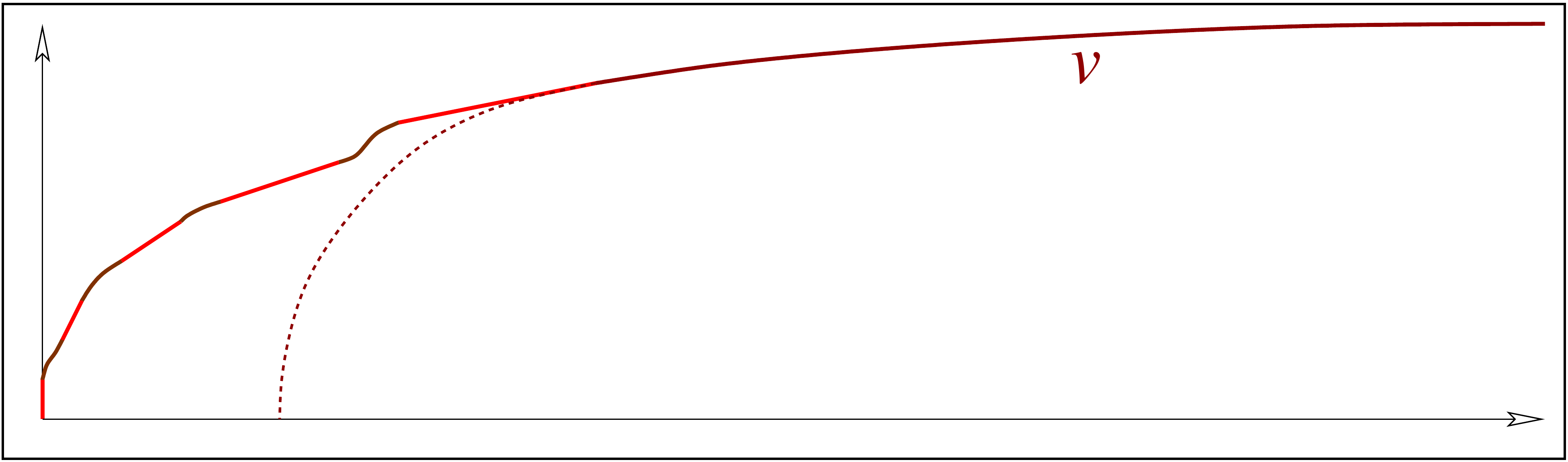}
    \caption{The barrier constructed in Proposition~\ref{BAR-gr}.}
    \label{SK}
\end{figure}

Now we are in the position of
improving the behavior at infinity of the barrier constructed in
Proposition~\ref{IT:BARR}. The idea is to ``glue'' the barrier
of Proposition~\ref{IT:BARR} with the graph of
the ``right'' power function at infinity. The construction
is sketched in Figure~\ref{SK} and the precise result obtained
is the following:

\begin{proposition}\label{BAR-gr}
Let~$\epsilon_0\in(0,1-\gamma_0)$.
There exists~$c>0$ such that if~$m\in (0,c\epsilon_0]$,
then the following statement holds.

There exist~$a_m>0$, $d_m>1>\alpha_m>0$, $c_m\in\R$
and a set~$E_m\subset\R^2$
with $(\partial E_m)\cap \{x_2>0\}$
of class~$C^{1,1}$
and such that:
\begin{eqnarray*}
&& E_m \cap \{x_1 < 0\} = (-\infty,0)\times(-\infty,0), \\
&& E_m\supseteq \R\times (-\infty,0),\\
&& E_m\supseteq (0,+\infty)\times (-\infty,a_m],\\
&& E_m\cap \{\alpha_m\le x_1 \le d_m\} =
\{ x_2<v'(d_m)(x_1-d_m)+v(d_m),\;\alpha_m\le x_1 \le d_m\}\\
{\mbox{and }}&& E_m\cap \{x_1 >d_m\} = \{x_2<v(x_1), \;x_1 >d_m\}
,\end{eqnarray*}
where~$v$ was introduced in~\eqref{DEF:v1}.
Moreover, there exist~$c'\in(0,1)$ and~$N>1$ such that
\begin{equation}\label{PLFG}
\int_{\R^2} \frac{\chi_{E_m}(y)-\chi_{E_m^c}(y)}{|y-p|^{2+2s}}\,dy
\ge \frac{c'\,\epsilon_0\,m}{|p|^{\frac{1}{2}+s-\epsilon_0}},\end{equation}
for any~$p\in (\partial E_m)\cap \{x_1>\frac{d_m}{N}\}$, and
\begin{equation}\label{PLFGbar}
\int_{\R^2} \frac{\chi_{E_m}(y)-\chi_{E_m^c}(y)}{|y-p|^{2+2s}}\,dy
\ge \frac{c'\,m}{d_m^{1-\gamma}\, |p|^{2s}},\end{equation}
for any~$p\in (\partial E_m)\cap \{x_1\in\left(0,\frac{d_m}{N}\right]\}$.
\end{proposition}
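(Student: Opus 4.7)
The strategy is to glue the power function~$v$ from~\eqref{DEF:v1} onto the piecewise-linear barrier~$E_K$ of Proposition~\ref{IT:BARR}. I would pick~$K$ large and apply Proposition~\ref{IT:BARR} to obtain~$E_K$, whose outer boundary is eventually the straight line of slope~$1/\ell_K$ (with $q_K\le K^{-1}$ and $c_K\sim\bar c/\ell_K$). Next I would choose~$c_m\in\R$ and~$d_m>0$ so that the tangent line~$\Lambda_{d_m}$ to~$v$ at~$d_m$ has exactly this same slope:
$$ v'(d_m)=m(d_m+c_m)^{\gamma-1}=\frac{1}{\ell_K},\qquad \text{i.e.}\quad d_m+c_m=(m\ell_K)^{\frac{1}{1-\gamma}},$$
and such that the hypothesis $p_1\ge 1-c_m$ of Lemma~\ref{TECH-LM1} is satisfied for all $p_1\ge d_m/N$, where $N$ is a large constant to be fixed at the end. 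Using the remaining freedom in~$c_m$ together with a dilation of~$E_K$ by an appropriate factor~$\lambda>0$, I can arrange that the dilated outer linear piece of~$E_K$ coincides exactly with a segment of~$\Lambda_{d_m}$ on an interval~$[\alpha_m,d_m]$ with~$0<\alpha_m<d_m$, while the dilated vertical base remains at~$x_1=0$. I then define
$$ E_m:=\big(\lambda E_K\big)\cap\{x_1\le d_m\}\ \cup\ \big\{x_2<v(x_1),\ x_1>d_m\big\}.$$
The two pieces agree in value \emph{and} derivative at~$x_1=d_m$, so $\partial E_m\cap\{x_2>0\}$ is~$C^{1,1}$, and every listed geometric property of~$E_m$ follows directly from the corresponding one of $\lambda E_K$, with $a_m:=\lambda a_K$.

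For the curvature bound, I would distinguish three regimes according to the location of~$p\in(\partial E_m)\cap\{x_1>0\}$. If $p_1>d_m$ (so $p=(p_1,v(p_1))$), the tangent line~$\Lambda$ to~$v$ at~$p$ has zero fractional curvature at~$p$, and the local discrepancy between~$\partial E_m$ and the subgraph of~$\Lambda$ is exactly described by the sets~$A$ and~$B$ of~\eqref{A:Bdef}. Lemma~\ref{TECH-LM1} then produces the leading positive contribution $c'\epsilon_0 m(p_1+c_m)^{-1/2-s+\epsilon_0}$, of the required order~$|p|^{-1/2-s+\epsilon_0}$. The remaining discrepancy between~$E_m$ and the subgraph of~$\Lambda$ is supported near $\{x_1=0\}$ (coming from the curved base of~$\lambda E_K$) and far from~$p$; a direct bound using $|y-p|\ge c|p|$ shows that its total contribution is at most~$Cm|p|^{-1/2-s}$, \emph{without} the factor~$\epsilon_0$, and is therefore absorbed by the main term as soon as $m\le c\epsilon_0$ with~$c$ small. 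The case $p_1\in(d_m/N,d_m]$ (so $p\in\Lambda_{d_m}$) is analogous: the straight piece contributes zero locally, the power-function piece $\{x_2<v(x_1),\ x_1>d_m\}$ gives a positive contribution through the same computation underlying Lemma~\ref{TECH-LM1}, and the base error is controlled as before. Finally, if $p_1\in(0,d_m/N]$, the set~$E_m$ coincides with $\lambda E_K$ in a ball of radius~$\gtrsim d_m$ around~$p$, and Proposition~\ref{IT:BARR} combined with the scaling invariance of the fractional curvature yields
$$ \int_{\R^2}\frac{\chi_{E_m}(y)-\chi_{E_m^c}(y)}{|y-p|^{2+2s}}\,dy\ge \frac{c_K}{|p|^{2s}}\sim\frac{\bar c\,m\,d_m^{\gamma-1}}{|p|^{2s}},$$
which matches~\eqref{PLFGbar} exactly; the modification region $\{x_1>d_m\}$, at distance $\ge d_m\ge N|p|$ from~$p$, contributes at most~$O(N^{-2s})$ times the main term and is negligible for~$N$ large.

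The main obstacle is the quantitative parameter balance: one must simultaneously pick~$K$, $\lambda$, $c_m$, $m$, $\epsilon_0$, and~$N$ so that the dilated linear piece of~$E_K$ actually reaches the point~$(d_m,v(d_m))$ with $\alpha_m<d_m$ (which, given that the required dilation factor satisfies $\lambda q_K=(d_m(1-\gamma)+c_m)/\gamma$, forces~$c_m$ to be chosen \emph{large and negative} rather than simply $c_m\in[0,1]$), and so that the Lemma~\ref{TECH-LM1} gain $\epsilon_0 m$ dominates the base errors in the regime $p_1>d_m/N$. The sharp asymptotics~$c_K\sim\bar c\ell_K^{-1}$ and the bound~$q_K\le K^{-1}$ from Proposition~\ref{IT:BARR} are essential for this balance to close.
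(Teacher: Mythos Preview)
Your overall strategy---glue the power $v$ onto the piecewise-linear barrier $E_K$ of Proposition~\ref{IT:BARR}, then treat three regimes for~$p$---matches the paper. The outer regime $p_1>d_m$ and the inner regime $p_1\le d_m/N$ are handled essentially as you describe (though your ``base error'' near $\{x_1=0\}$ in the first case is not actually an error: since $T\setminus E_m\subseteq A$ and $E_m\setminus T\supseteq B$ hold as set inclusions for the tangent subgraph~$T$, Lemma~\ref{TECH-LM1} applies directly, with no remainder to absorb).

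The genuine gap is the intermediate regime $p_1\in[d_m/N,d_m)$. Here $p$ lies on the straight segment~$\Lambda_{d_m}$, not on the graph of~$v$, so Lemma~\ref{TECH-LM1} does not apply to~$v$ at~$p$: its hypothesis requires $p_2=v(p_1)$, and its proof hinges on the substitution $y_1+c_m=(p_1+c_m)(t+1)$, which only collapses the integral when the tangent line is the tangent to~$v$ \emph{at~$p$}. Moreover, your claim that ``the power-function piece $\{x_2<v(x_1),\ x_1>d_m\}$ gives a positive contribution'' has the wrong sign: since $v<\Lambda_{d_m}$ for $x_1>d_m$ by concavity, this region lies in $T\setminus E_m$ and contributes \emph{negatively}. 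The paper resolves this by constructing, for each such~$p$, a new power function
\[
v_p(x_1)=\frac{m_p\,(x_1+c_{m,p})_+^\gamma}{\gamma},\qquad m_p<m,\quad c_{m,p}>c_m,
\]
tangent to $\Lambda_{d_m}$ precisely at~$p$; one checks (using concavity and the explicit formulas for $m_p$, $c_{m,p}$) that the subgraph of~$v_p$ is contained in~$E_m$, and then Lemma~\ref{TECH-LM1} applied to~$v_p$ at~$p$ yields~\eqref{PLFG}, with the lower bound $m_p\ge m/(2N)^{1-\gamma}$ controlling the constant. This auxiliary-power construction is the key idea missing from your sketch.
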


\begin{proof} We use Proposition~\ref{IT:BARR} with a large~$K$.
In this way, we may suppose that~$\ell_K\ge K$ is as large as
we wish, while~$q_K\le K^{-1}$ is as small as we wish.
We fix~$N>0$, to be chosen
appropriately large (independently on~$K$) and we set
\begin{equation}\label{UIj:M}
\begin{split}
& d_m:=N^2\\
{\mbox{and }}\quad& m:= 
\ell_K^{-1} \big( \gamma\,(d_m+q_K)\big)^{1-\gamma}
.\end{split}\end{equation}
We stress that~$m>0$ is small when~$K$ is large, since
$$ m\le K^{-1} \big( \gamma\,(N^2+K^{-1})\big)^{1-\gamma},$$
that is small when~$K$ is large (much larger than the fixed~$N$).
Hence Proposition~\ref{IT:BARR} provides a set, say~$F_m$,
whose boundary agrees with a straight line~$\lambda_m$
of the form~$x_2=\ell_K^{-1}(x_1+q_K)$
when~$x_1\ge \alpha_m$, for suitable~$q_K\in[0,K^{-1}]$ and~$\alpha_m>0$.

Now we join such a straight line with the function~$v$ defined
in~\eqref{DEF:v1}, at the point~$(d_m,v(d_m))$, with~$\beta_m:=d_m-\alpha_m$
suitably large. To this goal, we define
\begin{equation}\label{J56GFJJ}
c_m:=(\gamma-1)d_m+\gamma q_K.\end{equation}
Notice that
\begin{equation}\label{HJ:LKJ}
d_m+c_m =\gamma(d_m+q_K).
\end{equation}
This and~\eqref{UIj:M} give that
$$ v(d_m)=\frac{m\,(d_m+c_m)^\gamma_+ }{\gamma}
= \frac{m\,\big(\gamma (d_m+q_K)\big)^\gamma }{\gamma}
=
\ell_K^{-1} \big( \gamma\,(d_m+q_K)\big)^{1-\gamma}
\cdot\frac{\big(\gamma (d_m+q_K)\big)^\gamma }{\gamma}
=\ell_K^{-1}(d_m+q_K),$$
which says that~$v$ meets the straight line~$\lambda_m$
at the point~$(d_m,v(d_m))$. 

Also, by~\eqref{UIj:M} and~\eqref{HJ:LKJ}, we see that
$$ v'(d_m)= m(d_m+c_m)^{\gamma-1} = 
\ell_K^{-1} \big( \gamma\,(d_m+q_K)\big)^{1-\gamma}\cdot
\big( \gamma\,(d_m+q_K)\big)^{\gamma-1} =\ell_K^{-1},$$
therefore $v$ and~$\lambda_m$ have the same slope at
the meeting point~$(d_m,v(d_m))$.
Therefore, the set~$E_m$ which coincides with~$F_m$
when~$\{x_1\le d_m\}$ and with the subgraph of~$v$ when~$\{x_1>d_m\}$
satisfy the geometric properties listed in the statement of
Proposition~\ref{BAR-gr}, and it only remains to prove~\eqref{PLFG}
and~\eqref{PLFGbar}.

For this scope, we first consider the case in which~$p_1\ge d_m$.
Then, we take~$\Lambda$ as in~\eqref{DEF:v2}
and $A$ and~$B$ as in~\eqref{A:Bdef}. Let also~$T$ be the subgraph
of~$\Lambda$. Then, by symmetry
$$ \int_{\R^2} \frac{\chi_{T}(y)-\chi_{T^c}(y)}{|y-p|^{2+2s}}\,dy=0.$$
Notice that~$T\setminus E_m\subseteq A$
and~$E_m\setminus T \supseteq B$, therefore
\begin{eqnarray*}
&& \int_{\R^2} \frac{\chi_{E_m}(y)-\chi_{E_m^c}(y)}{|y-p|^{2+2s}}\,dy\\
&=& 
\int_{\R^2} \frac{\chi_{E_m}(y)-\chi_{E_m^c}(y)-\chi_{T}(y)+\chi_{T^c}(y)
}{|y-p|^{2+2s}}\,dy\\
&=&2
\int_{\R^2} \frac{\chi_{E_m\setminus T}(y) - \chi_{T\setminus E_m}(y)
}{|y-p|^{2+2s}}\,dy\\
&\ge& 2
\int_{\R^2} \frac{\chi_{B}(y) - \chi_{A}(y)
}{|y-p|^{2+2s}}\,dy\\
&=& 2\left(
\int_{B} \frac{dy}{|y-p|^{2+2s}}-\int_{A} \frac{dy}{|y-p|^{2+2s}}
\right).
\end{eqnarray*}
Notice also that
\begin{equation}\label{HJLKK9}
1-c_m = 1 -(\gamma-1)d_m-\gamma q_K\le 1-\gamma d_m +d_m\le d_m\end{equation}
thanks to \eqref{J56GFJJ}
and \eqref{UIj:M}. Hence, in this case, $p_1\ge d_m\ge 1-c_m$,
and so the assumptions of Lemma~\ref{TECH-LM1} are fulfilled.
Therefore, by~\eqref{T:SPP:1},
\begin{equation}\label{JK:PLL}
\int_{\R^2} \frac{\chi_{E_m}(y)-\chi_{E_m^c}(y)}{|y-p|^{2+2s}}\,dy
\ge
\frac{c' \,\epsilon_0\, m}{(p_1+c_m)^{\frac{1}{2}+s-\epsilon_0}},
\end{equation}
for some~$c'>0$. 
Now we notice that, by~\eqref{J56GFJJ} and \eqref{UIj:M},
$$ p_1+c_m = p_1+ (\gamma-1)d_m+\gamma q_K\le 2 p_1 \le 2|p|.$$
Using this and~\eqref{JK:PLL}, we see that~\eqref{PLFG}
holds true in this case.

Hence, it remains to prove~\eqref{PLFG} and~\eqref{PLFGbar}
when~$p_1\in(0,d_m)$.
In this case, we use that, by
Proposition~\ref{IT:BARR},
$$ \int_{\R^2} \frac{\chi_{F_m}(y)-\chi_{F_m^c}(y)}{|y-p|^{2+2s}}\,dy
\ge \frac{\bar c}{\ell_K\,|p|^{2s}},$$
for some~$\bar c>0$.
Also~$F_m\setminus E_m$ coincides with the portion comprised above
the graph of~$v$ and below the straight line~$\lambda_m$, that is 
$$ G:=\{ x_1> d_m,\; v(x_1)< x_2< v'(d_m)(x_1-d_m)+v(d_m)\},$$
while~$E_m\setminus F_m$ is empty. Therefore
\begin{equation}\label{PL:PL1}
\begin{split}
& \frac{\bar c}{\ell_K\,|p|^{2s}}
-
\int_{\R^2} \frac{\chi_{E_m}(y)-\chi_{E_m^c}(y)}{|y-p|^{2+2s}}\,dy
\le
\int_{\R^2} \frac{\chi_{F_m}(y)-\chi_{F_m^c}(y)-
\chi_{E_m}(y)+\chi_{E_m^c}(y)}{|y-p|^{2+2s}}\,dy
\\ &\quad=
2\int_{G} \frac{dy}{|y-p|^{2+2s}}
\le  2\int_{G} \frac{dy}{|y_1-p_1|^{2+2s}}
\\ &\quad= 2\int_{d_m}^{+\infty} \frac{v'(d_m)(y_1-d_m)+v(d_m)-v(y_1)
}{ |y_1-p_1|^{2+2s} }\,dy_1.\end{split}\end{equation}
Now, we distinguish the cases~$p_1\in \left(0,\frac{d_m}{N}\right)$
and~$p_1\in\left[\frac{d_m}{N},d_m\right)$.

If~$p_1\in \left(0,\frac{d_m}{N}\right)$, we use~\eqref{PL:PL1}
and observe that~$v(y_1)\ge v(d_m)$ if~$y_1\ge d_m$, to conclude that
\begin{eqnarray*}
&&\frac{\bar c}{\ell_K\,|p|^{2s}}
-
\int_{\R^2} \frac{\chi_{E_m}(y)-\chi_{E_m^c}(y)}{|y-p|^{2+2s}}\,dy
\le
2 v'(d_m) \int_{d_m}^{+\infty} \frac{y_1-d_m}{ |y_1-p_1|^{2+2s} }\,dy_1
\\ &&\quad\le
2 v'(d_m) \int_{d_m}^{+\infty} \frac{dy_1}{ (y_1-p_1)^{1+2s} }
\le\frac{Cm\, (d_m+c_m)^{\gamma-1} }{(d_m-p_1)^{2s}}
\\ &&\quad\le \frac{Cm\, (d_m+c_m)^{\gamma-1} }{d_m^{2s}},\end{eqnarray*}
up to renaming constants. Therefore, recalling~\eqref{UIj:M}
and~\eqref{HJ:LKJ},
\begin{equation}\label{JK:PJgH}
\begin{split}
& \int_{\R^2} \frac{\chi_{E_m}(y)-\chi_{E_m^c}(y)}{|y-p|^{2+2s}}\,dy\ge
\frac{\bar c\,m}{ \big( \gamma\,(d_m+q_K)\big)^{1-\gamma}\,|p|^{2s}} - 
\frac{Cm }{ (d_m+c_m)^{1-\gamma}\,
d_m^{2s}} \\&\qquad= \frac{m}{ (d_m+c_m)^{1-\gamma} }\left(\frac{\bar c}{|p|^{2s}}-
\frac{C}{d_m^{2s}}
\right).\end{split}\end{equation}
Now we observe that, when~$p_1\le \frac{d_m}{N}$, we have
that~$p_2\le 1 +\ell_K^{-1}\left( \frac{d_m}{N}+q_K\right)\le
2+\frac{d_m}{N}\le \frac{d_m}{N^{1/2}}$, and so~$|p|\le \frac{d_m}{N^{1/4}}$.
Therefore
$$ \frac{C}{d_m^{2s}} \le \frac{C}{N^{s/2}\,|p|^{2s}}\le
\frac{\bar c}{2\,|p|^{2s}},$$
if~$N$ is large enough (independently on~$m$ and~$K$).
This and~\eqref{JK:PJgH} imply that
\begin{equation*} \int_{\R^2} \frac{\chi_{E_m}(y)-\chi_{E_m^c}(y)}{|y-p|^{2+2s}}\,dy\ge
\frac{m\,\bar c}{ 2(d_m+c_m)^{1-\gamma} |p|^{2s}}.\end{equation*}
By recalling~\eqref{HJ:LKJ},
we see that the latter estimate implies~\eqref{PLFGbar}
in this case.

\begin{figure}
    \centering
    \includegraphics[height=5.9cm]{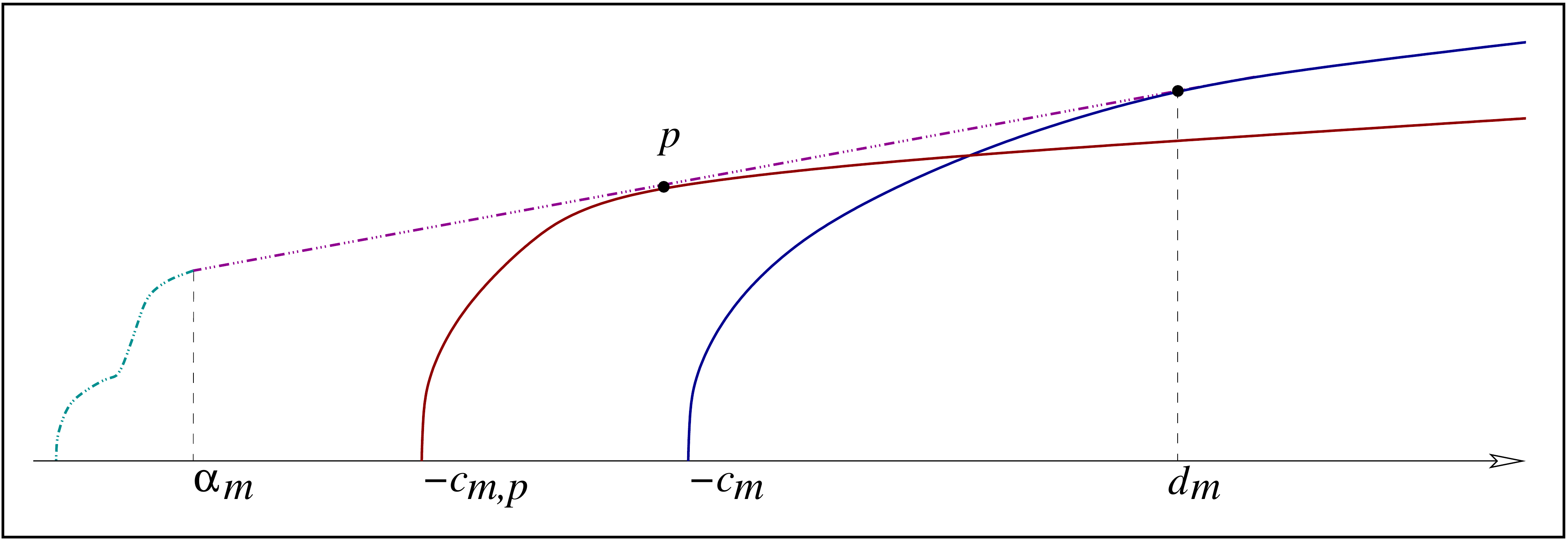}
    \caption{A power-like function tangent at~$p\in \partial E_m$,
with $p_1\in\left[\frac{d_m}{N},d_m\right)$.}
    \label{VTAN}
\end{figure}

It remains to prove~\eqref{PLFG} when~$p_1\in\left[\frac{d_m}{N},d_m\right)$.
In this case, we argue like this.
For any $p=(p_1,p_2)\in \partial F_m$
with~$p_1\in\left[\frac{d_m}{N},d_m\right)$, we have~$p_2=
v'(d_m)(p_1-d_m)+v(d_m)$,
and we
define~$v_p$
the power function whose graph passes
through~$p$ and tangent to the line~$\{ x_2 = v'(d_m)(x_1-d_m)+v(d_m)\}$
at~$p$, see Figure~\ref{VTAN}.
Explicitly, we define
\begin{eqnarray*}
&& v_p(x_1):=\frac{m_p \,(x_1+c_{m,p} )^\gamma_+ }{\gamma},\\
{\mbox{with }}&& m_p:=(\gamma p_2)^{1-\gamma} m^\gamma
(d_m+c_m)^{\gamma(\gamma-1)}\\
{\mbox{and }}&& c_{m,p}:=\frac{\gamma p_2}{m\,(d_m+c_m)^{\gamma-1}}-p_1.
\end{eqnarray*}
We remark that
$$ v_p (p_1)=p_2 {\mbox{ and }}
v_p'(p_1) = v'(d_m).$$
Since~$p_2< v(d_m)=m\,\gamma^{-1}(d_m+c_{m} )^\gamma$, we have that
\begin{equation}\label{LP:mp small}
m_p<\big(
m\,(d_m+c_{m} )^\gamma \big)^{1-\gamma} m^\gamma
(d_m+c_m)^{\gamma(\gamma-1)}=m.
\end{equation}
Moreover~$p_2=v'(d_m)(p_1-d_m)+v(d_m)=
m\,(d_m+c_m)^{\gamma-1} (p_1-d_m) + m\,\gamma^{-1} (d_m+c_m)^{\gamma}$,
therefore
\begin{equation}\label{HJLKK9-cmp:PRE}
\begin{split} c_{m,p}\;&=\,\frac{\gamma 
m\,(d_m+c_m)^{\gamma-1} (p_1-d_m) + m\,(d_m+c_m)^{\gamma}
}{m\,(d_m+c_m)^{\gamma-1}}-p_1
\\&= \,\gamma(p_1-d_m)+d_m+c_m-p_1= (1-\gamma)(d_m-p_1)+c_m.\end{split}\end{equation}
Hence, since~$p_1<d_m$,
\begin{equation}\label{HJLKK9-cmp}
c_{m,p}> \,c_m
.\end{equation}
Also, from~\eqref{J56GFJJ} and~\eqref{HJLKK9-cmp:PRE},
\begin{equation} \label{KL:PP-2}
c_{m,p} = (1-\gamma)(d_m-p_1)+
(\gamma-1)d_m+\gamma q_K = -(1-\gamma)\,p_1+\gamma q_K.\end{equation}
Therefore, since~$p_1\ge\frac{d_m}{N}=N$,
\begin{equation}\label{HJ:pl}
c_{m,p} \le -(1-\gamma)\,N+\gamma q_K\le -(1-\gamma)\,N+1
< -1 \le -\alpha_m,
\end{equation}
provided that~$N$ is large enough.

Furthermore, using again~\eqref{KL:PP-2},
\begin{equation}\label{PKFDc}
p_1+c_{m,p} = \gamma p_1+\gamma q_K\ge \frac{\gamma d_m}{N}=\gamma N\ge 1.
\end{equation}
In addition, 
$$ m_p (p_1+c_{m,p})_+^{\gamma-1}=
v_p'(p_1) = v'(d_m) = m (d_m+c_m)_+^{\gamma-1},$$
therefore, by~\eqref{HJ:LKJ}
and~\eqref{KL:PP-2},
\begin{equation}\label{SRTG-9}
\begin{split}
& \frac{m_p}{m} =\frac{(p_1+c_{m,p})_+^{1-\gamma} }{ (d_m+c_m)_+^{1-\gamma} }
=\frac{(\gamma p_1+\gamma q_K)_+^{1-\gamma} }{ (\gamma(d_m+q_K))_+^{1-\gamma} }
= \frac{(p_1+q_K)_+^{1-\gamma} }{ (d_m+q_K)_+^{1-\gamma} }
\\ 
&\qquad\ge \frac{\left(\frac{d_m}{N}\right)_+^{1-\gamma} }{ 
(2d_m)_+^{1-\gamma} }
= \frac{1}{(2N)^{1-\gamma}}.
\end{split}\end{equation}
Now we claim that
\begin{equation}\label{OILJ:889}
{\mbox{if~$x_1\ge d_m$, then $v_p(x_1)\le v(x_1)$.}}\end{equation}
To prove this,
we use~\eqref{HJLKK9}
and~\eqref{HJLKK9-cmp} to see that
$$ x_1+c_{m,p}\ge x_1+c_m\ge d_m+c_m\ge1,$$ therefore
$$ \psi(x_1):=\gamma\,\big(v_p(x_1)-v(x_1)\big)=
{m_p \,(x_1+c_{m,p} )^\gamma }-
{m \,(x_1+c_{m} )^\gamma }.$$
Also, $v_p$ is concave, therefore
\begin{eqnarray*}
&& v_p(d_m) \le v_p (p_1) + v_p'(p_1)(d_m-p_1)
=p_2+v'(d_m)(d_m-p_1)\\
&&\qquad=v'(d_m)(p_1-d_m)+v(d_m)+v'(d_m)(d_m-p_1)=v(d_m).\end{eqnarray*}
As a consequence, $ \psi(d_m)\le 0$. Moreover, for any~$x_1\ge d_m$,
$$ \psi'(x_1)=
{m_p \gamma \,(x_1+c_{m,p} )^{\gamma-1} }-
{m \gamma \,(x_1+c_{m} )^{\gamma-1} }
\le m\gamma\,\big[ {(x_1+c_{m,p} )^{\gamma-1} }-
{(x_1+c_{m} )^{\gamma-1} } \big]\le0,$$
thanks to~\eqref{LP:mp small} and~\eqref{HJLKK9-cmp}. {F}rom
these considerations, we obtain that~$\psi\le0$
in~$[d_m,+\infty)$, which proves~\eqref{OILJ:889}.

Also, by concavity,
\begin{equation}\label{COPP}
\begin{split}
& {\mbox{if $x_1\in[-c_{m,p},d_m]$, then}}\\
&\qquad v_p(x_1)\le
v_p'(p_1)(x_1-p_1)+v_p(p_1)=
v'(d_m)(x_1-p_1)+p_2
=v'(d_m)(x_1-d_m)+v(d_m).\end{split}\end{equation}
Now we claim that
\begin{equation}\label{is co}
{\mbox{the subgraph of~$v_p$ is contained in~$E_m$.}}
\end{equation}
To check this, let~$x=(x_1,x_2)$ be such that~$x_2< v_p(x_1)$.
Then, if~$x_1 < -c_{m,p}$ then~$v_p(x_1)=0$ and so~\eqref{is co}
plainly follows. If~$x_1\in [-c_{m,p},d_m]$,
then~\eqref{is co} is implied by~\eqref{HJ:pl} and \eqref{COPP}.
Finally, if~$x_1>d_m$, then~\eqref{is co}
is a consequence of~\eqref{OILJ:889}.

Hence, we define~$S:= \{x_2<v_p(x_1)\}$,
we use~\eqref{is co}
and Lemma~\ref{TECH-LM1} (which can be exploited in this
framework with the power-like function~$v_p$,
thanks to~\eqref{PKFDc}) and we obtain that
\begin{equation}\label{LGH:LK}
\int_{\R^2} \frac{\chi_{E_m}(y)-\chi_{E_m^c}(y)}{|y-p|^{2+2s}}\,dy
\ge
\int_{\R^2} \frac{\chi_{S}(y)-\chi_{S^c}(y)}{|y-p|^{2+2s}}\,dy
\ge \frac{c' \,\epsilon_0\, m_p}{(p_1+c_{m,p})^{\frac{1}{2}+s-\epsilon_0}},\end{equation}
for some~$c'>0$. Now we recall~\eqref{KL:PP-2} and we see that~$
p_1+c_{m,p}\le p_1\le |p|$. Using this and~\eqref{SRTG-9}
(notice that~$N$ has now been fixed),
we obtain~\eqref{PLFG} if~$p_1\in\left[\frac{d_m}{N},d_m\right)$
as a consequence of~\eqref{LGH:LK}.

This completes the proof of \eqref{PLFG} in all cases and finishes the
proof of Proposition~\ref{BAR-gr}.
\end{proof}

\section{Construction of compactly supported barriers}\label{SEC:CMP:SUPP}

In this section, we construct a suitable barrier for the fractional
mean curvature equation in the plane which is flat and horizontal
outside a vertical slab, and whose geometric properties
inside the slab are under control. Roughly speaking,
we will take the barrier constructed in Proposition~\ref{BAR-gr}
and a reflected version of it and join it smoothly in the middle.
The effect of this surgery is negligible at
the points of the barrier that are near the horizontal part,
and give a bounded contribution in the middle.

\begin{figure}
    \centering
    \includegraphics[height=3.2cm]{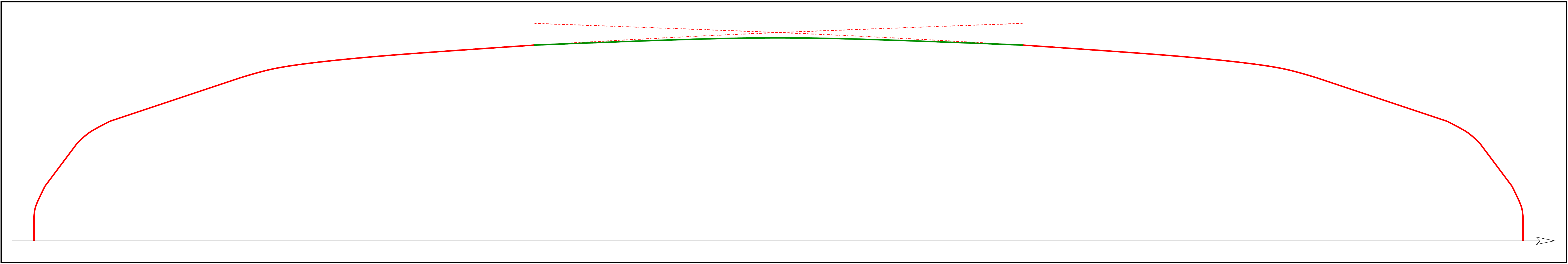}
    \caption{The barrier constructed in Proposition~\ref{LK:PO}.}
    \label{LLM}
\end{figure}

This barrier is described in Figure~\ref{LLM}
and the precise result obtained is the following:

\begin{proposition}\label{LK:PO}
Let~$\epsilon_0\in(0,1-\gamma_0)$.
There exists~$m_{\epsilon_0}>0$ such that if~$m\in (0,\,m_{\epsilon_0}]$
then the following statement holds.

There exist~$a_m>0$, $L_m>A_m>d_m>1$, $c_m\in\R$, $C_\star>0$
and a set~$F_m\subset\R^2$
with $(\partial F_m)\cap \{x_2>0\}$
of class~$C^{1,1}$ and
such that:
\begin{eqnarray*}
&& F_m \cap \{x_1 < 0\} = (-\infty,0)\times(-\infty,0), \\
&& F_m\supseteq \R\times (-\infty,0),\\
&& F_m\supseteq (0,L_m+1)\times (-\infty,a_m],\\
&& F_m \subseteq \{ x_2\le C_\star m\, L_m^{\frac{1}{2}+s+\epsilon_0} \}\\
{\mbox{and }}&& 
F_m\cap \{d_m<x_1<L_m\} = \{x_2<v(x_1), \;d_m<x_1 <L_m\}
,\end{eqnarray*}
where~$v$ was introduced in~\eqref{DEF:v1}.
In addition, one can suppose that
\begin{equation}\label{L emme}
L_m = 10 A_m \ge 2+m^{-1} + e^{\frac{1}{a_m}}.
\end{equation}
Moreover, the set~$F_m$ is even symmetric with respect to
the vertical axis~$\{x_1=L_m+1\}$, 
and
there exists $C'>0$ such that
\begin{equation}\label{PLFG-SYM}
\int_{\R^2} \frac{\chi_{F_m}(y)-\chi_{F_m^c}(y)}{|y-p|^{2+2s}}\,dy
\ge 0,\end{equation}
for any~$p\in (\partial F_m)\cap \{x_1\in (0,A_m)\}$,
and
\begin{equation}\label{PLFG-SYM:2}
\int_{\R^2} \frac{\chi_{F_m}(y)-\chi_{F_m^c}(y)}{|y-p|^{2+2s}}\,dy
\ge -\frac{C' m^{2s}}{L_m^{\frac{1}{2}+s-\epsilon_0}},\end{equation}
for any~$p\in (\partial F_m)\cap \{x_1\in\left[A_m, L_m+1\right]\}$.
\end{proposition}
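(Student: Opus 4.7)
The plan is to construct $F_m$ by truncating the barrier $E_m$ from Proposition~\ref{BAR-gr} at $x_1=L_m$, inserting a $C^{1,1}$ concave cap on $[L_m,L_m+1]$ that matches $v(L_m)$ with slope $v'(L_m)$ at the left endpoint and has slope zero at $L_m+1$, and extending by even reflection across the axis $\{x_1=L_m+1\}$. I set $A_m:=L_m/10$ and pick $L_m\gg d_m$ large enough that \eqref{L emme} holds and that the quantitative estimates below close. The geometric properties listed in the statement then follow at once from the construction together with the corresponding properties of $E_m$ in Proposition~\ref{BAR-gr}; in particular the height bound $F_m\subseteq\{x_2\le C_\star m L_m^{1/2+s+\epsilon_0}\}$ comes from the estimate $\eta(L_m+1)\le v(L_m)+v'(L_m)\le Cm L_m^{\gamma}$ on the cap height, where $\eta$ denotes the cap profile.

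For \eqref{PLFG-SYM}, I observe that $F_m$ coincides with $E_m$ on $\{x_1\le L_m\}$, so any $p\in(\partial F_m)\cap\{x_1\in(0,A_m)\}$ also belongs to $\partial E_m$ and the lower bounds of Proposition~\ref{BAR-gr} apply to it. The symmetric difference $E_m\,\triangle\, F_m$ is contained in $\{x_1>L_m\}$, and for $p_1<A_m=L_m/10$ one has $|y-p|\ge y_1-p_1\ge 9y_1/10$ on that region, so a direct computation gives
\begin{equation*}
\left|\int_{\R^2}\frac{\chi_{F_m}-\chi_{F_m^c}}{|y-p|^{2+2s}}\,dy-\int_{\R^2}\frac{\chi_{E_m}-\chi_{E_m^c}}{|y-p|^{2+2s}}\,dy\right|\le 2\int_{E_m\,\triangle\, F_m}\frac{dy}{|y-p|^{2+2s}}\le\frac{Cm}{L_m^{1/2+s-\epsilon_0}},
\end{equation*}
using $\int_{L_m}^{+\infty}v(y_1)\,y_1^{-2-2s}\,dy_1\sim m L_m^{\gamma-1-2s}=m L_m^{-1/2-s+\epsilon_0}$. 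Proposition~\ref{BAR-gr} provides the lower bound $c'\epsilon_0 m/|p|^{1/2+s-\epsilon_0}$ when $p_1>d_m/N$ (and $c'm/(d_m^{1-\gamma}|p|^{2s})$ when $p_1\le d_m/N$); since $|p|\le A_m=L_m/10$, the geometric factor $(L_m/|p|)^{1/2+s-\epsilon_0}\ge 10^{1/2+s-\epsilon_0}$ compensates the tail error once $L_m$ has been taken sufficiently large, depending on $\epsilon_0$, and \eqref{PLFG-SYM} follows.

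Finally, for \eqref{PLFG-SYM:2} the point $p$ sits in the peak region, where the tangent to $\partial F_m$ is nearly horizontal and Proposition~\ref{BAR-gr} cannot be applied as a black box. Here I would use the even symmetry of $F_m$ about $\{x_1=L_m+1\}$ to rewrite the integral as a principal-value integral on $\{y_1\le L_m+1\}$ with the symmetrized kernel $|y-p|^{-(2+2s)}+|R(y)-p|^{-(2+2s)}$, where $R$ is the reflection. The positive contribution along the $v$-portion is still supplied by Lemma~\ref{TECH-LM1} (through the local power-like comparison function $v_p$ introduced in the proof of Proposition~\ref{BAR-gr}), while the loss from the $C^{1,1}$ cap, estimated by separating a ball of radius $\sim m L_m^{\gamma-1}$ around $p$ and using the $C^{1,1}$-bound on the second derivative, gives an integrand of order $t^{-2s}$ which integrates to the $m^{2s}$ prefactor appearing in $-C'm^{2s}/L_m^{1/2+s-\epsilon_0}$; the removed unbounded tail of $E_m$ is handled exactly as in the previous step and contributes only to the constant $C'$. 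The main obstacle is precisely this last estimate near the peak: one has to separate carefully the bump-curvature contribution from the truncated tail and exploit Lemma~\ref{TECH-LM1} to keep track of the quantitative gain given by the power-like shape of $v$, since a generic $C^{1,1}$ perturbation argument is too lossy to yield the power $L_m^{-(1/2+s-\epsilon_0)}$.
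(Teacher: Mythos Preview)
Your construction of $F_m$ and your proof of \eqref{PLFG-SYM} are essentially the same as the paper's: one compares with the set $E_m$ of Proposition~\ref{BAR-gr}, bounds the contribution of the symmetric difference $E_m\triangle F_m\subseteq\{x_1>L_m\}$ by $Cm/L_m^{1/2+s-\epsilon_0}$, and absorbs this error into the positive lower bounds \eqref{PLFG} and \eqref{PLFGbar} by taking $L_m$ large.

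For \eqref{PLFG-SYM:2}, however, your plan goes in a much harder direction than necessary and contains a wrong scale. You propose to recover a \emph{positive} contribution via Lemma~\ref{TECH-LM1} and the auxiliary power $v_p$, and to control the cap with a ball of radius $\sim m L_m^{\gamma-1}$. But \eqref{PLFG-SYM:2} only asks for a lower bound of order $-C' m^{2s}/L_m^{1/2+s-\epsilon_0}$, so no positive gain is needed at all; and the radius $m L_m^{\gamma-1}$ (which is the slope $v'(L_m)$, not a curvature scale) does not produce the $m^{2s}$ prefactor. The paper's argument is purely quantitative and much shorter: for $p_1\in[A_m,L_m+1]$ the second derivative of the profile is bounded by $C m L_m^{\gamma-2}$, so one applies Lemma~3.1 of~\cite{nostro} with $R:=m^{-1}L_m^{2-\gamma}$ and $\lambda:=L_m^{\gamma-1}$ (hence $\lambda R=L_m/m$) to obtain
\[
\left|\int_{B_{L_m/m}(p)}\frac{\chi_{F_m}-\chi_{F_m^c}}{|y-p|^{2+2s}}\,dy\right|\le C\lambda^{1-2s}R^{-2s}=Cm^{2s}L_m^{\gamma-1-2s}=\frac{Cm^{2s}}{L_m^{1/2+s-\epsilon_0}}.
\]
Outside $B_{L_m/m}(p)$ the set $F_m$ coincides with $\{x_2<0\}$, so one compares with the horizontal half-plane through $p$ and bounds the difference by $Cm L_m^{\gamma}\int_{|y_1-p_1|\ge L_m}|y_1-p_1|^{-2-2s}\,dy_1\le Cm/L_m^{1/2+s-\epsilon_0}$. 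Since $m\le m^{2s}$ for $m\in(0,1)$, both pieces are dominated by $C'm^{2s}/L_m^{1/2+s-\epsilon_0}$. There is no need to invoke Lemma~\ref{TECH-LM1}, the power $v_p$, or the symmetrized kernel in this region.
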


\begin{proof} We let~$E_m$ be the set constructed in
Proposition~\ref{BAR-gr}. Let~$E'_m$
be the even reflection
of~$E_m$ with respect to
the vertical axis~$\{x_1=L_m+1\}$.
We take a smooth function~$w:[L_m,\,L_m+2]\to [v(L_m),\,Cm \,L_m^\gamma]$
that is even with respect to~$\{x_1=L_m+1\}$, with~$w(L_m)=v(L_m)$
and such that
its derivatives agree with the ones of~$v$ at the point~$L_m$.
The set~$F_m$ is then defined as
$$ \big( E_m\cap \{x_1 \le L_m\} \big)
\;\cup\; 
\big\{ x_2<w(x_1),\; x_1\in (L_m,L_m+2)\big\}
\;\cup\; 
\big( E_m'\cap \{x_1 \ge L_m+2\} \big).$$
For completeness, let us describe the above function~$w$ explicitly.
One takes an odd function~$\tau\in C^\infty(\R,[-1,1])$ such that~$\tau=-1$
in~$(-\infty,-1]$ and~$\tau=1$ in~$[1,+\infty)$ and defines~$w$ by
$$ w(x_1):=\frac{\big(1-\tau(x_1-L_m-1)\big)\,v(x_1)+
\big(1+\tau(x_1-L_m-1)\big)\,v(2L_m+2-x_1)
}{2}.$$
Then~$w(L_m+1+x_1)=w(L_m+1-x_1)$,
hence~$w$ is even with respect to~$\{x_1=L_m+1\}$.
The set~$F_m$ has the desired geometric properties, so
it remains to prove~\eqref{PLFG-SYM}
and~\eqref{PLFG-SYM:2}. For this, we take~$L_m=10 A_m$
appropriately large. In particular, we suppose that~$L_m\ge c_m+2A_m$,
and therefore, for any~$y_1\in [L_m,+\infty)$ and~$p_1\in(0,A_m)$
we have that~$ y_1+c_m\le 2(y_1-p_1)$, and so, by~\eqref{DEF:v1},
$$ v(y_1)=\frac{m\,(y_1+c_m)^\gamma_+ }{\gamma}\le
\frac{2^\gamma m\,(y_1-p_1)^\gamma }{\gamma}.$$
We also notice that~$E_m\setminus F_m\subseteq\{x_1>L_m,\;0<x_2<v(x_1)\}$.
Therefore, for every~$p\in (\partial F_m)\cap \{x_1\in (0,A_m)\}$,
\begin{equation}\label{CVXKK}
\begin{split} &
\int_{\R^2} \frac{\chi_{E_m}(y)-\chi_{E_m^c}(y)}{|y-p|^{2+2s}}\,dy
-\int_{\R^2} \frac{\chi_{F_m}(y)-\chi_{F_m^c}(y)}{|y-p|^{2+2s}}\,dy\\
&\qquad=
2\int_{\R^2} \frac{
\chi_{E_m\setminus F_m}(y)-
\chi_{F_m\setminus E_m}(y)
}{|y-p|^{2+2s}}\,dy
\le
2\int_{ E_m\setminus F_m }\frac{dy}{ |y-p|^{2+2s} }
\\ &\qquad\le 2\int_{L_m}^{+\infty} \frac{v(y_1)\,dy_1}{ (y_1-p_1)^{2+2s} }
\le Cm \int_{L_m}^{+\infty} (y_1-p_1)^{\gamma-2-2s}
= \frac{Cm}{(L_m-p_1)^{1+2s-\gamma}}\\
&\qquad\le \frac{Cm}{L_m^{1+2s-\gamma}} =
\frac{Cm}{L_m^{\frac{1}{2}+s-\epsilon_0}},
\end{split}\end{equation}
up to changing the names of the constant~$C>0$ line after line.
Hence, recalling~\eqref{PLFG},
$$ \int_{\R^2} \frac{\chi_{F_m}(y)-\chi_{F_m^c}(y)}{|y-p|^{2+2s}}\,dy\ge
\frac{c'\,\epsilon_0\,m}{|p|^{\frac{1}{2}+s-\epsilon_0}}
-\frac{Cm}{L_m^{\frac{1}{2}+s-\epsilon_0}}
\ge \frac{c'\,\epsilon_0\,m}{2\,|p|^{\frac{1}{2}+s-\epsilon_0}}$$
for any~$p\in (\partial F_m)\cap \{x_1\in \left(\frac{d_m}{N},A_m\right)\}$,
as long as~$L_m$ is large enough (possibly in dependence of~$
\sup_{q_1\in(0,A_m)} |q|$). This establishes~\eqref{PLFG-SYM}
if~$p\in (\partial F_m)\cap \{x_1\in \left(\frac{d_m}{N},A_m\right)\}$.

If instead~$p\in (\partial F_m)\cap \{x_1\in \left(0,\frac{d_m}{N}\right]\}$,
we use~\eqref{CVXKK}
and~\eqref{PLFGbar} to obtain that
$$ \int_{\R^2} \frac{\chi_{F_m}(y)-\chi_{F_m^c}(y)}{|y-p|^{2+2s}}\,dy
\ge \frac{c'\,m}{d_m^{1-\gamma}\, |p|^{2s}} - 
\frac{Cm}{L_m^{\frac{1}{2}+s-\epsilon_0}}
\ge 
\frac{c'\,N^{2s}\,m}{d_m^{1-\gamma}\, d_m^{2s}} -
\frac{Cm}{L_m^{\frac{1}{2}+s-\epsilon_0}}
=\frac{c'\,N^{2s}\,m}{d_m^{ {\frac{1}{2}+s-\epsilon_0} }} -
\frac{Cm}{L_m^{\frac{1}{2}+s-\epsilon_0}}\ge0
,$$
as long as~$L_m$ is large enough, and this proves~\eqref{PLFG-SYM}
also in this case.

Now we prove~\eqref{PLFG-SYM:2}.
For this, we take~$p\in (\partial F_m)\cap \{x_1\in\left[A_m, L_m+1\right)\}$.
By~\eqref{DEF:v1}, the curvature of~$F_m$ at~$p$
is bounded (in absolute value) by~$C m L_m^{\gamma-2}$.
Hence (see Lemma~3.1 in~\cite{nostro}, applied 
here with~$\lambda:=L_m^{\gamma-1}$
and~$R:=m^{-1} L^{2-\gamma}$, so that~$\lambda R= \frac{L_m}{m}$,
and canceling the contribution coming from
the tangent line) one obtains that
\begin{equation}\label{POlfG-1}
\left|\int_{B_{\frac{L_m}{m}} (p)} \frac{\chi_{F_m}(y)-
\chi_{F_m^c}(y)}{|y-p|^{2+2s}}\,dy\right|\le C \big( L_m^{\gamma-1}\big)^{1-2s}
\big( m^{-1} L_m^{2-\gamma}\big)^{-2s}
= C m^{2s} L_m^{\gamma-1-2s}=\frac{C m^{2s}}{ L_m^{\frac12+s-\epsilon_0}}
,\end{equation}
for some~$C>0$, possibly varying from step to step. 

Moreover, to compute the contribution coming from
outside~$B_{\frac{L_m}{m}}(p)$, we can compare the set~$F_m$
with the horizontal line passing through~$p$.
Notice indeed that~$F_m\setminus B_{\frac{L_m}{m}}(p)=
\{x_2<0\}\setminus B_{\frac{L_m}{m}}(p)$.
Thus, since~$p_2$ is controlled by~$C m L_m^\gamma$
\begin{equation*}\begin{split}
& \left|\int_{\R^2\setminus B_{ \frac{L_m}{m} }(p)} \frac{\chi_{F_m}(y)-
\chi_{F_m^c}(y)}{|y-p|^{2+2s}}\,dy\right|
\le 2 \int_{\{0<y_2<C m L_m^\gamma\}\setminus B_{ \frac{L_m}{m} (p)}} \frac{dy}{|y-p|^{2+2s}}
\\ &\qquad\le C m L_m^\gamma \int_{ \{ |y_1-p_1|\ge L_m\} } \frac{dy_1}{|y_1-p_1|^{2+2s}}
= C m L_m^{\gamma-1-2s} =
\frac{C m}{ L_m^{\frac12+s-\epsilon_0}}
.\end{split}\end{equation*}
up to renaming~$C>0$. This and~\eqref{POlfG-1} imply~\eqref{PLFG-SYM:2},
as desired.
\end{proof}

By scaling Proposition~\ref{LK:PO},
one obtains the following result:

\begin{corollary}\label{COR 2.2-BAR}
Fix~$\epsilon_0>0$ arbitrarily small.
There exist an infinitesimal sequence of
positive~$\delta$'s and sets~$H_\delta\subseteq\R^2$,
with $(\partial H_\delta)\cap \{x_2>0\}$
of class~$C^{1,1}$,
that are even symmetric with respect to the axis~$\{x_1=0\}$ and
satisfy the following properties:
\begin{eqnarray*}
&& H_\delta \cap \{x_1 < -1\} = (-\infty,-1)\times(-\infty,0), \\
&& H_\delta \supseteq \R\times (-\infty,0),\\
&& H_\delta \supseteq (-1,1)\times (-\infty,\delta^{\frac{2+\epsilon_0}{1-2s}}]\\
{\mbox{and }}&& H_\delta \subseteq \{ x_2\le \delta \} 
.\end{eqnarray*}
Moreover,
\begin{equation}\label{PLFG-SYM:cor}
\int_{\R^2} \frac{\chi_{H_\delta}(y)-\chi_{H_\delta^c}(y)}{|y-p|^{2+2s}}\,dy
\ge 0,\end{equation}
for any~$p\in (\partial H_\delta)\cap \{x_1\in \left(-1,-1+\frac{1}{100}\right)\}$
and
\begin{equation}\label{PLFG-SYM:2:cor}
\int_{\R^2} \frac{\chi_{H_\delta}(y)-\chi_{H_\delta^c}(y)}{|y-p|^{2+2s}}\,dy
\ge -\delta,\end{equation}
for any~$p\in (\partial H_\delta)\cap \{x_1\in\left[-1+\frac{1}{100},\;0\right]\}$.
\end{corollary}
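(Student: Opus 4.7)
The plan is to obtain $H_\delta$ by rescaling and translating the compactly supported barrier $F_m$ of Proposition~\ref{LK:PO}. Setting $\lambda:=(L_m+1)^{-1}$ so that the symmetry axis $\{x_1=L_m+1\}$ of $F_m$ maps to $\{x_1=0\}$, I would define
\[
H_\delta \;:=\; \bigl\{\, (\lambda(x_1-(L_m+1)),\;\lambda x_2) \,:\, (x_1,x_2)\in F_m\,\bigr\},
\]
which inherits the $C^{1,1}$ regularity of $\partial F_m\cap\{x_2>0\}$ and the even symmetry about the new axis $\{x_1=0\}$. The infinitesimal sequence of $\delta$'s is produced by letting $m$ vary in $(0,m_{\epsilon_0}]$ while choosing $L_m=L_m(m)$ large enough that \eqref{L emme} holds.

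All the geometric inclusions transfer directly: $F_m\cap\{x_1<0\}$ becomes $H_\delta\cap\{x_1<-1\}$; the containment $F_m\supseteq\R\times(-\infty,0)$ is preserved under the rescaling; the sticking inclusion $F_m\supseteq(0,L_m+1)\times(-\infty,a_m]$ combined with even symmetry yields $H_\delta\supseteq(-1,1)\times(-\infty,\lambda a_m]$; and $F_m\subseteq\{x_2\le C_\star mL_m^{\gamma}\}$ becomes $H_\delta\subseteq\{x_2\le C_\star mL_m^{\gamma-1}\}$, with $\gamma=\tfrac12+s+\epsilon_0$. For the curvature I would use the standard scaling identity that, under a dilation by $\lambda$, the nonlocal curvature integral rescales by a factor $\lambda^{-2s}$. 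Applied to \eqref{PLFG-SYM} this immediately gives \eqref{PLFG-SYM:cor} on the image of $\{x_1\in(0,A_m)\}$, which (since $L_m=10A_m$) maps to $(-1,-9/10)\supset(-1,-1+1/100)$. Applied to \eqref{PLFG-SYM:2} one obtains, on the image of $\{x_1\in[A_m,L_m+1]\}$, a lower bound of the form $-C''m^{2s}L_m^{-(1/2-s-\epsilon_0)}$. Since the height bound of $H_\delta$ is of the same order $mL_m^{-(1/2-s-\epsilon_0)}$, I would set $\delta:=Cm^{2s}L_m^{-(1/2-s-\epsilon_0)}$ with $C$ large enough to dominate both constants; then $\delta\to 0$ along the sequence, $H_\delta\subseteq\{x_2\le\delta\}$, and \eqref{PLFG-SYM:2:cor} holds.

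The main obstacle is verifying the sticking bound $\lambda a_m\ge\delta^{(2+\epsilon_0)/(1-2s)}$, which is what dictates the specific exponent appearing in the corollary. Writing $\beta:=(2+\epsilon_0)/(1-2s)$, a direct computation gives $\beta\cdot(1/2-s-\epsilon_0)=1-\eta$ for some $\eta=\eta(\epsilon_0)>0$ tending to zero with $\epsilon_0$; hence $\delta^\beta$ scales like $m^{2s\beta}L_m^{-1+\eta}$, and the required inequality reduces to $a_m\gtrsim m^{2s\beta}L_m^{\eta}$. The positive slack $\eta>0$ coming from the choice $\epsilon_0>0$ (as opposed to the borderline $\epsilon_0=0$, which would correspond to the scaling of the $s$-harmonic function $x_+^{\gamma_0}$) is the key point: by selecting $L_m$ just large enough for \eqref{L emme} and then $m$ sufficiently small, the right-hand side is driven below the positive constant $a_m$ furnished by Proposition~\ref{LK:PO}, closing the argument.
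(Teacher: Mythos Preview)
Your overall strategy---rescaling and translating the barrier~$F_m$ of Proposition~\ref{LK:PO}---is exactly the paper's. The transfer of the geometric inclusions and of the curvature bounds~\eqref{PLFG-SYM}--\eqref{PLFG-SYM:2} via the scaling of the fractional curvature is correct. The gap is entirely in the last paragraph, where you verify the sticking bound~$\lambda a_m\ge \delta^{(2+\epsilon_0)/(1-2s)}$.

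You write that ``the right-hand side is driven below the positive constant~$a_m$ furnished by Proposition~\ref{LK:PO}'' by choosing~$L_m$ and then~$m$. This fails on two counts. First, $a_m$ is \emph{not} a constant independent of~$m$: Proposition~\ref{LK:PO} gives no uniform lower bound on~$a_m$, only the relation in~\eqref{L emme}, namely~$L_m\ge e^{1/a_m}$, i.e.\ $a_m\ge 1/\log L_m$. Second, $L_m$ and~$m$ are not independent parameters: $L_m$ is produced by the construction once~$m$ is fixed, and~\eqref{L emme} is a property of that output, not a knob you can tune. With your choice~$\delta\sim m^{2s}L_m^{-(1/2-s-\epsilon_0)}$ your required inequality becomes~$a_m\gtrsim m^{2s\beta}L_m^{\eta}$ with~$\eta>0$; since~$L_m^{\eta}$ is large and you only know~$a_m\ge 1/\log L_m$, this cannot be closed without further information tying~$L_m$ to~$m$.

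The paper's fix is twofold. It takes~$\delta:=L_m^{-(1/2-s-\epsilon_0)}$, with \emph{no} factor of~$m$ (both the height bound and the curvature bound still hold because~$m$ and~$m^{2s}$ are at most a fixed constant). Then, using~\eqref{L emme}, the rescaled stickiness satisfies
\[
\frac{a_m}{L_m}\;\ge\;\frac{1}{L_m\log L_m}\;\sim\;\frac{\delta^{\,2/(1-2s-2\epsilon_0)}}{|\log\delta|}.
\]
Finally, one \emph{renames}~$\epsilon_0$: given the target exponent~$(2+\epsilon_0)/(1-2s)$ in the corollary, one applies Proposition~\ref{LK:PO} with a smaller~$\tilde\epsilon_0$ so that~$2/(1-2s-2\tilde\epsilon_0)<(2+\epsilon_0)/(1-2s)$, and the logarithm is absorbed by the strict gap in exponents. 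This is where the ``slack'' from~$\epsilon_0>0$ is actually used, and it is the step your argument is missing.
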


\begin{proof} We scale the set $F_m$ constructed in
Proposition~\ref{LK:PO} by a factor of order~$\frac{1}{L_m}$
(then we also translate to the left
by a horizontal vector of length~$1$)
and take~$\delta:= \frac{1}{L_m^{\frac12-s-\epsilon_0}}$.
Notice that~$\delta$ is infinitesimal, due to~\eqref{L emme}.
Also, the estimates in~\eqref{PLFG-SYM:cor}
and~\eqref{PLFG-SYM:2:cor} follow from the
ones in~\eqref{PLFG-SYM}
and~\eqref{PLFG-SYM:2}, since the fractional curvature scales
by a factor proportional to~$L_m^{2s}$.

We also remark that the vertical stickiness
of~$F_m$ in Proposition~\ref{LK:PO} was bounded from below by~$a_m$,
and~$L_m \ge e^{\frac{1}{a_m}}$, by~\eqref{L emme}.
As a consequence, by scaling, the vertical stickiness of~$H_\delta$
here is bounded by
an order of~$\frac{a_m}{L_m}\ge \frac{1}{L_m\log L_m}$.
This quantity is in turn bounded by an order of~$ 
\frac{\delta^{\frac{2}{1-2s-2\epsilon_0}} }{|\log\delta|}$,
which we can bound by~$\delta^{\frac{2+\epsilon_0}{1-2s}}$,
up to renaming~$\epsilon_0$.
\end{proof}

We observe that while in~\eqref{PLFG-SYM:cor}
we obtained that the fractional mean curvature of the set is
nonnegative near~$\{x_1=\pm1\}$, from~\eqref{PLFG-SYM:2:cor}
we can only say that the fractional mean curvature of the set
near~$\{x_1=0\}$ is controlled by a small negative quantity
(and this cannot be improved, since at the points in which
the set reaches its highest level the fractional mean
curvature must be negative). By adding an additional small
contribution to the set in~$\{|x_1|\in (2,3)\}$, we can
obtain a complete subsolution, i.e. a set whose
fractional mean curvature is nonnegative.
Such subsolution has the important geometric feature
that the points along~$\{x_1=0\}$ detach from~$\{x_2=0\}$,
see Figure~\ref{FARA}.
The precise statement goes as follows:

\begin{figure}
    \centering
    \includegraphics[height=3.6cm]{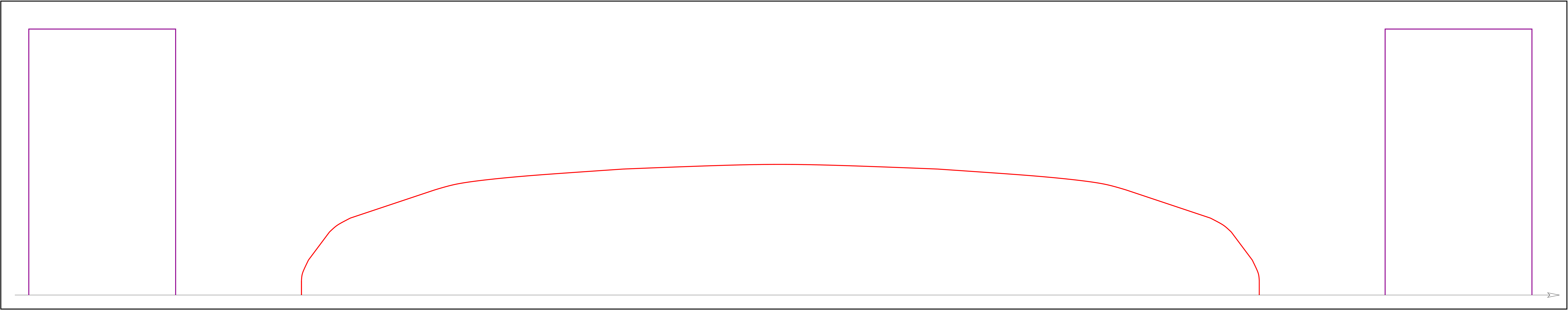}
    \caption{The barrier constructed in Proposition~\ref{LK:FARA}.}
    \label{FARA}
\end{figure}

\begin{proposition}\label{LK:FARA}
Fix~$\epsilon_0>0$ arbitrarily small.
There exist~$C>0$,
an infinitesimal sequence of
positive~$\delta$'s and sets~$E_\delta\subseteq\R^2$,
with $(\partial E_\delta)\cap \big((-\frac32,\frac32)\times(0,+\infty)\big)$
of class~$C^{1,1}$,
that are even symmetric with respect to the axis~$\{x_1=0\}$ and
satisfy the following properties:
\begin{eqnarray*}
&& E_\delta \cap \{x_1 \in(-\infty,-3)\cup(-2,-1)\} = 
\big( (-\infty,-3)\cup(-2,-1)\big)\times(-\infty,0), \\
&& E_\delta \cap \{x_1 \in[-3,-2]\} = [-3,-2]\times (-\infty, C\delta),\\
&& E_\delta \supseteq \R\times (-\infty,0),\\
&& E_\delta \supseteq (-1,1)\times (-\infty,\delta^{\frac{2+\epsilon_0}{1-2s}}]\\
{\mbox{and }}&& E_\delta \cap \{ |x_1|\le 1\}\subseteq \{ x_2\le \delta \} 
.\end{eqnarray*}
Moreover, for any~$p\in(\partial E_\delta)\cap \{|x_1|<1\}$, 
\begin{equation}\label{PLFG-SYM:FIN}
\int_{\R^2} \frac{\chi_{E_\delta}(y)-\chi_{E_\delta^c}(y)}{|y-p|^{2+2s}}\,dy
\ge 0.\end{equation}
\end{proposition}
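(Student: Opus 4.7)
The plan is to obtain $E_\delta$ from the set $H_\delta$ of Corollary~\ref{COR 2.2-BAR} by adjoining two small rectangular ``bumps'' placed strictly outside the strip $\{|x_1|<1\}$; their nonlocal contribution at any boundary point in the middle strip will be uniformly positive and of the same order as the negative error $-\delta$ appearing in~\eqref{PLFG-SYM:2:cor}, which lets us upgrade that estimate to the nonnegative one~\eqref{PLFG-SYM:FIN}.

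Concretely, I would set
$$ E_\delta := H_\delta \,\cup\, \big([-3,-2]\times(-\infty,C\delta]\big)\,\cup\,\big([2,3]\times(-\infty,C\delta]\big),$$
where $C>0$ is to be fixed. The geometric conditions listed in the proposition follow immediately from the corresponding properties of $H_\delta$ (including its even symmetry in $x_1$): the bumps live in $\{|x_1|\in[2,3]\}$, so they do not affect $E_\delta\cap\{|x_1|\le 1\}$ or the $C^{1,1}$ regularity of $\partial E_\delta$ inside $(-\tfrac32,\tfrac32)\times(0,+\infty)$, both of which are inherited from $H_\delta$.

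For~\eqref{PLFG-SYM:FIN}, take $p\in\partial E_\delta$ with $|p_1|<1$. Since $E_\delta=H_\delta$ on $\{|x_1|<1\}$, we have $p\in\partial H_\delta$, and writing
$$ \int_{\R^2}\frac{\chi_{E_\delta}(y)-\chi_{E_\delta^c}(y)}{|y-p|^{2+2s}}\,dy
= \int_{\R^2}\frac{\chi_{H_\delta}(y)-\chi_{H_\delta^c}(y)}{|y-p|^{2+2s}}\,dy
\,+\, 2\int_{E_\delta\setminus H_\delta}\frac{dy}{|y-p|^{2+2s}},$$
the first integral is bounded below by $-\delta$ thanks to~\eqref{PLFG-SYM:cor}--\eqref{PLFG-SYM:2:cor} (applied on both sides by the even symmetry of $H_\delta$). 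For the second integral, note that $E_\delta\setminus H_\delta = \big([-3,-2]\cup[2,3]\big)\times(0,C\delta]$ has area $2C\delta$, and for any $y$ there and any $p$ with $|p_1|<1$, $p_2\in[0,\delta]$, one has $|p-y|\le 5$ provided $\delta$ is small. Hence
$$ 2\int_{E_\delta\setminus H_\delta}\frac{dy}{|y-p|^{2+2s}}\,\ge\,\frac{4\,C\,\delta}{5^{2+2s}}.$$
Choosing $C:=5^{2+2s}$ gives a contribution of at least $4\delta$, which dominates the $-\delta$ error and yields~\eqref{PLFG-SYM:FIN}.

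There is no serious obstacle here: the argument is a linear combination estimate whose only subtlety is to check that the added mass is large enough compared to the error term in~\eqref{PLFG-SYM:2:cor}, and small enough that it does not spoil the geometric bound $E_\delta\cap\{|x_1|\le 1\}\subseteq\{x_2\le\delta\}$ or the regularity of the relevant portion of $\partial E_\delta$. Both requirements are met by placing the bumps at horizontal distance at least one from $\{|x_1|<1\}$ and taking their height to be a fixed multiple of $\delta$.
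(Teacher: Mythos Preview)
Your proposal is correct and follows essentially the same approach as the paper: define $E_\delta$ by adjoining the two rectangular bumps $F_\pm$ of height $C\delta$ to $H_\delta$, then use the decomposition $\chi_{E_\delta}-\chi_{E_\delta^c}=(\chi_{H_\delta}-\chi_{H_\delta^c})+2\chi_{E_\delta\setminus H_\delta}$ together with \eqref{PLFG-SYM:cor}--\eqref{PLFG-SYM:2:cor} and a lower bound on the bump contribution. The paper splits into the two cases $|p_1|\in(1-\tfrac{1}{100},1)$ and $|p_1|\le 1-\tfrac{1}{100}$, using only monotonicity in the first case, whereas you treat both at once via the uniform bound $\ge -\delta$; this is a harmless cosmetic simplification.
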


\begin{proof} Let~$H_\delta$ be as in
Corollary~\ref{COR 2.2-BAR}.
We define~$E_\delta:= H_\delta\cup F_-\cup F_+$, where
$F_-:= (-3,-2)\times [0,C\delta)$ and
$F_+:= (2,3)\times [0,C\delta)$.
Then~$E_\delta$ satisfies all the desired geometric properties,
and~$E_\delta\supset H_\delta$. Therefore,
when~$p\in (\partial E_\delta)
\cap \{|x_1|\in \left(1-\frac{1}{100},\,1\right)\}$,
we have that~\eqref{PLFG-SYM:FIN}
follows from~\eqref{PLFG-SYM:cor}.
Moreover, 
when~$p\in (\partial E_\delta)
\cap \{|x_1|\le1-\frac{1}{100}\}$,
we have that~\eqref{PLFG-SYM:FIN} follows from~\eqref{PLFG-SYM:2:cor}
and the fact that~$|F_+|=|F_-|=C\delta$
(and one can choose $C>0$ conveniently large).
\end{proof}

\begin{remark}{\rm Concerning the statement of Proposition~\ref{LK:FARA},
by~\eqref{PLFG-SYM:FIN} (see in addition
Lemma~3.3 in~\cite{nostro}),
we also obtain that
\begin{equation}\label{PLFG-SYM:FIN:c}
\int_{\R^2} \frac{\chi_{E_\delta}(y)-\chi_{E_\delta^c}(y)}{|y-p|^{2+2s}}\,dy
\ge 0\end{equation}
for any~$p\in\overline{(\partial E_\delta)\cap \{|x_1|<1\}}$.
}\end{remark}

\section{Instability of the flat fractional minimal surfaces}\label{INST:SEC}

With the barrier constructed in Proposition~\ref{LK:FARA}
we are now in the position of proving Theorem~\ref{UNS}.
For this, we will take~$E$ and~$F$ as in the statement of
Theorem~\ref{UNS}.

\begin{proof}[Proof of Theorem~\ref{UNS}] Let~$E_\delta$ be as in
Proposition~\ref{LK:FARA}. The idea is to slide~$E_\delta$ (or, more precisely,
$E_{\frac{\delta}{C}}$)
from below. Namely, for any~$t\ge0$
we consider the set~$E(t):=E_{\frac{\delta}{C}} - te_2$.
For large~$t$, we have that~$E(t)\subseteq E$.
So we take the smallest~$t\ge0$ for which
such inclusion holds. We observe that Theorem~\ref{UNS}
would be proved if we show that such~$t$ equals to~$0$.

Then suppose, by contradiction, that
\begin{equation}\label{KJrf78}
t>0.\end{equation}
By construction, 
\begin{equation}\label{JG5rt}
E(t)\subseteq E\end{equation}
and
there exists a contact point between the two sets.
{F}rom the data outside~$[-1,1]\times\R$,
we have that all the contact points must lie
in~$[-1,1]\times\R$. 

Furthermore, 
\begin{equation}\label{no co}
{\mbox{no contact point can occur in~$(-1,1)\times\R$.}}\end{equation}
To check this, suppose that there exists~$p=(p_1,p_2)\in (\partial E(t))\cap
(\partial E)$ with~$|p_1|<1$. Then, using the
Euler-Lagrange equation in the viscosity sense for~$E$
(see Theorem~5.1 in~\cite{CRS})
and~\eqref{PLFG-SYM:FIN} we have that
$$ \int_{\R^2} \frac{\chi_{E}(y)-\chi_{E^c}(y)}{|y-p|^{2+2s}}\,dy
\le0\le
\int_{\R^2} \frac{\chi_{E(t)}(y)-\chi_{E^c(t)}(y)}{|y-p|^{2+2s}}\,dy.$$
Also, the opposite inequality holds, thanks to~\eqref{JG5rt},
and therefore~$E(t)$ and~$E$ must coincide. This would give that~$t=0$,
against our assumption. This proves~\eqref{no co}.

As a consequence, we have that all the contact points lie
on~$\{\pm1\}\times\R$. Since both~$\partial E(t)$
and~$\partial E$ are closed set, we can take the contact point
with lower vertical coordinate along~$\{x_1=\pm1\}$, and we denote it
by~$x_{o}^\pm =(\pm1,x_{o,2}^\pm)$.

Now, for any~$k\in\N$ (to be taken as large as we wish)
and any~$h\in [0,\,1/k]$
we consider the ball of small radius~$r>0$ (smaller than
the radius of curvature of~$E(t)$)
centered on the line~$\{x_2=x_{o,2}^\pm+h\}$
and we slide such ball to the left (towards~$\{x_1=-1\}$)
or to the right (towards~$\{x_1=1\}$) till it touches
either~$\partial E\cap \{ |x_1|<1\}$ or~$\{x_1=\pm 1\}$,
see Figure~\ref{ARCHI}.

\begin{figure}
    \centering
    \includegraphics[height=5.9cm]{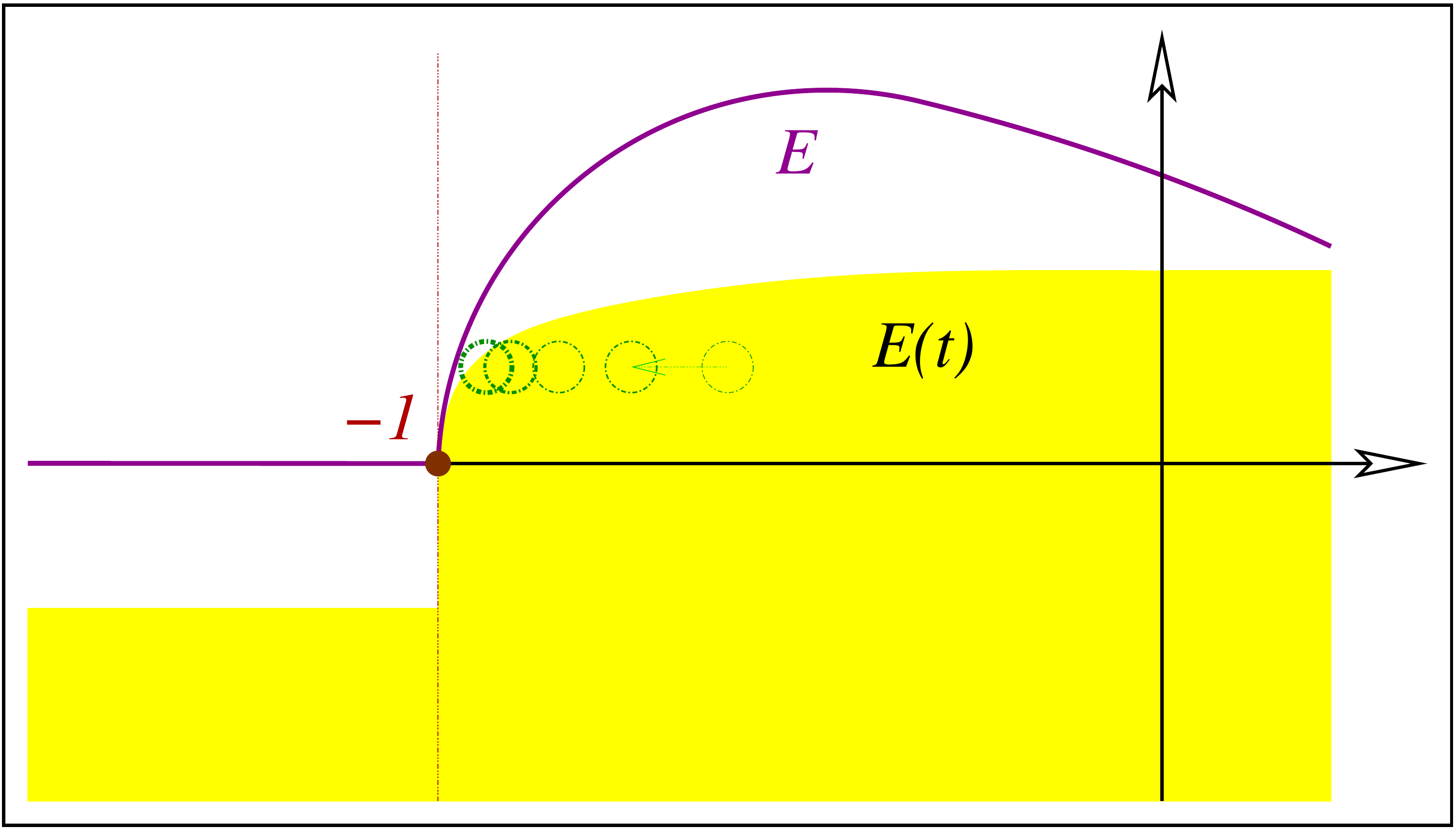}
    \caption{Sliding the balls from the barriers towards~$\partial E\cap \{ |x_1|<1\}$.}
    \label{ARCHI}
\end{figure}

We claim that there exists a sequence~$k\to+\infty$ for which
there exists~$h_k\in 
[0,\,1/k]$ such that the sliding of this ball (either to the right or to
the left)
touches~$\partial E\cap \{ |x_1|<1\}$. Indeed, if not,
we have that~$\partial E$, near~$\{x_1=\pm1\}$, stays 
above~$\{x_2=x_{o,2}^\pm+\alpha\}$,
for some~$\alpha>0$. But this would imply that we can keep
sliding~$E(t)$ a little more upwards, in contradiction
with the minimality of~$t$.

Therefore, we can assume that, for a suitable sequence~$k\to+\infty$,
we have that there exist points~$x_k=(x_{k,1},x_{k,2})
\in (\partial E)\cap\{|x_1|<1\}$
with~$x_{k,2}=x_{o,2}^\pm + h_k$ and~$h_k\in[0,\,1/k]$. By construction,
the points~$x_k$ must lie outside~$E(t)$, hence,
if~$r$ is small enough,
we have that~$|x_{k,1}|\to1$
as~$k\to+\infty$.

Hence, we assume that~$x_k\in (\partial E)\cap\{|x_1|<1\}$
and~$x_k\to x_o:=x_o^-$ as~$k\to+\infty$ (the case in which~$x_k\to x_o^+$
is completely analogous).
Then, by the Euler-Lagrange equation at the points~$x_k$ (see
Lemma~3.4 in~\cite{nostro}),  
we obtain that
\begin{equation}\label{PLFG-SYM:FIN:c:NINI}
\int_{\R^n} \frac{\chi_{E}(y)- \chi_{E^c}(y)}{|
x_o-y|^{n+2s}}\,dy\le0.\end{equation}
On the other hand, by~\eqref{PLFG-SYM:FIN:c}, 
\begin{equation}\label{PLFG-SYM:FIN:c:NI}
\int_{\R^2} \frac{\chi_{E(t)}(y)-\chi_{E^c(t)}(y)}{|x_o-y|^{2+2s}}\,dy
\ge 0.\end{equation}
Combining~\eqref{JG5rt}, \eqref{PLFG-SYM:FIN:c:NINI}
and~\eqref{PLFG-SYM:FIN:c:NI}, it follows that~$E(t)=E$.
Thus, from the values of~$E_\delta$ and~$E$ outside~$\{|x_1|\le 1\}$,
we conclude that~$t=0$.
This is in contradiction with~\eqref{KJrf78}
and so the desired result is proved.
\end{proof}

\begin{appendix}

\section{Symmetry properties and a variation on the proof
of Lemma \ref{TOR}}

Here we prove that the minimizers inherit the symmetry
properties of the boundary data:

\begin{lemma}\label{SYMM-LEMMA}
Let~$T:\R^n\to\R^n$ be an isometry, 
with~$T(\Omega)=\Omega$.
Assume that there exists~$N\in\N$ such that~$T^N(x)=x$ for every~$x\in
\Omega$.

Let~$E\subseteq\R^n$ be such that~$T(E)=E$.
Let~$E_*$ be the $s$-minimal set in a domain~$\Omega$
among all the sets~$F$ such that~$F\setminus\Omega=E\setminus\Omega$.
Then~$T(E_*)=E_*$.
\end{lemma}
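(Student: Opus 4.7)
The plan is to show that $T(E_*)$ is itself an $s$-minimizer in $\Omega$ with the same exterior datum, and then to deduce $T(E_*)=E_*$ (modulo null sets) from the submodularity of the fractional perimeter. First I would check that $T(E_*)$ is a valid competitor: since $T(\Omega)=\Omega$ and $T(E)=E$,
$$T(E_*)\setminus\Omega \;=\; T(E_*\setminus\Omega) \;=\; T(E\setminus\Omega) \;=\; E\setminus\Omega,$$
so $T(E_*)$ carries the prescribed exterior datum. The $s$-perimeter is invariant under isometries (change variables $x\mapsto T^{-1}(x)$, $y\mapsto T^{-1}(y)$ in $L$: the kernel depends only on $|x-y|$), and combined with $T(\Omega)=\Omega$ this yields $\per(T(E_*),\Omega)=\per(E_*,\Omega)$. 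Hence $T(E_*)$ is also an $s$-minimizer.

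The key tool is the submodularity of the fractional perimeter: for any measurable $A,B\subset\R^n$ coinciding with $E$ outside $\Omega$,
$$\per(A\cap B,\Omega)+\per(A\cup B,\Omega) \;\leq\; \per(A,\Omega)+\per(B,\Omega),$$
a consequence of the pointwise inequality
$$|\chi_{A\cap B}(x)-\chi_{A\cap B}(y)|+|\chi_{A\cup B}(x)-\chi_{A\cup B}(y)| \;\leq\; |\chi_A(x)-\chi_A(y)|+|\chi_B(x)-\chi_B(y)|,$$
verified by a case check in the values of the four indicators; crucially, the inequality is \emph{strict} precisely when one of $x,y$ lies in $A\setminus B$ and the other in $B\setminus A$ (in which case the left side is $0$ while the right side is $2$). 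Applying this to $A=E_*$ and $B=T(E_*)$, both $A\cap B$ and $A\cup B$ share the exterior datum, so minimality of $E_*$ forces
$$\per(E_*\cap T(E_*),\Omega)=\per(E_*\cup T(E_*),\Omega)=\per(E_*,\Omega),$$
and the submodularity is in fact an equality.

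To conclude, assume by contradiction that $|E_*\triangle T(E_*)|>0$. Since the symmetric difference is contained in $\Omega$, and $T$ is measure-preserving with $T(\Omega)=\Omega$, one has $|E_*\setminus T(E_*)|=|T(E_*)\setminus E_*|>0$ (alternatively, using the cyclic hypothesis $T^N=\mathrm{id}$, one uses the chain of equalities $|E_*\setminus T(E_*)|=|T(E_*)\setminus T^2(E_*)|=\cdots=|T^{N-1}(E_*)\setminus E_*|$ to transfer measure). Then $\bigl(E_*\setminus T(E_*)\bigr)\times\bigl(T(E_*)\setminus E_*\bigr)\subset\Omega\times\Omega$ has positive Lebesgue measure, the pointwise submodularity is strict there, and the kernel $|x-y|^{-n-2s}$ is strictly positive, so integrating promotes the submodularity to a strict inequality, contradicting the equality just established. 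Hence $E_*=T(E_*)$, as desired.

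The main subtle point is the strict version of submodularity, combined with the measure identity $|E_*\setminus T(E_*)|=|T(E_*)\setminus E_*|$; both rest on the isometry and invariance hypotheses on $T$, with the cyclic condition $T^N=\mathrm{id}$ providing a convenient (though not strictly necessary) route to the latter equality when $\Omega$ is unbounded, as is the case for several of the applications of this lemma in the paper.
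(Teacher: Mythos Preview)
Your proof is correct and follows essentially the same strategy as the paper: show that $T(E_*)$ is an admissible competitor with the same $s$-perimeter, invoke submodularity and its equality case, and then use the cyclic condition $T^N=\mathrm{id}$ to close the argument. One small slip: your chain of equalities actually yields $|E_*\setminus T(E_*)|=|T^{N-1}(E_*)\setminus E_*|$ rather than $|T(E_*)\setminus E_*|$, but what you really need---that if one set difference has measure zero then iterating $T$ and applying $T^N=\mathrm{id}$ forces $E_*=T(E_*)$---follows immediately, and this is precisely how the paper concludes as well.
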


\begin{proof} We let
$$ {\mathcal{F}}(u):=\frac12\iint_{\R^{2n}\setminus (\Omega^c)^2}
\frac{|u(x)-u(y)|^2}{|x-y|^{n+2s}}\,dx\,dy.$$
We observe that
$$ {\mathcal{F}}(\chi_E)=\per(E,\Omega).$$
Moreover, by Lemma~3 on page~685
in~\cite{PSV-AnnMat}, we have that
$$ {\mathcal{F}}(\min\{u, v\} ) + {\mathcal{F}}(\max\{u,v\}) \le
{\mathcal{F}} (u ) + {\mathcal{F}} (v),$$
and the equality holds if and only if
either~$u(x) \le v(x)$ or $v(x) \le u(x)$ for any~$ x\in\Omega$.

We use the observations above with~$u:=\chi_{E_*}$ and~$v:=\chi_{T(E_*)}$.
Notice that, in this case, $\min\{u, v\}=\chi_{E_*\cap T(E_*)}$
and~$\max\{u, v\}=\chi_{E_*\cup T(E_*)}$. Hence, we obtain
\begin{equation}\label{89-987}
\per\big( E_*\cap T(E_*),\Omega\big)+
\per\big( E_*\cup T(E_*),\Omega\big)\le
\per( E_*,\Omega)+\per( T(E_*),\Omega),
\end{equation}
and the equality holds if and only if
either~$\chi_{E_*}(x) \le \chi_{T(E_*)}(x)$ or $\chi_{T(E_*)}(x) \le \chi_{E_*}(x)$
for any~$ x\in\Omega$, that is,
if and only if 
\begin{equation}\label{89j67-0}
{\mbox{either~$E_*\cap\Omega\subseteq T(E_*)\cap\Omega$
or~$T(E_*)\cap\Omega\subseteq E_*\cap\Omega$.}}\end{equation}
Now we observe that
\begin{eqnarray*}
\per\big(T(E_*),\Omega\big) &=& L\big(
T(E_*)\cap\Omega, \R^n\setminus T(E_*)\big)+
L\big(\Omega\setminus T(E_*),
T(E_*)\setminus\Omega\big) \\
&=& L\big(T(E_*)\cap T(\Omega), \R^n\setminus T(E_*)\big)+
L\big(T(\Omega)\setminus T(E_*),
T(E_*)\setminus T(\Omega)\big)\\
&=&
L\big(T(E_*\cap\Omega), T(\R^n\setminus E_*)\big)+
L\big(T(\Omega\setminus E_*),
T(E_*\setminus \Omega)\big) \\
&=&
L(E_*\cap\Omega,\R^n\setminus E_*)+
L(\Omega\setminus E_*, E_*\setminus \Omega)
\\ &=& \per(E_*,\Omega).
\end{eqnarray*}
Substituting this in~\eqref{89-987}, we obtain that
\begin{equation}\label{89-987-2}
\per\big( E_*\cap T(E_*),\Omega\big)+
\per\big( E_*\cup T(E_*),\Omega\big)\le
2\per( E_*,\Omega).\end{equation}
On the other hand, 
\begin{equation}\label{0989uo}
T(E_*)\setminus\Omega=
T(E_*)\setminus T(\Omega)= T(E_*\setminus\Omega)
=T(E\setminus\Omega)=T(E)\setminus\Omega=E\setminus \Omega.\end{equation}
This says that~$E_*\cap T(E_*)$ and~$E_*\cup T(E_*)$
are admissible competitors for~$E_*$ and therefore
$$ \per( E_*,\Omega)\le\per\big( E_*\cap T(E_*),\Omega\big)
\;{\mbox{ and }}\;
\per( E_*,\Omega)\le\per\big( E_*\cup T(E_*),\Omega\big).$$
This implies that the equality holds in~\eqref{89-987-2},
and so in~\eqref{89-987}.

Therefore, \eqref{89j67-0} holds true.
So we suppose that~$E_*\cap\Omega\subseteq T(E_*)\cap\Omega$
(the case in which~$T(E_*)\cap\Omega\subseteq E_*\cap\Omega$
can be dealt with in a similar way).
Then we have that~$E_*\cap\Omega\subseteq T(E_*\cap\Omega)$.
By applying~$T$, we obtain~$T(E_*\cap\Omega)\subseteq T^2(E_*\cap\Omega)$,
and so, iterating the procedure
$$ E_*\cap\Omega\subseteq T(E_*\cap\Omega)\subseteq\dots
\subseteq T^{N-1}(E_*\cap\Omega)
\subseteq T^N(E_*\cap\Omega)=E_*\cap\Omega.$$
This shows that~$E_*\cap\Omega=T(E_*\cap\Omega)$, that is~$E_*\cap\Omega=
T(E_*)\cap\Omega$.

Also, by~\eqref{0989uo}, $E_*\setminus\Omega=
T(E_*)\setminus\Omega$. Therefore~$E_*=T(E_*)$, as desired.
\end{proof}

Now we give a different (and more general)
proof of Lemma~\ref{TOR}, according to the following result:

\begin{lemma}\label{lemma:trasformata}
Let $\sigma$, $\sigma_0\in (0,1)$, with $\sigma<2\sigma_0$. Then, for any $t>0$, we have 
\begin{equation}\label{prima112}
(-\Delta)^{\sigma_0} t_+^{\sigma} =-4\,\Gamma(1+\sigma)\,\Gamma(2\sigma_0-\sigma)\,
\sin\big(\pi(\sigma-\sigma_0)\big)t^{\sigma-2\sigma_0},\end{equation}
where $\Gamma$ is the gamma function. 

In particular, 
\begin{itemize}
\item if $\sigma=\sigma_0$, then, for any $t>0$, 
$$ (-\Delta)^{\sigma} t_+^{\sigma}=0,$$ 
\item if $\sigma>\sigma_0$, then for any $t>0$, 
$$ (-\Delta)^{\sigma_0} t_+^{\sigma}<0, $$
\item if $\sigma<\sigma_0$, then for any $t>0$, 
$$ (-\Delta)^{\sigma_0} t_+^{\sigma}>0. $$
\end{itemize}
\end{lemma}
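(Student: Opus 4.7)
The proof will proceed in two conceptual steps: (i) a scaling reduction of the identity to a single numerical constant, and (ii) an explicit evaluation of that constant via $\Gamma$-function identities.

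For (i), the singular integral
$$(-\Delta)^{\sigma_0} t_+^\sigma = \mathrm{P.V.}\int_{\R} \frac{t^\sigma - s_+^\sigma}{|t-s|^{1+2\sigma_0}}\,ds$$
is scale-covariant: substituting $s=tu$ for $t>0$ yields $(-\Delta)^{\sigma_0} t_+^\sigma = C(\sigma,\sigma_0)\,t^{\sigma-2\sigma_0}$ with $C(\sigma,\sigma_0) := (-\Delta)^{\sigma_0} t_+^\sigma|_{t=1}$, so only the value of $C$ is at stake.

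For (ii), I would split the integral at $t=1$ into the three pieces $s<0$, $0<s<1$, $s>1$. The first is elementary and contributes $1/(2\sigma_0)$. On $(1,\infty)$, the inversion $s=1/u$ converts the integrand into one on $(0,1)$ involving the new exponents $2\sigma_0-1$ and $2\sigma_0-1-\sigma$; combining with the $(0,1)$ piece produces
$$\int_0^1 \frac{1-s^\sigma+s^{2\sigma_0-1}-s^{2\sigma_0-1-\sigma}}{(1-s)^{1+2\sigma_0}}\,ds.$$
A direct Taylor expansion at $s=1$ shows the numerator vanishes to order two (both the constant and linear terms cancel because of the precise algebraic relation between the four exponents), so this is an absolutely convergent integral, with integrability at $s=0$ ensured by the hypothesis $\sigma<2\sigma_0$. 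Decomposing into the four natural Beta-type pieces --- moving $\sigma$ or $\sigma_0$ beforehand into a region of the complex plane where each sub-integral converges on its own, and continuing analytically at the end, since both sides of the target formula are meromorphic in $\sigma$ --- and applying $B(a,b)=\Gamma(a)\Gamma(b)/\Gamma(a+b)$ writes $C$ as a rational combination of $\Gamma$-values. The Euler reflection formula $\Gamma(z)\Gamma(1-z)=\pi/\sin(\pi z)$ then collapses this combination into $-4\,\Gamma(1+\sigma)\,\Gamma(2\sigma_0-\sigma)\,\sin(\pi(\sigma-\sigma_0))$. The three sign corollaries follow at once: under the standing hypotheses $\sigma,\sigma_0\in(0,1)$ and $\sigma<2\sigma_0$, one has $\Gamma(1+\sigma)>0$ and $2\sigma_0-\sigma\in(0,2)$ so $\Gamma(2\sigma_0-\sigma)>0$, while $\sigma-\sigma_0\in(-\sigma_0,\sigma_0)\subset(-1,1)$ makes $\sin(\pi(\sigma-\sigma_0))$ zero, positive, or negative according as $\sigma$ equals, exceeds, or is less than $\sigma_0$; the $-4$ prefactor then yields the stated trichotomy.

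The principal obstacle is bookkeeping around the principal-value structure: although the combined numerator above is $O((1-s)^2)$ at $s=1$, the four natural Beta sub-integrals one wants to extract each diverge individually there, so the Beta evaluation must either be performed on the combined integrand (by first writing it as $(1-s)^2$ times a regular factor and recognising the result as a hypergeometric-type integral) or, more robustly, carried out by analytic continuation from a half-plane of parameters in which every sub-integral converges absolutely. As a cross-check I would run the argument instead on the Fourier side: the Gel'fand--Shilov formula $\widehat{x_+^\sigma}(\xi) = \Gamma(1+\sigma)(i\xi+0)^{-\sigma-1}$, multiplied by the symbol $|\xi|^{2\sigma_0}$ of $(-\Delta)^{\sigma_0}$ and inverted, produces the expected linear combination of $x_+^{\sigma-2\sigma_0}$ and $x_-^{\sigma-2\sigma_0}$, whose $x_+$-coefficient carries the factor $\sin(\pi(\sigma-\sigma_0))$ directly from the phase mismatch between the relevant complex powers.
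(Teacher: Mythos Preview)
Your proposal is correct, and your main route is genuinely different from the paper's. The paper works entirely on the Fourier side: it writes $2t_+^\sigma=|t|^\sigma+\frac{1}{\sigma+1}\partial_t|t|^{\sigma+1}$, applies the distributional identity ${\mathcal F}(|t|^q)=C_q|\xi|^{-1-q}$ with $C_q=-2\Gamma(1+q)\sin(\pi q/2)$, multiplies by the symbol $|\xi|^{2\sigma_0}$, inverts, and then reduces a four-term product of $C_q$'s by trigonometric identities until the factor $\sin(\pi(\sigma-\sigma_0))$ emerges. Your primary argument, by contrast, stays on the real side: after the scaling reduction and the inversion $s\mapsto 1/s$ on $(1,\infty)$, you land on a single convergent integral over $(0,1)$ whose numerator vanishes to second order at $s=1$ (a nice structural observation), and you evaluate it by Beta-function decomposition plus analytic continuation and Euler reflection. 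This buys you a proof that is more self-contained and avoids tempered distributions, at the cost of the analytic-continuation bookkeeping you flag. Your Fourier-side cross-check via the Gel'fand--Shilov formula $\widehat{x_+^\sigma}(\xi)=\Gamma(1+\sigma)(i\xi+0)^{-\sigma-1}$ is essentially a compressed version of the paper's own argument, using the one-sided transform directly instead of splitting into even and odd parts. One small point worth keeping in mind: both the paper and your sketch treat the overall normalization of $(-\Delta)^{\sigma_0}$ loosely (the paper says ``up to a dimensional constant''), so the precise prefactor $-4$ depends on that convention; the sign trichotomy, which is what is actually used elsewhere, is of course unaffected.
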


\begin{proof} The proof is a modification of an argument
given in~\cite{claudia}.
In order to prove Lemma \ref{lemma:trasformata}, 
we will use the Fourier transform of~$|t|^q$
in the sense of distribution, where~$q\in\C\setminus\Z$.
Namely (see e.g. 
Lemma~2.23 on page~38 of~\cite{KOLDO})
\begin{equation}\label{KSp}
{\mathcal{F}} (|t|^q) = C_q\,|\xi|^{-1-q},
\end{equation}
with
\begin{equation}\label{J5K:1} C_q:=-2\Gamma(1+q)\,\sin\frac{\pi q}{2}.
\end{equation}
Notice that the map~$\R\ni t\mapsto|t|^q$
is even, and so we can rewrite~\eqref{KSp} as
\begin{equation}\label{KSp2}
{\mathcal{F}}^{-1} (|\xi|^q) =(2\pi)^{-1} C_q\,|t|^{-1-q}.
\end{equation}
Moreover,
$$ |t|^\sigma +\frac{1}{\sigma+1} \partial_t |t|^{\sigma +1} = 2t_+^\sigma.$$
Therefore, taking the Fourier transform and using~\eqref{KSp} with~$q:=\sigma$
and~$q:=\sigma+1$, we obtain that
\begin{eqnarray*}
2{\mathcal{F}}(t_+^\sigma) &=&
{\mathcal{F}}(|t|^\sigma) +\frac{1}{\sigma +1} {\mathcal{F}}\big(\partial_t |t|^{\sigma +1}\big)\\
&=&
{\mathcal{F}}(|t|^\sigma) +\frac{2 i\xi}{\sigma +1} {\mathcal{F}}(|t|^{\sigma +1})
\\ &=& C_\sigma \,|\xi|^{-1-\sigma}+\frac{2 i\xi}{\sigma +1} C_{\sigma +1}\,|\xi|^{-2-\sigma}.
\end{eqnarray*}
So, multiplying the equality above by $|\xi|^{2\sigma_0}$, we obtain that
$$ 2 |\xi|^{2\sigma_0} {\mathcal{F}}(t_+^\sigma)
= C_\sigma\,|\xi|^{2\sigma_0 -\sigma -1}+\frac{2 i\xi}{\sigma +1} 
C_{\sigma +1}\,|\xi|^{2\sigma_0-\sigma-2},$$
and so 
\begin{equation}\label{senzapi}
2 {\mathcal{F}}^{-1}\Big( |\xi|^{2\sigma_0} {\mathcal{F}}(t_+^\sigma)\Big)
=
C_\sigma \,{\mathcal{F}}^{-1}(|\xi|^{2\sigma_0-\sigma-1})
+\frac{2 C_{\sigma+1}\,i}{\sigma+1} 
{\mathcal{F}}^{-1}(\xi) * {\mathcal{F}}^{-1}(|\xi|^{2\sigma_0-\sigma-2})
\end{equation}
Now we claim that, for any test function~$g$, 
\begin{equation}\label{spero1534}
\left({\mathcal{F}}^{-1}(\xi) * g\right)(t) = -i \partial_t g(t).
\end{equation}
Indeed, 
\begin{eqnarray*}
&& \left({\mathcal{F}}^{-1}(\xi) * g\right)(t) = 
{\mathcal{F}}^{-1}\left(\xi {\mathcal{F}}g(\xi)\right)(t)\\
&&\qquad =
\frac{1}{2\pi}\int_{\R}dy\int_{\R}d\xi\,e^{iy\cdot(t-\xi)}\,y\,g(\xi)
= -
\frac{1}{2\pi i}\int_{\R}dy\int_{\R}d\xi\,\partial_\xi e^{iy\cdot(t-\xi)} \,g(\xi)\\&&\qquad =
\frac{1}{2\pi i}\int_{\R}dy\int_{\R}d\xi\, e^{iy\cdot(t-\xi)}\,\partial_\xi g(\xi)
=
\frac{1}{2\pi i}\int_{\R}dy\, e^{iy\cdot t}\,{\mathcal{F}}\big(\partial_\xi g\big)(y)\\ &&\qquad 
= \frac{1}{i}{\mathcal{F}}^{-1}\left({\mathcal{F}}(\partial_\xi g)\right)(t)
= -i\, \partial_\xi g(t),
\end{eqnarray*}
which shows~\eqref{spero1534}. 

Using~\eqref{spero1534} into~\eqref{senzapi}, we obtain that
\begin{eqnarray*}
2 {\mathcal{F}}^{-1}\Big( |\xi|^{2\sigma_0} {\mathcal{F}}(t_+^\sigma)\Big)
&=&C_\sigma\,{\mathcal{F}}^{-1}(|\xi|^{2\sigma_0-\sigma-1})
-\frac{C_{\sigma+1}\,i}{\sigma+1}\cdot i\, \partial_t
{\mathcal{F}}^{-1}(|\xi|^{2\sigma_0-\sigma-2})
\\ &=&C_\sigma\,{\mathcal{F}}^{-1}(|\xi|^{2\sigma_0-\sigma-1})
+\frac{C_{\sigma+1}}{\sigma+1}\partial_t {\mathcal{F}}^{-1}(|\xi|^{2\sigma_0-\sigma-2}).
\end{eqnarray*}
As a consequence, exploiting~\eqref{KSp2} with~$q:=2\sigma_0-\sigma-1$ and~$q:=2\sigma_0-\sigma-2$,
we have that
\begin{eqnarray*}
2 {\mathcal{F}}^{-1}\Big( |\xi|^{2\sigma_0} {\mathcal{F}}(t_+^\sigma)\Big)
&=& C_\sigma \,C_{2\sigma_0-\sigma-1}\,|t|^{\sigma-2\sigma_0}
+\frac{C_{\sigma+1}\,C_{2\sigma_0-\sigma-2}}{\sigma+1}
\partial_t |t|^{\sigma-2\sigma_0+1}\\
&=&C_\sigma\,C_{2\sigma_0-\sigma-1}\,|t|^{\sigma-2\sigma_0}
+\frac{\sigma-2\sigma_0+1}{\sigma+1}\cdot C_{\sigma+1}\,C_{2\sigma_0-\sigma-2} 
\,t\,|t|^{\sigma-2\sigma_0-1}.
\end{eqnarray*}
This gives that, for $t>0$, 
$$ 2 {\mathcal{F}}^{-1}\Big( |\xi|^{2\sigma_0} {\mathcal{F}}(t_+^\sigma)\Big)
= \left( C_\sigma\,C_{2\sigma_0-\sigma-1} + 
\frac{\sigma-2\sigma_0+1}{\sigma+1}\cdot C_{\sigma+1}\,C_{2\sigma_0-\sigma-2}\right) 
t^{\sigma-2\sigma_0}.$$
So we obtain that, up to a dimensional constant, for any $t>0$, 
\begin{equation}\label{plug}
(-\Delta)^{\sigma_0} (t_+^\sigma) = \left( C_\sigma\,C_{2\sigma_0-\sigma-1} + 
\frac{\sigma-2\sigma_0+1}{\sigma+1}\cdot C_{\sigma+1}\,C_{2\sigma_0-\sigma-2}\right) 
t^{\sigma-2\sigma_0}.\end{equation}

Now, we observe that
\begin{equation}\label{C sigma}
C_\sigma\,C_{2\sigma_0-\sigma-1}= 4 \,\Gamma(1+\sigma)\,\Gamma(2\sigma_0-\sigma)\,
\sin\left(\frac{\pi}{2}\sigma\right)\,\sin\left(\frac{\pi}{2}(2\sigma_0-\sigma-1)\right).
\end{equation}
Moreover, 
$$ \Gamma(2+\sigma)=(1+\sigma) \Gamma(1+\sigma)\;{\mbox{ and }}\;
\Gamma(2\sigma_0-\sigma)=(2\sigma_0-\sigma-1) \Gamma(2\sigma_0-\sigma-1).$$
As a consequence, recalling \eqref{J5K:1} and \eqref{C sigma}, 
\begin{eqnarray*}
&&\frac{\sigma-2\sigma_0+1}{\sigma+1}\cdot C_{\sigma+1}\,C_{2\sigma_0-\sigma-2}
\\& =& \frac{\sigma-2\sigma_0+1}{\sigma+1}\, 4\, \Gamma(2+\sigma)\, \Gamma(2\sigma_0-\sigma-1)
\,\sin\left( \frac{\pi}{2}(\sigma+1)\right)\,\sin\left(\frac{\pi}{2}(2\sigma_0-\sigma-2)\right) \\
&=& -4\, \Gamma(1+\sigma)\, \Gamma(2\sigma_0-\sigma)\,
\sin\left( \frac{\pi}{2}(\sigma+1)\right)\,\sin\left(\frac{\pi}{2}(2\sigma_0-\sigma-2)\right)\\
&=& - C_\sigma\,C_{2\sigma_0-\sigma-1}\cdot 
\frac{\sin\left( \frac{\pi}{2}(\sigma+1)\right)}{\sin\left(\frac{\pi}{2}\sigma\right)}\cdot 
\frac{\sin\left(\frac{\pi}{2}(2\sigma_0-\sigma-2)\right)}{
\sin\left(\frac{\pi}{2}(2\sigma_0-\sigma-1)\right)}.\end{eqnarray*}
Plugging this into \eqref{plug}, we get
\begin{equation*}
(-\Delta)^{\sigma_0} (t_+^\sigma) = C_\sigma\,C_{2\sigma_0-\sigma-1}\left(1 - 
\frac{\sin\left( \frac{\pi}{2}(\sigma+1)\right)}{\sin\left(\frac{\pi}{2}\sigma\right)}\cdot 
\frac{\sin\left(\frac{\pi}{2}(2\sigma_0-\sigma-2)\right)}{
\sin\left(\frac{\pi}{2}(2\sigma_0-\sigma-1)\right)}
\right) 
t^{\sigma-2\sigma_0}.\end{equation*}
Now, by elementary trigonometry, we see that
$$ \sin\left( \frac{\pi}{2}(\sigma+1)\right) = \cos\left(\frac{\pi}{2}\sigma\right)
\;{\mbox{ and }}\;
\sin\left(\frac{\pi}{2}(2\sigma_0-\sigma-2)\right) = - 
\cos\left(\frac{\pi}{2}(2\sigma_0-\sigma-1)\right). 
$$
Therefore, 
\begin{eqnarray*}
&& 1 - 
\frac{\sin\left( \frac{\pi}{2}(\sigma+1)\right)}{\sin\left(\frac{\pi}{2}\sigma\right)}\cdot 
\frac{\sin\left(\frac{\pi}{2}(2\sigma_0-\sigma-2)\right)}{
\sin\left(\frac{\pi}{2}(2\sigma_0-\sigma-1)\right)}\\
&=& 1 +
\frac{\cos\left( \frac{\pi}{2}\sigma\right)}{\sin\left(\frac{\pi}{2}\sigma\right)}\cdot 
\frac{\cos\left(\frac{\pi}{2}(2\sigma_0-\sigma-1)\right)}{
\sin\left(\frac{\pi}{2}(2\sigma_0-\sigma-1)\right)}\\
&=& \frac{\cos\left(\frac{\pi}{2}\sigma\right)}{\sin\left(\frac{\pi}{2}\sigma\right)}
\left[ \frac{\sin\left(\frac{\pi}{2}\sigma\right)}{\cos\left(\frac{\pi}{2}\sigma\right)}
+ \frac{\cos\left(\frac{\pi}{2}(2\sigma_0-\sigma-1)\right)}{
\sin\left(\frac{\pi}{2}(2\sigma_0-\sigma-1)\right)}\right]\\
&=& \frac{\cos\left(\frac{\pi}{2}\sigma\right)}{\sin\left(\frac{\pi}{2}\sigma\right)}\cdot 
\frac{\sin\left(\frac{\pi}{2}\sigma\right)\,\sin\left(\frac{\pi}{2}(2\sigma_0-\sigma-1)\right) 
+\cos\left(\frac{\pi}{2}\sigma\right)\,\cos\left(\frac{\pi}{2}(2\sigma_0-\sigma-1)\right)}{
\cos\left(\frac{\pi}{2}\sigma\right)\,\sin\left(\frac{\pi}{2}(2\sigma_0-\sigma-1)\right)}\\
&=& \frac{\cos\left(\frac{\pi}{2}\sigma\right)}{\sin\left(\frac{\pi}{2}\sigma\right)}\cdot 
\frac{\cos\left( \pi(\sigma-\sigma_0) +\frac{\pi}{2} \right)}{
\cos\left(\frac{\pi}{2}\sigma\right)\,\sin\left(\frac{\pi}{2}(2\sigma_0-\sigma-1)\right)}\\
&=& - \frac{\cos\left(\frac{\pi}{2}\sigma\right)}{\sin\left(\frac{\pi}{2}\sigma\right)}\cdot 
\frac{\sin\left( \pi(\sigma-\sigma_0)\right)}{
\cos\left(\frac{\pi}{2}\sigma\right)\,\sin\left(\frac{\pi}{2}(2\sigma_0-\sigma-1)\right)}\\
&=& - \frac{\sin\left( \pi(\sigma-\sigma_0)\right)}{
\sin\left(\frac{\pi}{2}\sigma\right)\, \sin\left(\frac{\pi}{2}(2\sigma_0-\sigma-1)\right)}.
\end{eqnarray*}
Accordingly, up to a dimensional constant,
$$ (-\Delta)^{\sigma_0} (t_+^\sigma) = -C_\sigma\,C_{2\sigma_0-\sigma-1} 
\frac{\sin\left( \pi(\sigma-\sigma_0)\right)}{
\sin\left(\frac{\pi}{2}\sigma\right)\, \sin\left(\frac{\pi}{2}(2\sigma_0-\sigma-1)\right)} 
t^{\sigma-2\sigma_0}.$$ 
So, recalling \eqref{C sigma}, we obtain that, for any $t>0$, 
$$ (-\Delta)^{\sigma_0} (t_+^\sigma) =-4\,\Gamma(1+\sigma)\,\Gamma(2\sigma_0-\sigma)\, 
\sin\left(\pi(\sigma-\sigma_0)\right),$$ 
which shows \eqref{prima112}. 

We finish the proof of Lemma \ref{lemma:trasformata} by noticing that 
\begin{itemize}
\item if $\sigma=\sigma_0$, then $\sin\left(\pi(\sigma-\sigma_0)\right)=0$,   
\item if $\sigma>\sigma_0$, then $\sin\left(\pi(\sigma-\sigma_0)\right)>0$,
\item if $\sigma<\sigma_0$, then $\sin\left(\pi(\sigma-\sigma_0)\right)<0$.
\end{itemize}
This implies the desired result. 
\end{proof}

\end{appendix}

\end{document}